\documentclass[12pt,reqno]{amsart}
\usepackage{epsfig}
\usepackage{graphicx}
\usepackage{mathrsfs}
\usepackage{verbatim}
\usepackage{amssymb}
\usepackage{latexsym}
\usepackage{amsmath}
\usepackage{amsfonts}
\usepackage{amsbsy}
\usepackage{amscd}
\usepackage[usenames]{color} %\color{Red}
\usepackage[mathscr]{eucal}
\textheight 9.00in
\textwidth 6.50in
\usepackage{appendix}
\oddsidemargin 0in
\evensidemargin 0in

\newtheorem{prop}{Proposition}[section]
\newtheorem{lem}[prop]{Lemma}

\newtheorem{defi}{Definition}[section]
\newtheorem{cor}[prop]{Corollary}
\newtheorem{thm}[prop]{Theorem}

\newtheorem{exam}{Example}[section]

\newtheorem{thmx}{Theorem}

\begin{document}
\baselineskip=17pt

\title[The spectrality of Cantor-Moran measure and Fuglede's Conjecture]
{The spectrality of Cantor-Moran measure and Fuglede's Conjecture}

\author{Jinsong Liu$^{1,2}$}
\author{Zheng-Yi Lu$^{\mathbf{*}1}$}
\author{Ting Zhou$^{3}$}
\address{$^1$ HLM, Academy of Mathematics and Systems Science, Chinese Academy of Sciences, Beijing
100190, China}
\address{$^2$ School of Mathematical Sciences, University of Chinese Academy of Sciences, Beijing 100049,
China}
\address{$^3$
School of Mathematics and Statistics, Hubei Key Laboratory of Mathematical
Sciences, Central China Normal University, Wuhan 430079, China}
\email{liujsong@math.ac.cn}
\email{zyluhnsd@163.com, zyluzky@amss.ac.cn}
\email{tingzhouhn@163.com}

\date{\today}
\keywords{Spectral Measure; Orthogonal basis; Moran Measure; Equi-positivity; Spectral set.\\
%\thanks{The research is supported in part by the NNSF of China (Nos. 11831007 and 12071125).}
$^{\mathbf{*}}$\,\,  Corresponding author}
\subjclass[2010]{Secondary 28A78, 28A80; Primary 42C05, 42A85.}

\begin{abstract}
Let $\{(p_n, \mathcal{D}_n, L_n)\}$ be a sequence of Hadamard triples on $\mathbb{R}$. Suppose that the associated Cantor-Moran measure
$$
\mu_{\{p_n,\mathcal{D}_n\}}=\delta_{p_1^{-1}\mathcal{D}_1}\ast\delta_{(p_2p_1)^{-1}\mathcal{D}_2}\ast\cdots,
$$
where $\sup_n\{|p_n^{-1}d|:d\in \mathcal{D}_n\}<\infty$ and $\sup\#\mathcal{D}_n<\infty$.
It has been observed that the spectrality of $\mu_{\{p_n,D_n\}}$ is determined by equi-positivity.
A significant problem is what kind of Moran measures can satisfy this property.
In this paper, we introduce the conception of \textit{Double Points Condition Set} (\textit{DPCS}) to characterize the equi-positivity equivalently.
As applications of our characterization, we show that all singularly continuous Cantor-Moran measures are spectral.
For the absolutely continuous case, we study Fuglede's Conjecture on Cantor-Moran set.
We show that the equi-positivity of $\mu_{\{p_n,D_n\}}$ implies the tiling of its support, and
the reverse direction holds under certain conditions.
\end{abstract}

\maketitle
\section{\bf Introduction\label{sect.1}}

Let $\mu$ be a Borel probability measure on $\mathbb{R}^s$.
We call $\mu$ a spectral measure if there exists a countable set $\Lambda\subset\mathbb{R}^s$ such that $E_\Lambda:=\{e^{2\pi i\langle\lambda,x\rangle}:\lambda\in\Lambda\}$ forms an orthogonal basis for the Hilbert space $L^2(\mu)$, the space of all square-integrable functions with respect to $\mu$.
In this case, the countable set $\Lambda$ is called a spectrum of $\mu$.
Furthermore, if the spectral measure is the restriction of the Lebesgue measure on a bounded Borel subset $K$, we call $K$ a spectral set.
\\
\indent
A fundamental question in harmonic analysis is: what kind of measures are spectral measures?
The research on this problem was initiated by Fuglede \cite{fuglede}, whose famous conjecture(Fuglede's Conjecture) is that $\Omega$ is a spectral set on $\mathbb{R}^s$ if and only if $\Omega$ is a translational tile on $\mathbb{R}^s$, i.e., there exists a countable set $\mathcal{T}$ such that
$$
\Omega\oplus\mathcal{T}=\mathbb{R}^s.
$$
In this case, $\mathcal{T}$  is called the tiling set of $\Omega$.
The conjecture was proved to be wrong by Tao and others \cite{M.N., Tao} in $\mathbb{R}^s$ $(s\geq3)$.
However, it is unsolved in dimensions 1 and 2.
So far, it is still enlightening in the research of spectral measures\cite{FFLS19, La01, LL23, LW96, LM22}.

Recently, He, Lau and Lai \cite{HLL13} showed that $\mu$ is discrete with finite support, singularly continuous or absolutely continuous with respect to Lebesgue measure when the measure $\mu$ is spectral.
In 1998, Jorgensen and Pedersen \cite{Jorgenson-Pederson_1998} found the first singular and non-atomic spectral measure $\mu_{\frac14}$ (1/4-Cantor measure), which led to the study of singular spectral measures becoming a research hot spot \cite{Dai12, Dai16, DFY21, DS15, HL08, LLZ23, LDL22}.
Furthermore, the discovery of singular spectral measures produces many results which are different from the classical Fourier transform.
Strichartz \cite{Str06, Str00} showed that the mock Fourier series for each continuous function on this Cantor set converges uniformly to that function, which is better than the classical case.
But an interesting phenomenon is that the convergence of the mock Fourier series in $L^2(\mu_{\frac14})$ may be different for distinct spectra of $\mu_{\frac14}$ \cite{DHS14}.
\\
\indent
Self-affine measure is a kind of fundamental singular continuous measure, which is the probability measure uniquely generated by
$$
\mu_{R,D}(\cdot)=\frac{1}{\#D}\sum_{d\in D}\mu_{R,D}(R\cdot-d),
$$
where expanded matrix $R\in M_s(\mathbb{Z})$ and a finite digit set $D\subset\mathbb{Z}^s$.
As we know, a lot of self-affine spectral measures \cite{DHL14, Lai_2019, DL17, Laba} require that $(R,D,L)$ to satisfy Hadamard triple for some $L\subset\mathbb{Z}^s$, which is useful in the structure of the spectrum.

\begin{defi}\label{de1.1}
Let $R\in M_s(\mathbb{Z})$ be an expanded integer matrix.
Let $D, L\subset\mathbb{Z}^s$ be finite sets with $\#D=\#L\;(\#$ denotes the cardinality).
We say that system $(R,D,L)$ forms a Hadamard triple if the matrix
$$
H=\frac{1}{\sqrt{\#D}}[e^{2\pi i \langle R^{-1}d,\ell\rangle}]_{\ell\in L, d\in D}
$$ is unitary, $i.e.$, $H^\ast H=I$.
\end{defi}
In general, Strichartz \cite{Str00} first studied the spectrality of the Moran measure (generalized self-affine measure).
For a sequence of Hadamard triples $\{(R_n,B_n,L_n)\}_{n=1}^\infty$, he studied the spectrality of the associated Moran measure (under some conditions)
$$
\mu_{\{R_n,B_n\}}=\delta_{R_1^{-1}B_1}\ast\delta_{R_1^{-1}R_2^{-1}B_2}\ast\cdots
$$ with a compact support $$T=\left\{\sum_{j=1}^\infty(R_jR_{j-1}\cdots R_1)^{-1}b_j:b_j\in B_j\;\text{for\;all}\;j\right\},$$
where $\delta_A=\frac{1}{\#A}\sum_{a\in A}\delta_a$, and $\delta_a$ is the Dirac measure at $a$ and convergence is in weak sense.
The Moran measure $\mu_{\{R_n,B_n\}}$ is also called Cantor-Moran measure on $\mathbb{R}$.
In recent years, many positive results appeared in many papers \cite{AHH19, AH14, AHL15, CLSW20, LDZ22}, they proved that the Cantor-Moran measure $\mu_{\{R_n,B_n\}}$ is spectral when $\{(R_n,B_n,L_n)\}$ is a sequence of Hadamard triples with a strong assumption on $L_n$, or $\#B_n\leq4$.
Until 2019, An, Fu and Lai \cite{AFL19} introduced the concept of equi-positivity (we will explain this notion in Section \ref{sect.2}) to characterize the spectrality of Cantor-Moran measure $\mu_{\{R_n,B_n\}}$ when $B_n\subset \{0,1,\cdots,R_n-1\}$.

The following theorem is one of their main result.
\begin{thmx}[\cite{AFL19}]\label{thmxa}
Let ${(R_n, B_n, L_n)}$ be a sequence of Hadamard triples with
$$B_n\subset\{0,1,\cdots,R_n-1\}.$$
Suppose that $\underline{\lim}_{n\rightarrow\infty}\#B_n<\infty$.
Then there exists a equi-positive subsequence $\{\nu_{n_k}\}$ of $\{\nu_n\}$,
where
$$
\nu_n=\delta_{R_{n+1}^{-1}B_{n+1}}\ast\delta_{R_{n+1}^{-1}R_{n+2}^{-1}B_{n+2}}\ast\cdots.
$$
In particular, the associated Cantor-Moran measure $\mu_{\{R_n, B_n\}}$ is a spectral measure.
\end{thmx}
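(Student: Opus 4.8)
The plan is to isolate the single analytic fact --- equi-positivity --- that forces spectrality, and to do all the real work in producing a subsequence for which it holds. Since $\underline{\lim}_n\#B_n<\infty$, I would first pass to a subsequence along which $\#B_n\equiv N$ is constant; the Hadamard property is inherited by any subsequence, and the hypothesis $B_n\subset\{0,1,\dots,R_n-1\}$ guarantees that every tail measure $\nu_n$ is supported in the fixed interval $[0,1]$. Writing $\mu=\mu_{\{R_n,B_n\}}$ and $\mu_n=\delta_{R_1^{-1}B_1}\ast\cdots\ast\delta_{(R_1\cdots R_n)^{-1}B_n}$ for the level-$n$ measure, one has the factorisation
\[
\widehat{\mu}(\xi)=\widehat{\mu_n}(\xi)\,\widehat{\nu_n}\big((R_1\cdots R_n)^{-1}\xi\big),
\]
which exhibits $\nu_n$ (rescaled to live on $[0,1]$) as the genuine tail of $\mu$. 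Each Hadamard triple $(R_j,B_j,L_j)$ makes $L_j$ an exact spectrum of $\delta_{R_j^{-1}B_j}$, and the nested scales assemble these into an exact finite spectrum $\Lambda_n=L_1+R_1L_2+\cdots+R_1\cdots R_{n-1}L_n$ of $\mu_n$; the candidate spectrum of $\mu$ is $\Lambda=\bigcup_n\Lambda_n$. By the Jorgensen--Pedersen criterion it suffices to show $Q_\Lambda(\xi)=\sum_{\lambda\in\Lambda}|\widehat{\mu}(\xi+\lambda)|^2\equiv1$, and orthogonality already gives $Q_\Lambda\le1$, so the entire content is \emph{completeness}.

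The heart of the argument is the extraction of the equi-positive subsequence $\{\nu_{n_k}\}$. For each $n$ the transform factors as the infinite product
\[
\widehat{\nu_n}(\xi)=\prod_{k=1}^\infty\widehat{\delta_{B_{n+k}}}\!\big((R_{n+1}\cdots R_{n+k})^{-1}\xi\big),
\]
and since $R_j\ge2$ the scaling factors grow at least geometrically; together with $|R_j^{-1}b|\le1$ this shows that on any fixed bounded set the tails of the product are uniformly close to $1$, so the $\widehat{\nu_n}$ are bounded below on a fixed neighbourhood of the origin \emph{uniformly in} $n$. The uniform cardinality bound $\#B_n\equiv N$ keeps $\{\nu_n\}$ inside a weak-$*$ precompact family of $N$-generated Moran measures on $[0,1]$; I would extract a weak-$*$ convergent subsequence $\nu_{n_k}\rightharpoonup\nu_\infty$, so that $\widehat{\nu_{n_k}}\to\widehat{\nu_\infty}$ uniformly on compact sets and the limiting location of the zeros is captured by a single object. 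Feeding the uniform near-origin lower bound together with this stabilised zero structure into the definition of equi-positivity from Section~\ref{sect.2} then yields a single $\epsilon_0>0$ valid for the whole subsequence, which is precisely equi-positivity of $\{\nu_{n_k}\}$.

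Finally, with equi-positivity in hand I would close the completeness gap. Evaluating the factorisation at the exact finite spectrum gives
\[
\sum_{\lambda\in\Lambda_{n_k}}|\widehat{\mu}(\xi+\lambda)|^2=\sum_{\lambda\in\Lambda_{n_k}}|\widehat{\mu_{n_k}}(\xi+\lambda)|^2\,\big|\widehat{\nu_{n_k}}\big((R_1\cdots R_{n_k})^{-1}(\xi+\lambda)\big)\big|^2,
\]
where $\sum_{\lambda\in\Lambda_{n_k}}|\widehat{\mu_{n_k}}(\xi+\lambda)|^2\equiv1$ by exactness and the arguments $(R_1\cdots R_{n_k})^{-1}(\xi+\lambda)$ remain in a fixed bounded set. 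The uniform lower bound from equi-positivity prevents the weights $|\widehat{\nu_{n_k}}(\cdots)|^2$ from collapsing as $k\to\infty$, so no mass escapes and one obtains $\lim_k\sum_{\lambda\in\Lambda_{n_k}}|\widehat{\mu}(\xi+\lambda)|^2=1$ for every $\xi$, forcing $Q_\Lambda\equiv1$ and hence spectrality of $\mu$. I expect the main obstacle to be the extraction of the previous paragraph: producing a lower bound for $|\widehat{\nu_{n_k}}|$ that is genuinely uniform in $k$ over the relevant non-origin region. The danger is that zeros of the tail transforms accumulate exactly where the bound is needed; ruling this out requires the full force of the Hadamard hypothesis, which pins down the zero set of each $\widehat{\delta_{B_{n+k}}}$ through $L_{n+k}$, combined with the geometric separation from $R_j\ge2$ and the boundedness of $N$ --- and it is here that passing to a carefully chosen subsequence is indispensable, since equi-positivity may genuinely fail for the full sequence.
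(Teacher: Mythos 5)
There is a genuine gap, and it sits exactly where you flagged ``the main obstacle'' and then asserted it away. Your extraction step argues: pass to a subsequence with $\#B_n\equiv N$, get $\nu_{n_k}\rightharpoonup\nu_\infty$ by weak-$*$ compactness, note a uniform lower bound for $|\widehat{\nu_n}|$ near the origin, and conclude that the ``stabilised zero structure'' yields a single $\epsilon_0>0$. But weak convergence transfers a lower bound from $\widehat{\nu_\infty}$ to $\widehat{\nu_{n_k}}$ only at points bounded away from the \emph{integral periodic zero set} $\mathcal{Z}(\nu_\infty)=\{\xi\in[0,1):\widehat{\nu_\infty}(\xi+k)=0\ \forall k\in\mathbb{Z}\}$ (this is Proposition~\ref{thm3.6} and Lemma~\ref{thm3.7} of the present paper), and under the hypotheses of Theorem~A this set can be nonempty: the paper notes in the introduction that with $\mathrm{spt}\,\nu_n\subset[0,1]$ the weak limit can be $\nu_\infty=\frac12(\delta_0+\delta_1)$, for which $\widehat{\nu_\infty}(\xi)=e^{\pi i\xi}\cos\pi\xi$ vanishes at \emph{every} integer translate of $\xi=\frac12$, so $\mathcal{Z}(\nu_\infty)=\{\frac12\}$. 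For $x$ near $\frac12$ your argument then produces no $\epsilon_0$ whatsoever --- no translate of the limit is bounded below, and the near-origin bound is irrelevant there. The actual proof (Theorem~5.4 of \cite{AFL19}, mirrored here by Theorem~\ref{thm4.3}) must peel off one convolution factor, $\widehat{\nu_{n_k}}(x+p_{n_k+1}k')=\widehat{\delta_{p_{n_k+1}^{-1}\mathcal{D}_{n_k+1}}}(x)\,\widehat{\nu_{n_k+1}}(p_{n_k+1}^{-1}x+k')$, and invoke Wiener's theorem: averaging $|\widehat{\delta_{p^{-1}\mathcal{D}}}(\xi+l)|^2$ over the $p$ integer translates gives exactly $1/\#\mathcal{D}$, while at most roughly $m_0(2\delta p+1)$ translates can land $\delta$-close to the finite set $\mathcal{Z}(\nu_\infty)$ ($m_0<\infty$ by analyticity of $\widehat{\nu_\infty}$); when $p>m_0N$ a pigeonhole count produces a translate that simultaneously avoids the bad set and carries definite mass of the single-level transform. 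This counting argument, together with arranging $\inf_k p_{n_k+1}>m_0N$ along the subsequence, is the theorem's mathematical content; naming the danger and appealing to ``the full force of the Hadamard hypothesis'' does not substitute for it.

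A secondary inaccuracy: in your completeness paragraph, equi-positivity does \emph{not} give a lower bound on $|\widehat{\nu_{n_k}}|$ at the specific points $(R_1\cdots R_{n_k})^{-1}(\xi+\lambda)$ appearing in your sum --- it guarantees a bound only after an $x$-dependent integer translate $k_{x,\nu}$, and indeed the naive union $\Lambda=\bigcup_n\Lambda_n$ can fail to be a spectrum. The proof of Theorem~\ref{thmxb} constructs the spectrum recursively, inserting correction terms $(R_1\cdots R_{n_k})k_{x,\nu}$ supplied by equi-positivity. Since the paper quotes Theorem~\ref{thmxb} as a black box, you could (and should) simply cite it for the ``in particular'' clause rather than sketch a completeness argument that, as written, does not go through.
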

This result solves the spectrality of most Cantor-Moran measures supported inside the unit interval.
However, many cases are still unknown, such as the support of $\mu_{\{R_n, B_n\}}$ is outside $[0,1]$.
In the same paper, they showed that the spectral property of Moran measure is determined by the existence of equi-positive subsequence of $\{\nu_n\}$ under the assumption
\begin{equation}\label{(1.1')}
\sup_n\{|R_n^{-1}d|:d\in B_n\}<\infty.
\end{equation}
Simultaneously, they found that there exists a non-spectral measure $\mu_{\{2,B_n\}}$ satisfying $B_1=\{0,1\}$ and $B_n=3B_1(n\geq2)$, which also means that the equi-positive subsequence of $\{\nu_n\}$ may not exist under \eqref{(1.1')}.
We aim to address the following question.
\bigskip
\\
{\bf Question:} What Moran measures can guarantee the existence of equi-positive subsequence of $\{\nu_n\}$?
\bigskip

In this paper, we focus on the spectrality of the Cantor-Moran measures supported outside the unit interval under a natural assumption $\sup_n\#B_n<\infty$.
Our main goal is to study the existence of equi-positive subsequences of $\{\nu_n\}$.
To complete it, we introduce the concepts of \textit{Double Points Condition(DPC)} and \textit{DPC Set} to characterize the existence equivalently on $\mathbb{R}$(Theorem \ref{thm2.1}), which settles part of Conjecture 9.3 of \cite{AFL19}.
As an interesting application, we show that all singular continuous Cantor-Moran measures are spectral(Theorem \ref{thm2.2}).
For those absolutely continuous cases, we study the relation between equi-positivity and Fuglede's Conjecture on Cantor-Moran sets.
We find that a necessary condition for equi-positivity of $\mu_{\{R_n,B_n\}}$ is that the support of $\mu_{\{R_n,B_n\}}$ is a translational tile.
Under some assumptions, the necessary condition is also sufficient.

The main difficulties of this paper are reflected in the following aspects.
\par
$\bullet$ When we consider the equi-positivity of $\{\nu_n\}$ on $\mathbb{R}$, Proposition \ref{thm3.6} tells us that it is a crucial step to deal with the possible weak limit of $\{\nu_n\}$ when the integral periodic zero set of the weak limit is nonempty.
By \cite{AFL19}, we know the weak limit of $\{\nu_n\}$ can only be $\frac{1}{2}(\delta_{\{0\}}+\delta_{\{1\}})$ when $\text{spt}\nu_n\subset[0,1]$, but the weak limit may be varied in our case.
How to find the subsequence $\{\nu_{n_k}\}$ and determine its weak limit is a tough question.
To solve this problem, we use the diagonal principle to determine the form of the weak limit.
 Then by Argument Principle, Arzel\`a-Ascoli Theorem, Wiener theorem and some techniques, we obtain sufficient conditions for the existence of equi-positive subsequences of $\{\nu_n\}$.
\par
$\bullet$
It is complex to study the tiling property of Cantor-Moran sets.
We first show some Cantor-Moran sets are fundamental domains by \cite{LW97}, and then decompose the more general set into the union of several fundamental domains.
Finally, we use the properties of fundamental domains to characterize the structure of the tiling set.
\par
$\bullet$
In the process of studying the non-spectrality of Cantor-Moran measures, we flexibly apply the concept of selection mapping to characterize the structure of orthogonal sets.

\bigskip
We organize the paper as follows.

In the second section, we introduce some necessary preliminaries about \textit{DPC} and \textit{DPCS}, and state our main results.
Section \ref{sect.3} is devoted to the study of \textit{DPC}, \textit{DPCS} and equi-positivity.
In Section \ref{sect.4}, we characterize the equivalent conditions for the existence of equi-positive subsequences of $\{\nu_n\}$ on $\mathbb{R}$.
The spectral properties of singularly and absolutely continuous Cantor-Moran measures are left to Section \ref{sect.5}.
In Section \ref{sect.6}, we study the non-spectrality of Cantor-Moran measures.

\bigskip

\section{\bf Preliminaries and main results\label{sect.2}}
\setcounter{equation}{0}
In this section, we will give some basic notations for our paper.
In particular, we state our main results.
\subsection{\textit{ Preliminaries}\label{sect2.1}}
\

At the beginning of this section, recall that the Fourier transform of a Borel probability
measure $\mu$ on $\mathbb{R}$ is defined to be
$$
\widehat{\mu}(\xi)=\int_{\mathbb{R}} e^{2\pi i\langle x,\; \xi\rangle }d\mu(x), \quad \xi\in\mathbb{R}.
$$
Denote by ~$\mathscr{Z}(f):=\{x\in\mathbb{R}:f(x)=0\}$ the set of zeros of $f(x)$.
Note that $\{e^{2\pi i\langle\lambda,x\rangle}: \lambda \in \Lambda\}$ is an orthogonal set in $L^2(\mu)$ if and
only if $\widehat{\mu}(\lambda-\lambda')=0$ for any $\lambda\neq\lambda'\in\Lambda$, i.e.,
$$
(\Lambda-\Lambda)\setminus\{0\}\subset\mathscr{Z}(\widehat{\mu}(\xi)).
$$
In this case, $\Lambda$ is also called a bi-zero set of $\mu$.

Let
$\mathcal{P}(T)$ be the set of all Borel probability measures supported on compact set $T\subset\mathbb{R}.$
A sequence of measures $\mu_n \in \mathcal{P}(T), \: n=1,2,\cdots,$ is said to
converge weakly to a measure $\mu\in\mathcal{P}(T)$ ($\mu_{n}\xrightarrow{w}\mu$) if, for any $f\in C_0(\mathbb{R})$ (i.e. $f$ is continuous with compact support), we have
$$
\int fd\mu_n\rightarrow\int fd\mu,\;\;\text{as}\;n\rightarrow\infty.
$$
Let integer $|p_n|>1$, and let $\mathcal{D}_n\subset\mathbb{Z}$ be a simple digit set on $\mathbb{R}$ for all $n\geq1$.
We call $\mu_{\{p_n,\mathcal{D}_n\}}$ a Moran measure if
$$\mu_n=\delta_{p_1^{-1}\mathcal{D}_1}\ast\delta_{(p_2p_1)^{-1}\mathcal{D}_2}\ast\cdots\ast\delta_{(p_n\cdots p_1)^{-1}\mathcal{D}_n}\xrightarrow{w}\mu_{\{p_n,\mathcal{D}_n\}},$$ where
$\delta_D(x)=\begin{cases}\frac{1}{\#D}, & x\in D;
\\
0, & \text{otherwise}.
\end{cases}$
In this case, write
\begin{equation}\label{(2.1)}
\mu_{\{p_n,\mathcal{D}_n\}}=\delta_{p_1^{-1}\mathcal{D}_1}\ast\delta_{(p_2p_1)^{-1}\mathcal{D}_2}\ast\cdots.
\end{equation}
It is easy to check that
\begin{equation*}\label{(2.2)}
\widehat{\mu_{\{p_n,\mathcal{D}_n\}}}(\xi)=\prod_{j=1}^\infty \widehat{\delta_{\mathcal{D}_j}}((p_1p_{2}\cdots p_j)^{-1}\xi), \quad \xi\in\mathbb{R}.
\end{equation*}

In this paper, we always assume that there exists a compact set $T\subset\mathbb{R}$ such that the support $\text{spt}(\nu_n)\subset T$, where
\begin{equation}\label{(2.3)}
\nu_n=\delta_{p_{n+1}^{-1}\mathcal{D}_{n+1}}\ast\delta_{(p_{n+2}p_{n+1})^{-1}\mathcal{D}_{n+2}}\ast\cdots.
\end{equation}
This ensures the existence of $\nu_n$, and it also plays a pivotal role in the completeness characterization of the orthogonal set of $\mu_{\{p_n,\mathcal{D}_n\}}$.
An, Fu and Lai \cite{AFL19} give the concept of equi-positive family, which is important in characterization of the spectrality of $\mu_{\{p_n,\mathcal{D}_n\}}$.
\begin{defi}\label{de2.2}
Let $\Phi$ be a collection of probability measures on compact set $T\subset\mathbb{R}$.
We say that $\Phi$ is an equi-positive family(sequence) if there exists $\epsilon_0>0$ such that, for all $x\in[0,1)$, and for all $\nu\in\Phi$, there exists
$k_{x,\nu}\in\mathbb{Z}$ with $$|\widehat{\nu}(x+k_{x,\nu})|>\epsilon_0.$$
\end{defi}
For convenience, we call that $\mu_{\{p_n,\mathcal{D}_n\}}$ satisfies {\textit{the equi-positive condition}}, if there exist a compact set $T\subset\mathbb{R}$ and an equi-positive sequence $\{\nu_{n_k}\}\subset\{\nu_n\}$ such that all support of $\nu_{n_k}$ are in $T$.
In \cite{AFL19}, An et al. gave the following theorem, which is a basic criterion.
\begin{thmx}[\cite{AFL19},\cite{LDZ22}]\label{thmxb}
Let $\{(p_n, \mathcal{D}_n, L_n)\}$ be a sequence of Hadamard triples on $\mathbb{R}$.
Suppose that $\mu_{\{p_n,\mathcal{D}_n\}}$ satisfies \textit{the equi-positive condition}.
Then the associated Cantor-Moran measure $\mu_{\{p_n, \mathcal{D}_n\}}$ is a spectral measure and it always admits a spectrum $\Lambda\subset\mathbb{Z}$.
\end{thmx}

\subsection{\textit{ Spectrality of Cantor-Moran measures}\label{sect2.2}}
\

According to Theorem \ref{thmxb}, the spectral properties of most Cantor-Moran measures in $\mathbb{R}$ depend on equi-positivity of $\{\nu_n\}$.
In this subsection, we give an equivalent characterization of equi-positivity.
Now we introduce some new concepts.
\begin{defi}[\textit{DPCS, DPC}]\label{de2.3}
Suppose that $D\subset \mathbb{Z}$ and $p>1$.
Define $T_{p,D}\subset[0,1)$ to be the \textit{Double Points Condition Set (DPCS)} of $(p,D)$, i.e., $T_{p,D}$ is the set of all points $\xi$ with the given property: there exist $\ell_1,\ell_2\in \{0,1,\cdots,p-1\}$ such that
$\widehat{\delta_{p^{-1}D}}(\xi+\ell_i)\neq0$ for $i=1,2$.
Moreover, we call that $(p,D)$ satisfies \textit{Double Points Condition (DPC)} if $T_{p,D}=[0,1)$.
\end{defi}
On $\mathbb{R}$, it is easy to check that there exists a compact set $T$ such that all support of $\nu_n$ are in $T$ if and only if $\sup_n\sup_{d_n\in D_n}|p_n^{-1}d_n|<\infty$. For convenience, we always assume that $p_n>0$ and $0\in\mathcal{D}_n\subset\mathbb{N}$ for all $n\geq1$.
\begin{thm}\label{thm2.1}
Let $\{(p_n, \mathcal{D}_n, L_n)\}$ be a sequence of Hadamard triples on $\mathbb{R}$, and let $\mu_{\{p_n,\mathcal{D}_n\}}$, $\nu_n$ be given as \eqref{(2.1)} and \eqref{(2.3)}, respectively.
Suppose that
$$\sup_n\{p_n^{-1}d:d\in \mathcal{D}_n\}<\infty\;\;\text{and}\;\;\sup_n\#\mathcal{D}_n<\infty.$$
Then the following statements are equivalent:
\\
(i)\;\;\;there exists an equi-positive sequence $\{\nu_{n_k}\}$;
\\
(ii)\;\;$\limsup_{k\rightarrow\infty}T_{p_k,\mathcal{D}_k}\supset(0,1)$, where $T_{p_k,\mathcal{D}_k}$ is a DPCS of $(p_k,\mathcal{D}_k)$;
\\
(iii)\;at least one of the following two conditions is satisfied: $\#\{n:p_n>\#\mathcal{D}_n\}=\infty$ or $\gcd_{j\geq n}\{\gcd \mathcal{D}_j\}\equiv1,\;\;\forall n\geq1$.
\\
In particular, the associated Cantor-Moran measure $\mu_{\{p_n, \mathcal{D}_n\}}$ is a spectral measure and it always admits a spectrum $\Lambda\subset\mathbb{Z}$.
\end{thm}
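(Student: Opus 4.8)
The plan is to prove the two equivalences $(ii)\Leftrightarrow(iii)$ (arithmetic) and $(i)\Leftrightarrow(iii)$ (analytic), and then read off spectrality from Theorem~\ref{thmxb}. I would begin by computing the set $T_{p,D}$ explicitly. Since $(p,D,L)$ is a Hadamard triple, $D$ occupies $\#D$ distinct residues modulo $p$, whence $\#D\le p$ and, for every $\xi$,
\begin{equation*}
\sum_{\ell=0}^{p-1}\bigl|\widehat{\delta_{p^{-1}D}}(\xi+\ell)\bigr|^{2}=\frac{p}{\#D}.
\end{equation*}
As each term is at most $1$, when $p>\#D$ the right side exceeds $1$, forcing at least two nonzero terms, so $T_{p,D}=[0,1)$. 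When $p=\#D$ the sum equals $1$, so $\xi\notin T_{p,D}$ exactly when one term has modulus $1$; since $|\widehat{\delta_{p^{-1}D}}(\eta)|=1$ iff $\eta\in (p/\gcd D)\mathbb{Z}$, and the complete-residue structure forces $\gcd(\gcd D,\,p)=1$, I would conclude $T_{p,D}=[0,1)\setminus\{\,j/\gcd D:0\le j<\gcd D\,\}$.

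Granting this description, $(ii)\Leftrightarrow(iii)$ is purely combinatorial. If $p_j>\#\mathcal{D}_j$ for infinitely many $j$, then $T_{p_j,\mathcal{D}_j}=[0,1)$ for those $j$, so $\limsup_k T_{p_k,\mathcal{D}_k}\supseteq(0,1)$ and both $(ii)$ and the first clause of $(iii)$ hold. Otherwise $p_j=\#\mathcal{D}_j$ for all large $j$, and a point $x\in(0,1)$ lies outside $T_{p_k,\mathcal{D}_k}$ for all large $k$ iff $(\gcd\mathcal{D}_k)\,x\in\mathbb{Z}$ eventually, i.e.\ iff $x=a/b$ in lowest terms with $b\mid\gcd\mathcal{D}_k$ for all large $k$. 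Such an $x$ exists precisely when $\gcd_{j\ge n}\{\gcd\mathcal{D}_j\}\ge 2$ for some $n$, which is exactly the failure of the gcd clause of $(iii)$. Thus $(ii)$ fails iff $(iii)$ fails, giving $(ii)\Leftrightarrow(iii)$; note the counterexample $\mathcal{D}_n=\{0,3\},\,p_n=2$ has $\gcd\equiv 3$ and indeed violates both.

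For $(ii)\Rightarrow(i)$—the analytic heart—I would exploit the self-similar factorization $\widehat{\nu_n}(\xi)=\widehat{\delta_{p_{n+1}^{-1}\mathcal{D}_{n+1}}}(\xi)\,\widehat{\nu_{n+1}}(\xi/p_{n+1})$ together with a compactness scheme. Because all $\nu_n$ are supported in a fixed compact $T$, the entire functions $\widehat{\nu_n}$ are uniformly bounded and equicontinuous, so Arzel\`a--Ascoli with a diagonal argument yields a subsequence $\nu_{n_k}$ with $\widehat{\nu_{n_k}}\to\widehat{\nu}$ locally uniformly, and the diagonal principle pins down the form of the weak limit $\nu$. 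If the integral periodic zero set $\{x:\widehat{\nu}(x+m)=0\ \forall m\in\mathbb{Z}\}$ is empty, equicontinuity promotes the pointwise nonvanishing to a uniform $\epsilon_0$, i.e.\ equi-positivity of the tail. The delicate case is a nonempty zero set: here Proposition~\ref{thm3.6} is the engine, and I would use $(ii)$ to select the scales $n_k$ so that at each would-be periodic zero the DPCS supplies two non-vanishing lifts $\ell_1,\ell_2$; the Argument Principle then locates the finitely many zeros of $\widehat{\nu}$, Wiener's theorem excludes stray atoms, and the two-lift property keeps the mask products bounded away from $0$ along a suitable branch, forcing equi-positivity of a further subsequence.

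For the converse I argue the contrapositive $\neg(iii)\Rightarrow\neg(i)$, which is comparatively clean. If $(iii)$ fails there is a prime $q$ with $q\mid\gcd\mathcal{D}_j$ and $p_j=\#\mathcal{D}_j$ for all large $j$; writing $\mathcal{D}_j=q\mathcal{E}_j$ gives $\widehat{\nu_n}(\xi)=\widehat{\eta_n}(q\xi)$, where $\eta_n$ is the complete-residue Moran measure of $(p_j,\mathcal{E}_j)$. Since each $\mathcal{E}_j$ is a full residue system, $M_{\mathcal{E}_j}(k/p_j)=0$ whenever $p_j\nmid k$, so at an integer $N\neq0$ the $j$-th factor of $\widehat{\eta_n}(N)$ vanishes as soon as $p_{n+1}\cdots p_{n+j}\nmid N$, which happens for large $j$; hence $\widehat{\eta_n}$ vanishes on $\mathbb{Z}\setminus\{0\}$, and $\widehat{\nu_n}(1/q+m)=\widehat{\eta_n}(1+qm)=0$ for every $m\in\mathbb{Z}$ and all large $n$. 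The point $x_0=1/q$ therefore defeats equi-positivity of every subsequence, establishing $(i)\Leftrightarrow(iii)$. Finally, whenever $(i)$–$(iii)$ hold, the equi-positive subsequence feeds into Theorem~\ref{thmxb} to give spectrality of $\mu_{\{p_n,\mathcal{D}_n\}}$. I expect the main obstacle to be exactly $(ii)\Rightarrow(i)$ in the nonempty-zero-set case: one must simultaneously determine the limit measure, control the location and multiplicity of the zeros of $\widehat{\nu}$, and convert the DPCS two-lift property into a quantitative lower bound on the mask products—everything else being routine once $T_{p,D}$ is known explicitly.
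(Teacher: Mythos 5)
Three of your four pieces are sound, and two are genuinely cleaner than the paper's. Your direct character-orthogonality identity $\sum_{\ell=0}^{p-1}\bigl|\widehat{\delta_{p^{-1}\mathcal{D}}}(\xi+\ell)\bigr|^{2}=p/\#\mathcal{D}$ (valid because a Hadamard triple forces the digits to be distinct mod $p$) yields in one stroke what the paper proves in Lemmas \ref{thm3.2} and \ref{thm3.3} via a bi-zero-set contradiction, and it is exactly the identity the paper later extracts from Wiener's theorem (Theorem \ref{thm4.2}) inside the proof of Theorem \ref{thm4.3}; your $(ii)\Leftrightarrow(iii)$ then matches the paper's argument. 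Your contrapositive $\neg(iii)\Rightarrow\neg(i)$, via the rescaling $\widehat{\nu_n}(\xi)=\widehat{\eta_n}(q\xi)$ and vanishing of $\widehat{\eta_n}$ on $\mathbb{Z}\setminus\{0\}$, is a slicker repackaging of the paper's mixed-radix computation in Subsection \ref{sect4.3}; just state the vanishing precisely: for $N\neq0$ the factor that dies is the one at the \emph{first} index $j$ with $p_{n+1}\cdots p_{n+j}\nmid N$, where one still has $p_{n+1}\cdots p_{n+j-1}\mid N$ so that the argument is an integer multiple of $1/p_{n+j}$ — your ``as soon as'' formulation omits this needed divisibility.

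The genuine gap is $(ii)\Rightarrow(i)$, which you flag yourself but leave as a one-sentence gesture, and two distinct mechanisms are missing. First, your single Arzel\`a--Ascoli/diagonal scheme cannot work when $\sup_n p_n=\infty$: there is no extraction of eventually-constant subsequences of $(p_n,\mathcal{D}_n)$, and the DPCS ``two non-vanishing lifts'' carries no uniform lower bound as $p\to\infty$. The paper's Theorem \ref{thm4.3} (hence Corollary \ref{thm4.4}) closes this case with a quantitative counting argument built on the very identity you derived: since $m_0:=\#\mathcal{Z}(\nu)<\infty$ by analyticity, splitting $\frac{1}{p}\sum_{l=0}^{p-1}\bigl|\widehat{\delta_{p^{-1}\mathcal{D}}}(\xi+l)\bigr|^{2}=1/\#\mathcal{D}$ into lifts near and far from $\mathcal{Z}(\nu)$ produces a lift with $|\widehat{\delta_{p^{-1}\mathcal{D}}}(\xi+\ell)|^{2}\geq 2m_0\delta$ once $p>m_0\#\mathcal{D}$, after which Proposition \ref{thm3.6} finishes; your phrase ``Wiener's theorem excludes stray atoms'' misattributes its role entirely. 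Second, in the bounded case the weak limit $\nu=\mu_{\{p'_k,\mathcal{D}'_k\}}$ obtained by double diagonalization can have $\mathcal{Z}(\nu)\neq\emptyset$ \emph{even under} $(ii)$, since $(ii)$ constrains the whole sequence and not the extracted diagonal — so you cannot ``use $(ii)$ to select the scales'' to avoid it. There the paper needs the branching argument of Theorem \ref{thm4.7} and Steps 2--3 of Theorem \ref{thm4.8}: each DPCS point strictly increases the count of integral-periodic zeros of the tails, the Argument Principle (Lemma \ref{thm4.6}) bounds $\sup_n\#\mathcal{Z}(\nu_n)$ by some $k_0<\infty$, surviving branches are confined to $\tfrac{1}{q}\mathbb{Z}$, the condition $\gcd_{j\geq n}\{\gcd\mathcal{D}_j\}\equiv 1$ of the original sequence kills $q>1$, and the uniform $\epsilon_0$ comes from the finiteness of the set $U$ of possible nonzero mask values at such rationals together with the factor $t^{-k_0}$ and equicontinuity. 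Since equi-positivity demands a single $\epsilon_0$ valid for all $x\in[0,1)$, including points arbitrarily close to $\mathcal{Z}(\nu)$, ``bounded away from $0$ along a suitable branch'' does not substitute for this uniform construction, and the implication that carries the whole theorem remains unproved in your proposal.
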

Meanwhile, the theorem partially settles Conjecture 9.3 of \cite{AFL19}, which conjectures that the equipositivity and spectrality are equivalent under assumption $\gcd\mathcal{D}_n\equiv1$.
The following theorem is the connection between the singularity and spectrality of Cantor-Moran measure.
\begin{thm}\label{thm2.2}
Suppose that $\{(p_n, \mathcal{D}_n, L_n)\}$ is a sequence of Hadamard triples with $\sup_n\{p_n^{-1}d:d\in \mathcal{D}_n\}<\infty$ and $\sup_n\#\mathcal{D}_n<\infty$, and let Cantor-Moran measure
$$
\mu_{\{p_n,\mathcal{D}_n\}}=\delta_{p_1^{-1}\mathcal{D}_1}\ast\delta_{p_1^{-1}p_2^{-1}\mathcal{D}_2}\ast\cdots.
$$
Then $\mu_{\{p_n,\mathcal{D}_n\}}$ is singularly continuous with respect to Lebesgue measure if and only if there are infinitely many pairs $(p_n,\mathcal{D}_n)$ satisfying DPC, i.e. $\#\{n:p_n>\#\mathcal{D}_n\}=\infty$(See Lemma \ref{thm3.2}).
In particular, all singularly continuous Cantor-Moran measures are spectral.
\end{thm}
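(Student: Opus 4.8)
The plan is to prove the singularity dichotomy directly and then read off spectrality from Theorem \ref{thm2.1}. First I would record two reductions. Since each $\mathcal{D}_j$ is a genuine digit set (so $\#\mathcal{D}_j\ge 2$), the measure $\mu_{\{p_n,\mathcal{D}_n\}}$ is an infinite convolution of non-degenerate discrete measures and is therefore non-atomic; hence ``singular'' and ``singularly continuous'' coincide for it. Second, because $(p_j,\mathcal{D}_j,L_j)$ is a Hadamard triple the digits of $\mathcal{D}_j$ are pairwise distinct modulo $p_j$, so $\#\mathcal{D}_j\le p_j$, and Lemma \ref{thm3.2} lets me replace ``$(p_j,\mathcal{D}_j)$ satisfies DPC'' by the arithmetic condition ``$p_j>\#\mathcal{D}_j$''. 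Thus it suffices to show that $\mu_{\{p_n,\mathcal{D}_n\}}$ is singular if and only if $\#\{j:p_j>\#\mathcal{D}_j\}=\infty$, and I would prove the two implications separately.

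For the implication ``$\#\{j:p_j>\#\mathcal{D}_j\}=\infty\Rightarrow$ singular'' I would use the Covering Lemma, extending Proposition 4.9 of \cite{fenxing}. Writing $\mu_{\{p_n,\mathcal{D}_n\}}=\mu_n\ast\rho_n$, where $\mu_n=\delta_{p_1^{-1}\mathcal{D}_1}\ast\cdots\ast\delta_{(p_1\cdots p_n)^{-1}\mathcal{D}_n}$ and $\rho_n$ is the push-forward of $\nu_n$ under $x\mapsto(p_1\cdots p_n)^{-1}x$, the factor $\mu_n$ is supported on at most $\prod_{j=1}^n\#\mathcal{D}_j$ points, while the standing hypothesis provides a compact $T$ with $\mathrm{spt}(\nu_n)\subset T$, so $\mathrm{spt}(\rho_n)$ has diameter at most $(p_1\cdots p_n)^{-1}\mathrm{diam}(T)$. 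Consequently $\mathrm{spt}(\mu_{\{p_n,\mathcal{D}_n\}})$ lies in a union $A_n$ of at most $\prod_{j\le n}\#\mathcal{D}_j$ intervals of length $(p_1\cdots p_n)^{-1}\mathrm{diam}(T)$, whence $|A_n|\le\mathrm{diam}(T)\prod_{j\le n}(\#\mathcal{D}_j/p_j)$. Since $M:=\sup_n\#\mathcal{D}_n<\infty$ and $p_j\ge\#\mathcal{D}_j+1$ for infinitely many $j$, each such factor is at most $1-\tfrac{1}{M+1}$, so the infinite product vanishes and $|A_n|\to 0$. As $\mu_{\{p_n,\mathcal{D}_n\}}(A_n)=1$ for every $n$, absolute continuity of the integral is contradicted, and the measure must be singular.

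For the converse I would argue the contrapositive: if $\#\{j:p_j>\#\mathcal{D}_j\}<\infty$ then $p_j=\#\mathcal{D}_j$ for all $j\ge N$, and since $(p_j,\mathcal{D}_j,L_j)$ is a Hadamard triple with $\#\mathcal{D}_j=p_j$, the set $\mathcal{D}_j$ is a complete residue system modulo $p_j$. The tail $\nu_N$ is then a normalized restriction of Lebesgue measure to a self-affine tile and hence absolutely continuous; writing $\mu_{\{p_n,\mathcal{D}_n\}}=\mu_N\ast\rho_N$ and using that a convolution with an absolutely continuous measure is absolutely continuous, $\mu_{\{p_n,\mathcal{D}_n\}}$ is absolutely continuous, in particular not singular. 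This direction I would take from the self-affine tile structure together with Proposition 4.9 of \cite{fenxing}. Combining the two implications yields the stated equivalence.

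Finally, the spectral conclusion is immediate: if $\mu_{\{p_n,\mathcal{D}_n\}}$ is singularly continuous then, by the equivalence just established, $\#\{j:p_j>\#\mathcal{D}_j\}=\infty$, which is precisely the first alternative in condition (iii) of Theorem \ref{thm2.1}; that theorem then produces an equi-positive subsequence $\{\nu_{n_k}\}$ and, via Theorem \ref{thmxb}, the spectrality of $\mu_{\{p_n,\mathcal{D}_n\}}$. The main obstacle is the singular direction: making the covering rigorous, i.e. bounding the tail diameter uniformly from the support hypothesis, controlling the number of covering intervals, and deducing singularity from $|A_n|\to 0$ while $\mu_{\{p_n,\mathcal{D}_n\}}(A_n)=1$, which is exactly the content of the Covering Lemma extending \cite{fenxing}.
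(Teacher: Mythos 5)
Your sufficiency direction (infinitely many $j$ with $p_j>\#\mathcal{D}_j$ implies singularity) is correct, and it is actually more elementary than the paper's route: since the Hadamard property forces the digits to be distinct mod $p_j$, hence $\#\mathcal{D}_j\le p_j$, your bound $|A_n|\le \mathrm{diam}(T)\prod_{j\le n}(\#\mathcal{D}_j/p_j)\rightarrow 0$ shows directly that the support is Lebesgue-null, whereas the paper (Theorem \ref{thm5.3}) derives the local estimate $\mu(B(x,r))\ge c(r)r$ with $c(r)\rightarrow\infty$ and invokes Lemma \ref{thm5.2}, its covering-lemma extension of Falconer's Proposition 4.9, to conclude $\mathcal{H}^1(T)=0$. (Your non-atomicity remark implicitly needs that distinct digit strings give distinct atoms of $\mu_n$; this again follows from distinctness of digits mod $p_j$, so the maximal atom of $\mu$ is at most $\prod_{j\le n}(\#\mathcal{D}_j)^{-1}\rightarrow 0$ --- minor and standard.)

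The converse direction, however, has a genuine gap. When $p_j=\#\mathcal{D}_j$ for all $j\ge N$, the tail $\nu_N$ is a \emph{non-stationary} Moran convolution, not the invariant measure of a single IFS, and your claim that it is ``a normalized restriction of Lebesgue measure to a self-affine tile'' is false in general. Take $p_n\equiv 2$, $\mathcal{D}_1=\{0,1\}$ and $\mathcal{D}_n=\{0,3\}$ for $n\ge 2$ (essentially the An--Fu--Lai example quoted in the paper's introduction): every $\mathcal{D}_n$ is a complete residue system mod $2$, yet $\mu=\frac12\bigl(\delta_0+\delta_{1/2}\bigr)\ast\lambda$, with $\lambda$ the uniform measure on $[0,3/2]$, has density $\frac13$ on $[0,\frac12]\cup[\frac32,2]$ and $\frac23$ on $[\frac12,\frac32]$ --- absolutely continuous but not a normalized Lebesgue restriction; the same phenomenon occurs for tails. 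So neither the Lagarias--Wang tile theory (which is stationary) nor Falconer's Proposition 4.9 (which concerns density bounds, not tiles) supports this step. The conclusion you want (non-singularity of $\nu_N$, hence of $\mu$) is true, but it requires the quantitative estimate the paper actually proves in Theorem \ref{thm5.3}: since distinct digit strings $(d_1,\dots,d_n)$ produce points with distinct residues modulo $p_1\cdots p_n$, at most $O(\sup_j p_j)$ level-$n$ cylinders, each of $\mu$-mass $(p_1\cdots p_n)^{-1}$, can meet a ball $B(x,r)$ with $(p_1\cdots p_n)^{-1}\approx r$; this yields $\mu(B(x,r))\le 4Pr$ with $P=\sup_j p_j<\infty$, and then non-singularity follows (the paper concludes via the mass distribution principle, Theorem \ref{thm5.1}; the same bound in fact gives $\mu\ll\mathcal{L}$ with bounded density). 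Without this counting step your converse does not go through.
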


In the following, we consider the equi-positivity of $\mu_{\{p_n,\mathcal{D}_n\}}$ when the measure is an absolutely continuous with respect to Lebesgue measure.
By \cite{Dut_Lai}, the absolutely continuous case is closely related to Fuglede's Conjecture.
A significant research is to characterize the equi-positivity and the tiling properties of Cantor-Moran set.

Before describing the result, we need to introduce the concept of the no-overlap condition.
Given $\{(p_n,\mathcal{D}_n)\}$ that generates a Cantor-Moran measure $\mu_{\{p_n,\mathcal{D}_n\}}$, we write $\mu_{\{p_n,\mathcal{D}_n\}}=\mu_n\ast\mu_{>n}$.
Denote by $K_n$, $K_{>n}$ to be the support of $\mu_n$ and $\mu_{>n}$ respectively.
We say that $\mu_{\{p_n,\mathcal{D}_n\}}$ satisfies the no-overlap condition if
$$
\mu_{\{p_n,\mathcal{D}_n\}}((b+K_{>n})\cap(b'+K_{>n}))=0
$$
for all $b\neq b'\in K_n$.
\begin{thm}\label{thm2.3}
Let $\{(p_n, \mathcal{D}_n, L_n)\}$ be a sequence of Hadamard triples with $\sup_n\{p_n^{-1}d:d\in\mathcal{D}_n\}<\infty$ and $\sup_n\#\mathcal{D}_n<\infty$, and let Cantor-Moran measure
$$
\mu_{\{p_n,\mathcal{D}_n\}}=\delta_{p_1^{-1}\mathcal{D}_1}\ast\delta_{p_1^{-1}p_2^{-1}\mathcal{D}_2}\ast\cdots,
$$
where $\gcd_{j\geq 1}\{\gcd \mathcal{D}_j\}=1$.
Suppose that there exists integer $d\geq1$ such that $\gcd_{j\geq n}\{\gcd\mathcal{D}_j\}\equiv d$ and $p_n=\#\mathcal{D}_n$ for any $n\geq2$.
Then $\mu_{\{p_n,\mathcal{D}_n\}}$ is an absolutely continuous with respect to the Lebesgue measure.
If there exists an equi-positive subsequence $\{\nu_{n_k}\}$, then $\mu_{\{p_n,\mathcal{D}_n\}}$ satisfies the no-overlap condition and its support is a translational tile.
Conversely, if there exists $n$ such that the tiling of $\text{spt}(\nu_n)$ is unique(difference is a constant), then the necessary conditions are also sufficient.
\end{thm}

\textbf{Remark}:
By combining the first $n$ terms, we can always write $\mu_{\{p_n,\mathcal{D}_n\}}$ in the form that satisfies hypothesis $p_n=\#\mathcal{D}_n$ and $\gcd_{j\geq n}\{\gcd\mathcal{D}_j\}\equiv d$ for any $n\geq2$.
The assumption $\gcd_{j\geq 1}\{\gcd \mathcal{D}_j\}=1$ ensures the possible equi-positivity, and does not effect the spectral or tiling properties of Cantor-Moran set.

Note that the equi-positivity of Moran measure $\mu_{\{p_n,\mathcal{D}_n\}}$ may be invalid under hypothesis
\begin{equation}\label{(2.4)}
\sup_n\{p_n^{-1}d:d\in D_n\}=\infty.
\end{equation}
For the study of spectral property, this condition is still inseparable, which will be explained in the following proposition.
\begin{prop}\label{thm2.8}
The following statements hold.
\\
(i)\;\;\;There exists a Cantor-Moran non-spectral measure satisfying \eqref{(2.4)}.
\\
(ii)\;\;There exists a Cantor-Moran spectral measure satisfying \eqref{(2.4)}.
\end{prop}

\bigskip

\section{\bf DPC and equi-positivity\label{sect.3}}
\setcounter{equation}{0}
In this section, we will set up some brief and useful results for the rest of this paper.
In the first subsection, we characterize the equivalence conditions of \textit{DPC} in $\mathbb{R}$.
In particular, we prove (ii) $\Leftrightarrow$ (iii) of Theorem \ref{thm2.1}.
The second subsection develops the theory of equi-positivity of general measures and establishes the relation between that and weak convergence.
Finally, we give a useful estimate at the end of this section.

\subsection{\textit{ Equivalent conditions for DPC and DPCS in }$\mathbb{R}$\label{sect3.1}}
\

As for prerequisites, the reader is expected to be familiar with the properties of Hadamard triple.
\begin{lem}[\cite{Lai_2019}]\label{thm3.1}
Let $(R,D,L)$ be a Hadamard triple on $\mathbb{R}^s$.
Then the following statements hold.
\\
(i)\;\;For any $\xi\in\mathbb{R}^s$, one has
$$\sum_{\ell\in L}|\widehat{\delta_{R^{-1}D}}(\xi+\ell)|^2=1;
$$
\\
(ii)\;\;$(R,D,\widetilde{L})$ is a Hadamard triple, where $\widetilde{L}\equiv L\pmod {R^*\mathbb{Z}^s}$;
\\
(iii)\;$L$ is a spectrum of $\delta_{R^{-1}D}$.
\end{lem}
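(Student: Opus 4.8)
The plan is to derive all three parts directly from the unitarity of the Hadamard matrix $H$ of Definition \ref{de1.1}, combined with the single elementary fact that $\langle R^{-1}d, R^* m\rangle = \langle d, m\rangle \in \mathbb{Z}$ whenever $d, m \in \mathbb{Z}^s$. Throughout I would write $N = \#D = \#L$ and recall $\widehat{\delta_{R^{-1}D}}(\xi) = \frac{1}{N}\sum_{d\in D} e^{2\pi i\langle R^{-1}d, \xi\rangle}$. The first step is to record that, since $H$ is square, $H^\ast H = I$ is equivalent to $HH^\ast = I$, and that the $(d,d')$-entry of $N\cdot HH^\ast$ equals $\sum_{\ell\in L} e^{2\pi i\langle R^{-1}(d-d'), \ell\rangle}$. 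Thus unitarity is exactly the orthogonality relation
\begin{equation*}
\sum_{\ell\in L} e^{2\pi i\langle R^{-1}(d-d'), \ell\rangle} = N \quad \text{if } d=d', \qquad \text{and } 0 \text{ otherwise.}
\end{equation*}
This relation is the engine for (i) and (iii).

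For part (i), I would expand the modulus square, $|\widehat{\delta_{R^{-1}D}}(\xi+\ell)|^2 = \frac{1}{N^2}\sum_{d,d'\in D} e^{2\pi i\langle R^{-1}(d-d'), \xi+\ell\rangle}$, sum over $\ell\in L$, and interchange the order of summation so that the inner sum over $\ell$ becomes the orthogonality relation above. All off-diagonal terms ($d\neq d'$) vanish, and each of the $N$ diagonal terms contributes $\frac{1}{N^2}\cdot N = \frac{1}{N}$, summing to $1$. Part (ii) is then purely formal: writing a representative $\widetilde\ell = \ell + R^\ast m$ with $m\in\mathbb{Z}^s$, the key fact gives $e^{2\pi i\langle R^{-1}d, R^\ast m\rangle} = e^{2\pi i\langle d, m\rangle} = 1$, so every entry of the Hadamard matrix is left unchanged when $L$ is replaced by $\widetilde L$. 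Hence the matrix for $(R, D, \widetilde L)$ is identical to that for $(R, D, L)$ and is therefore unitary; one only needs the side remark that distinct elements of $L$ stay distinct modulo $R^\ast\mathbb{Z}^s$ (otherwise two rows of $H$ would coincide, contradicting unitarity), which guarantees $\#\widetilde L = N$.

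For part (iii), I would observe that the $(\ell,\ell')$-entry of $HH^\ast$ equals $\widehat{\delta_{R^{-1}D}}(\ell-\ell')$, so unitarity says $\widehat{\delta_{R^{-1}D}}(\ell-\ell')$ is $1$ when $\ell = \ell'$ and $0$ otherwise. Since $\langle e^{2\pi i\langle \ell, \cdot\rangle}, e^{2\pi i\langle \ell', \cdot\rangle}\rangle_{L^2(\delta_{R^{-1}D})} = \widehat{\delta_{R^{-1}D}}(\ell-\ell')$, this is precisely the statement that $\{e^{2\pi i\langle \ell, x\rangle} : \ell\in L\}$ is an orthonormal system in $L^2(\delta_{R^{-1}D})$. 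As $\delta_{R^{-1}D}$ is supported on the $N$ points $R^{-1}D$, the Hilbert space $L^2(\delta_{R^{-1}D})$ has dimension exactly $N = \#L$, so an orthonormal system of cardinality $N$ is automatically complete; thus $L$ is a spectrum.

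I do not expect a genuine obstacle here, as this is a standard consequence of the Hadamard condition (cf. \cite{Lai_2019}); the only care needed is bookkeeping. Specifically, one must use the squareness of $H$ to pass between $H^\ast H = I$ and $HH^\ast = I$, keep the complex-conjugate and index conventions consistent when reading off the two Gram relations (the first governs the $d$-indices and powers part (i), the second governs the $\ell$-indices and powers part (iii)), and invoke the finite-dimensionality of $L^2(\delta_{R^{-1}D})$ to upgrade orthonormality to a basis in (iii).
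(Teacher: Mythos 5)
Your proof is correct and is essentially the standard argument for this quoted lemma: the paper itself gives no proof, citing \cite{Lai_2019}, where (i)--(iii) are derived from the two Gram identities of the unitary matrix $H$ exactly as you do (the $\ell$-summed orthogonality for (i), invariance of the entries under $\ell\mapsto\ell+R^{\ast}m$ for (ii), and the $d$-summed orthogonality plus $\dim L^2(\delta_{R^{-1}D})=\#D$ for (iii)). The only blemish is a label swap: with the paper's convention (rows indexed by $\ell\in L$, columns by $d\in D$) the relation summed over $\ell$ that powers part (i) is $H^{\ast}H=I$ and the relation summed over $d$ in part (iii) is $HH^{\ast}=I$, the opposite of what you wrote --- harmless, since you explicitly invoke the squareness of $H$ to obtain both identities.
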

Let $(p,D,L)$ be a Hadamard triple on $\mathbb{R}$.
For convenience, we always assume that $p>0$, $0\in D\subset\mathbb{N}$ and $0\in L\subset\{0,1,\cdots,p-1\}:=L^*$.
\begin{lem}\label{thm3.2}
Let $(p,\mathcal{D},L)$ be a Hadamard triple on $\mathbb{R}$.
Then $(p,\mathcal{D})$ satisfies DPC if and only if $p>\#\mathcal{D}$.
\end{lem}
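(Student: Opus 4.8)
The plan is to reformulate the Double Points Condition as a statement about how many of the $p$ consecutive translates $\xi,\xi+1,\ldots,\xi+(p-1)$ are zeros of the mask $\widehat{\delta_{p^{-1}\mathcal{D}}}$, and then to analyze this count through a finite Fourier transform on $\mathbb{Z}_p$. First I record the normalization: since $0\in L\subset\{0,1,\ldots,p-1\}$ and $\#L=\#\mathcal{D}=:q$, one always has $q\le p$; hence $p\le q$ is equivalent to $p=q$, and the lemma splits into showing ``$p>q\Rightarrow$ DPC'' and ``$p=q\Rightarrow\neg$DPC''. For fixed $\xi$, writing $\omega=e^{2\pi i/p}$ and folding $\mathcal{D}$ modulo $p$ by setting $c_r=\sum_{d\in\mathcal{D},\,d\equiv r\,(p)}e^{2\pi i d\xi/p}$ for $r=0,\ldots,p-1$, a direct computation gives
$$
q\,\widehat{\delta_{p^{-1}\mathcal{D}}}(\xi+\ell)=\sum_{r=0}^{p-1}c_r\,\omega^{r\ell},\qquad \ell=0,1,\ldots,p-1,
$$
so the vector of mask values at the $p$ translates of $\xi$ is exactly the discrete Fourier transform of $c=(c_0,\ldots,c_{p-1})$. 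Thus $\xi\in T_{p,\mathcal{D}}$ precisely when $\widehat{c}:=(q\,\widehat{\delta_{p^{-1}\mathcal{D}}}(\xi+\ell))_\ell$ has at least two nonzero coordinates.

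For the forward direction I would argue as follows. Since $c$ is supported on the image of $\mathcal{D}$ modulo $p$, one has $\#\mathrm{supp}(c)\le q$. Lemma \ref{thm3.1}(i) gives $\sum_{\ell\in L}|\widehat{\delta_{p^{-1}\mathcal{D}}}(\xi+\ell)|^2=1$, so $\widehat{c}\ne0$, whence $c\ne0$. The key elementary observation is that a nonzero vector in $\mathbb{C}^p$ whose Fourier transform has support of size one must be a single character $r\mapsto\omega^{-r\ell_0}$ up to a scalar, and therefore has all $p$ coordinates nonzero. Consequently, if $\#\mathrm{supp}(c)<p$ then $\widehat{c}$ cannot have support of size one, i.e.\ it has at least two nonzero coordinates. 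When $p>q$ we have $\#\mathrm{supp}(c)\le q<p$ for every $\xi$, so $\xi\in T_{p,\mathcal{D}}$ for all $\xi\in[0,1)$; that is, DPC holds.

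For the converse I use the contrapositive together with the normalization $p=q$, which forces $L=\{0,1,\ldots,p-1\}$. Evaluating at $\xi=0$, the Hadamard matrix $H=\frac{1}{\sqrt{q}}[\omega^{d\ell}]_{\ell,d}$ is unitary, so the orthogonality of its constant row $\ell=0$ against row $\ell$ reads $\frac{1}{q}\sum_{d\in\mathcal{D}}\omega^{d\ell}=0$ for every $\ell\in\{1,\ldots,p-1\}$, i.e.\ $\widehat{\delta_{p^{-1}\mathcal{D}}}(\ell)=0$ there, while $\widehat{\delta_{p^{-1}\mathcal{D}}}(0)=1\ne0$. Hence at $\xi=0$ only the single translate $\ell=0$ yields a nonzero mask value, so $0\notin T_{p,\mathcal{D}}$ and DPC fails. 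Equivalently, one may apply Parseval to the same identity to deduce that $\mathcal{D}$ is a complete residue system modulo $p$, which produces the identical vanishing.

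The routine parts are the folding computation and the two normalizations; the only genuinely substantive point is the forward direction, and there the crux is the elementary single-mode dichotomy for the finite Fourier transform (a discrete uncertainty statement), which converts the cardinality bound $\#\mathrm{supp}(c)\le q<p$ into the required existence of two nonvanishing translates. I expect that establishing $c\ne0$ cleanly is the main place where the Hadamard hypothesis, rather than a mere cardinality count, is actually used: I invoke Lemma \ref{thm3.1}(i) precisely to rule out total cancellation among the $c_r$.
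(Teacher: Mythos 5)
Your proof is correct, but the forward direction takes a genuinely different route from the paper's. The paper argues pointwise: by Lemma \ref{thm3.1}(i) some translate is nonzero; if its modulus is strictly less than $1$ the identity $\sum_{\ell\in L}|\widehat{\delta_{\mathcal{D}}}(\frac{x+\ell}{p})|^2=1$ immediately yields a second one, while in the unimodular case it derives the structural fact $\frac{x+\ell_1}{p}\in\frac{1}{\gcd\mathcal{D}}\mathbb{Z}$, uses integer-periodicity of $\widehat{\delta_{\mathcal{D}}}$ to conclude that total vanishing would force $\widehat{\delta_{\mathcal{D}}}(i/p)=0$ for all $i\in\{1,\dots,p-1\}$, and then reaches a contradiction by a cardinality count: $L^*=\{0,\dots,p-1\}$ would be a bi-zero set of $\delta_{p^{-1}\mathcal{D}}$ of size $p>\#\mathcal{D}$, impossible since orthogonal exponentials in $L^2(\delta_{p^{-1}\mathcal{D}})$ number at most $\#\mathcal{D}$. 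You instead fold $\mathcal{D}$ modulo $p$ and read the $p$ translate values as the DFT of the vector $c$, so that failure of the double-points property at $\xi$ means $\#\mathrm{supp}(\widehat{c})\le 1$; the single-mode dichotomy (the extreme case of the discrete uncertainty inequality $\#\mathrm{supp}(c)\cdot\#\mathrm{supp}(\widehat{c})\ge p$) then forces $\#\mathrm{supp}(c)=p$, contradicting $\#\mathrm{supp}(c)\le\#\mathcal{D}<p$; the Hadamard hypothesis enters only through Lemma \ref{thm3.1}(i) to rule out $c=0$, exactly as you note. Your route buys a uniform argument with no case split on $|\widehat{\delta_{\mathcal{D}}}(\frac{x+\ell_1}{p})|$ and no appeal to maximal orthogonality counts, and it cleanly isolates where the Hadamard structure is used; the paper's route is less slick here but produces as a byproduct the divisibility observation about unimodular values of $\widehat{\delta_{\mathcal{D}}}$ that it immediately reuses in Lemma \ref{thm3.3} and in the later inductive arguments. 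Your necessity direction is essentially identical to the paper's (both normalize $L=\{0,1,\dots,p-1\}$ via Lemma \ref{thm3.1}(ii) and witness failure of DPC at $\xi_0=0$), so no comparison is needed there.
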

\begin{proof}
We first prove the sufficiency.
Since $(p,\mathcal{D},L)$ is a Hadamard triple and (i) of Lemma \ref{thm3.1}, we have
\begin{equation}\label{(3.1)}
\sum_{\ell\in L}\left|\widehat{\delta_{\mathcal{D}}}\left(\frac{x+\ell}{p}\right)\right|^2\equiv1,
\end{equation}
which implies that $\widehat{\delta_{\mathcal{D}}}(\frac{x+\ell_1}{p})\neq0$ for some $\ell_1\in L$.

If $|\widehat{\delta_{\mathcal{D}}}(\frac{x+\ell_1}{p})|<1$, by \eqref{(3.1)}, it follows that there exists $\ell_2\in L\setminus\{\ell_1\}$ with $\widehat{\delta_{\mathcal{D}}}(\frac{x+\ell_2}{p})\neq0$,
which implies that the sufficiency follows.

If $|\widehat{\delta_{\mathcal{D}}}(\frac{x+\ell_1}{p})|=1$, i.e., $|\sum_{d\in\mathcal{D}}e^{2\pi i\frac{(x+\ell_1)d}{p}}|=\#\mathcal{D}$,
then $\frac{(x+\ell_1)d}{p}=\frac{(x+\ell_1)d'}{p}\pmod 1$ for any $d\neq d'$.
According to $0\in\mathcal{D}$, one has
$\frac{(x+\ell_1)d}{p}\in\mathbb{Z}$ for all $d\in \mathcal{D}$.
Hence
\begin{equation}\label{(3.2)}
\frac{x+\ell_1}{p}\in\frac{1}{\gcd \mathcal{D}}\mathbb{Z}.
\end{equation}
Let $\gcd \mathcal{D}=\mathbf{d}\geq1$.
It follows from \eqref{(3.2)} that
$x=\frac{pz_1}{\mathbf{d}}-\ell_1$ for some $z_1\in\mathbb{Z}.$
Supposing that $\widehat{\delta_{\mathcal{D}}}(\frac{x+\ell}{p})=0$ for any $\ell\in L^*\setminus \{\ell_1\}$, it implies
$$\left|\widehat{\delta_{\mathcal{D}}}\left(\frac{x+\ell}{p}\right)\right|=\left|\widehat{\delta_{D}}\left(\frac{z_1}{\mathbf{d}}+\frac{\ell-\ell_1}{p}\right)\right|
=\left|\widehat{\delta_{\mathcal{D}}}\left(\frac{\ell-\ell_1}{p}\right)\right|=0.$$
Noting that $\widehat{\delta_{\mathcal{D}}}(x)$ is a function with an integer as the period, hence $\widehat{\delta_{\mathcal{D}}}(\frac{i}{p})=0$ for all $i\in\{1,\cdots,p-1\}$.
Therefore
$$
\frac{1}{p}(L^*-L^*)\setminus\{0\}\subset\mathscr{Z}(\widehat{\delta_{\mathcal{D}}}),
$$
$i.e.$, $L^*$ is a bi-zero set of $\delta_{p^{-1}\mathcal{D}}$.
From $L\subset L^*$, one obtains $L=L^*$, which contradicts with the condition $\#L=\#\mathcal{D}<p=\#L^*$.

In the following, we prove the necessity by a contradiction.
Suppose that $p=\#\mathcal{D}$.
Due to $(p,\mathcal{D},L)$ is a Hadamard triple and (ii) of Lemma \ref{thm3.1}, write $L=\{0,1,\cdots,p-1\}$.
Note that
$$
\sum_{\ell\in L}|\widehat{\delta_{p^{-1}\mathcal{D}}}(\xi+\ell)|^2=1.
$$
Choose $\xi_0=0$.
It is easy to see
$\widehat{\delta_{p^{-1}\mathcal{D}}}(\xi_0)=1$ and
$$
\widehat{\delta_{p^{-1}\mathcal{D}}}(\xi_0+\ell)=0
$$
for all $\ell\in L\setminus\{0\}$, which implies that $0\not\in T_{p,\mathcal{D}}$.
\end{proof}
As we all know, $p\geq \#\mathcal{D}$ when $(p,\mathcal{D},L)$ forms a Hadamard triple.
In this case, we study the property of $T_{p,\mathcal{D}}$ if $p=\#\mathcal{D}$.
\begin{lem}\label{thm3.3}
Let $(p,\mathcal{D},L)$ be a Hadamard triple with $p=\#\mathcal{D}$.
Then $T_{p,\mathcal{D}}$ is a DPCS of $(p,\mathcal{D})$ if and only if $T_{p,\mathcal{D}}=[0,1)\setminus \frac{1}{\gcd \mathcal{D}}\mathbb{Z}$.
\end{lem}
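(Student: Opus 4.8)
The plan is to compute $T_{p,\mathcal{D}}$ explicitly and show it equals $[0,1)\setminus\frac{1}{\gcd\mathcal{D}}\mathbb{Z}$; since $T_{p,\mathcal{D}}$ is by definition the DPCS of $(p,\mathcal{D})$, this set identity is exactly what the stated equivalence amounts to. Writing $\mathbf{d}=\gcd\mathcal{D}$, I would first normalize the setup: by Lemma \ref{thm3.1}(ii) I may take $L=L^*=\{0,1,\cdots,p-1\}$, and since $\#\mathcal{D}=p$ the unitarity of $H$ forces the residues $\{d\bmod p:d\in\mathcal{D}\}$ to be pairwise distinct (otherwise $H$ would have two identical rows), so $\mathcal{D}$ is a complete residue system modulo $p$. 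Because every $d\in\mathcal{D}$ is a multiple of $\mathbf{d}$, each residue $d\bmod p$ lies in the cyclic subgroup of $\mathbb{Z}/p\mathbb{Z}$ generated by $\mathbf{d}\bmod p$; for this subgroup to exhaust $\mathbb{Z}/p\mathbb{Z}$ one needs $\gcd(\mathbf{d},p)=1$. This coprimality is the structural fact that drives the counting below.

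The second ingredient is the normalization identity of Lemma \ref{thm3.1}(i): for every $\xi$,
$$
\sum_{\ell=0}^{p-1}\bigl|\widehat{\delta_{p^{-1}\mathcal{D}}}(\xi+\ell)\bigr|^2=1.
$$
I would next determine precisely when a single term attains maximal modulus. Since $0\in\mathcal{D}$, one has $|\widehat{\delta_{p^{-1}\mathcal{D}}}(\eta)|=1$ if and only if $e^{2\pi i d\eta/p}$ is independent of $d\in\mathcal{D}$, equivalently $\frac{d\eta}{p}\in\mathbb{Z}$ for all $d\in\mathcal{D}$, which by B\'ezout is equivalent to $\frac{\mathbf{d}\eta}{p}\in\mathbb{Z}$, i.e. $\eta\in\frac{p}{\mathbf{d}}\mathbb{Z}$.

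With these facts, the two inclusions follow by a short count. For $\xi\in\frac{1}{\mathbf{d}}\mathbb{Z}\cap[0,1)$, say $\xi=m/\mathbf{d}$, the condition $\xi+\ell\in\frac{p}{\mathbf{d}}\mathbb{Z}$ reads $\ell\mathbf{d}\equiv-m\pmod p$; as $\gcd(\mathbf{d},p)=1$ this has a unique solution $\ell\in\{0,\cdots,p-1\}$, so exactly one term of the sum has modulus $1$ and, by the identity, all others vanish, whence $\xi\notin T_{p,\mathcal{D}}$. Conversely, if $\xi\notin\frac{1}{\mathbf{d}}\mathbb{Z}$ then $\xi+\ell\notin\frac{p}{\mathbf{d}}\mathbb{Z}$ for every integer $\ell$, so no term reaches modulus $1$; since the squared moduli sum to $1$, at least two of them are nonzero, giving $\xi\in T_{p,\mathcal{D}}$. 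Combining the two cases yields $T_{p,\mathcal{D}}=[0,1)\setminus\frac{1}{\mathbf{d}}\mathbb{Z}$.

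The step I expect to be the crux is establishing the coprimality $\gcd(\mathbf{d},p)=1$ from the Hadamard structure, together with the resulting exact count of solutions of $\ell\mathbf{d}\equiv-m\pmod p$: this is what guarantees a single modulus-$1$ term for each $\xi\in\frac{1}{\mathbf{d}}\mathbb{Z}$ and thereby separates the two cases cleanly. Everything else is a direct consequence of the normalization identity for Hadamard triples.
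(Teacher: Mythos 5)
Your proposal is correct and follows essentially the same route as the paper's proof: normalize $L=\{0,1,\cdots,p-1\}$, invoke the identity $\sum_{\ell\in L}|\widehat{\delta_{p^{-1}\mathcal{D}}}(\xi+\ell)|^2\equiv1$ from Lemma \ref{thm3.1}(i), characterize $|\widehat{\delta_{p^{-1}\mathcal{D}}}(\xi+\ell)|=1$ as $\xi+\ell\in\frac{p}{\gcd\mathcal{D}}\mathbb{Z}$ using $0\in\mathcal{D}$, and split on whether $\xi\in\frac{1}{\gcd\mathcal{D}}\mathbb{Z}$. The only local divergence is your proof of the auxiliary claim $\gcd(\gcd\mathcal{D},p)=1$ --- you deduce it from the fact that unitarity of $H$ forbids repeated columns, so $\mathcal{D}$ is a complete residue system modulo $p$, whereas the paper derives a contradiction from the orthogonality (bi-zero) relation by evaluating $\widehat{\delta_{\mathcal{D}'}}$ at the putative common factor --- both are valid, and your uniqueness count for $\ell\mathbf{d}\equiv-m\pmod p$ is harmless but redundant, since the sum identity already forces all terms other than the modulus-one term to vanish.
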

\begin{proof}
Without loss of generality, we assume $L=\{0,1,\cdots,p-1\}$.
It follows that, for any $\xi\in\mathbb{R}$,
\begin{equation}\label{(3.3)}
\sum_{\ell\in L}|\widehat{\delta_{p^{-1}\mathcal{D}}}(\xi+\ell)|^2\equiv1.
\end{equation}
We assert that $\gcd(\gcd \mathcal{D},p)=1$.
Suppose that $p=dp'$ and $\mathcal{D}=d\mathcal{D}'$, where $\gcd(\gcd \mathcal{D},p)=d>1$.
Note that $L$ is a bi-zero set of $\delta_{p^{-1}\mathcal{D}}=\delta_{p'^{-1}\mathcal{D}'}$.
Taking $\ell_1=0$, $\ell_2=p'\in L$, this implies that
$$0=\widehat{\delta_{p'^{-1}\mathcal{D}'}}(\ell_2-\ell_1)=\widehat{\delta_{\mathcal{D}'}}(1)=1.
$$
It is impossible, so the claim follows.
If $\xi=\frac{z}{\gcd \mathcal{D}}$ for some integer $z$,
then by the claim, there exists $\ell\in L$ such that $z+\ell\gcd \mathcal{D}=0\pmod p$.
Hence
$$
\widehat{\delta_{p^{-1}\mathcal{D}}}(\xi+\ell)=\widehat{\delta_{\mathcal{D}}}((p\gcd \mathcal{D})^{-1}(z+\ell\gcd \mathcal{D}))=1.
$$
From \eqref{(3.3)} and the definition of $T_{p,\mathcal{D}}$, one has $T_{p,\mathcal{D}}\subset[0,1)\setminus\frac{1}{\gcd \mathcal{D}}\mathbb{Z}$.
On the other hand, if $
|\widehat{\delta_{p^{-1}\mathcal{D}}}(\xi+\ell)|=1
$
for some $\xi\in[0,1)$ and $\ell\in L$,
then $p^{-1}d(\xi+\ell)\in\mathbb{Z}$ for all $d\in \mathcal{D}$.
This yields that
$\xi+\ell\in \frac{p}{\gcd\mathcal{D}}\mathbb{Z}$, i.e. $\xi\in\frac{1}{\gcd\mathcal{D}}\mathbb{Z}$,
which completes the proof.
\end{proof}
%\begin{proof}
{\textbf{Proof of Theorem \ref{thm2.1}: (ii) $\Leftrightarrow$ (iii)}}

(ii) $\Rightarrow$ (iii): We prove the result by a contradiction.
If (iii) doesn't hold, we assume that there exist integers $N$ and $d>1$ such that $p_n=\#\mathcal{D}_n$ and $d|\gcd \mathcal{D}_n$ for all $n\geq N$.
Together with Lemma \ref{thm3.3}, it implies that $\frac1d\not\in T_{p_n,\mathcal{D}_n},\;\forall n\geq N$.
Hence $\frac1d\not\in \cup_{k\geq n}T_{p_k,\mathcal{D}_k}$ for any $n\geq N$, a contradiction.
This proves the result.

(iii) $\Rightarrow$ (ii): If $\#\{j:p_j>\#\mathcal{D}_j\}=\infty$, by using Lemma \ref{thm3.2} and definition of \textit{DPC}, we obtain that $\cup_{k\geq n}T_{p_k,\mathcal{D}_k}\supset[0,1)$ for all $n\geq N$ and some $N\geq1$.
Hence $\limsup_{k\rightarrow\infty} T_{p_k,\mathcal{D}_k}=\cap_{n=1}^\infty\cup_{k\geq n}T_{p_k,\mathcal{D}_k}\supset[0,1)$.
For another case, suppose that there exists $N$ such that $p_n=\#\mathcal{D}_n$ and $\gcd_{j\geq n}\{\gcd \mathcal{D}_j\}=1$ for all $n\geq N$.
It follows from Lemma \ref{thm3.3} that
$$\cup_{k\geq n}T_{p_k,\mathcal{D}_k}=\cup_{k\geq n}([0,1)\setminus\frac{1}{\gcd \mathcal{D}_k}\mathbb{Z})=[0,1)\setminus(\cap_{k\geq n}\frac{1}{\gcd \mathcal{D}_k}\mathbb{Z})=(0,1),
$$
which gives the desired result.
%\end{proof}

\subsection{\textit{Equi-positivity and weak convergence}\label{sect3.2}}
\

In this subsection, we obtain the properties of equi-positivity and weak convergence.
The following well-known results are useful in this paper.
\begin{thm}[\cite{jiqiao}]\label{thm3.4}
Let $\mu_1,\mu_2,\cdots$ be locally finite Borel measures on $\mathbb{R}^s$ with $\sup_n\mu_n(A)<\infty$ for all bounded set $A$.
Then $\{\mu_n\}_{n=1}^\infty$ has a weakly convergent subsequence.
\end{thm}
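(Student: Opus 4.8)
The plan is to combine a diagonal extraction with the Riesz Representation Theorem, which is the standard route to Helly-type selection theorems adapted to $\mathbb{R}^s$ and to the weak (vague) topology used in this paper. Since convergence is tested only against $f\in C_0(\mathbb{R}^s)$ (continuous functions with compact support), and for each compact cube $K_m=[-m,m]^s$ the space of continuous functions supported in $K_m$ is separable in sup-norm, I would first fix a countable collection $\mathcal{F}=\{f_j\}_{j\geq1}\subset C_0(\mathbb{R}^s)$ whose members have compact supports exhausting $\mathbb{R}^s$ and which is dense in the sense that every $f\in C_0(\mathbb{R}^s)$ supported in $K_m$ can be uniformly approximated by elements of $\mathcal{F}$ supported in $K_{m+1}$.

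First I would extract a subsequence along $\mathcal{F}$. For fixed $f_j$ with $\mathrm{supp}(f_j)\subset A_j$ bounded, the hypothesis $\sup_n\mu_n(A_j)<\infty$ yields
\[
\left|\int f_j\,d\mu_n\right|\le \|f_j\|_\infty\,\mu_n(A_j)\le \|f_j\|_\infty\sup_n\mu_n(A_j)<\infty,
\]
so $\{\int f_j\,d\mu_n\}_n$ is a bounded scalar sequence. Applying Bolzano--Weierstrass successively to $f_1,f_2,\dots$ and passing to the Cantor diagonal subsequence produces a single sequence $\{\mu_{n_k}\}$ for which $L(f_j):=\lim_k\int f_j\,d\mu_{n_k}$ exists for every $j$.

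Next I would promote the convergence from $\mathcal{F}$ to all of $C_0(\mathbb{R}^s)$. Given $f$ supported in $K_m$, density provides $f_j\in\mathcal{F}$ supported in $K_{m+1}$ with $\|f-f_j\|_\infty<\varepsilon$. Writing
\[
\int f\,d\mu_{n_k}-\int f\,d\mu_{n_l}=\int (f-f_j)\,d\mu_{n_k}+\int f_j\,(d\mu_{n_k}-d\mu_{n_l})-\int (f-f_j)\,d\mu_{n_l},
\]
I would bound the first and third terms by $\varepsilon\sup_n\mu_n(K_{m+1})$ and send $k,l\to\infty$ in the middle term, concluding that $\{\int f\,d\mu_{n_k}\}_k$ is Cauchy; hence $L(f)=\lim_k\int f\,d\mu_{n_k}$ is defined on all of $C_0(\mathbb{R}^s)$. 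Since each $\mu_{n_k}$ is a positive measure, $L$ is a positive linear functional, and it satisfies $|L(f)|\le\|f\|_\infty\sup_n\mu_n(A)$ for $f$ supported in a bounded set $A$. By the Riesz Representation Theorem there is a locally finite Borel measure $\mu$ with $L(f)=\int f\,d\mu$; local finiteness follows from this last estimate, and therefore $\mu_{n_k}\xrightarrow{w}\mu$ in the sense of the paper.

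I expect the main obstacle to be the promotion step: one must secure a single finite constant controlling $\sup_n\mu_n$ on a cube slightly larger than $\mathrm{supp}(f)$, so that the $\varepsilon$-estimates are genuinely uniform in $n$ (and in $k,l$) rather than merely pointwise in the approximating function. This is precisely where the hypothesis ``$\sup_n\mu_n(A)<\infty$ for every bounded $A$'' is indispensable: without a uniform mass bound on a neighbourhood of $\mathrm{supp}(f)$, the error $\int(f-f_j)\,d\mu_{n_k}$ need not be small uniformly, and convergence along the countable dense family $\mathcal{F}$ would fail to force convergence for every $f\in C_0(\mathbb{R}^s)$.
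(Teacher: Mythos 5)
Your proposal is correct: the paper gives no proof of this theorem, citing it directly from Falconer's \emph{Techniques in Fractal Geometry}, and your argument (diagonal extraction along a countable sup-norm-dense family of compactly supported functions, a three-term Cauchy estimate using the uniform mass bound $\sup_n\mu_n(K_{m+1})<\infty$, then the Riesz Representation Theorem to realize the limiting positive functional as a locally finite Borel measure) is precisely the standard proof found in that source. You also correctly identify where the hypothesis is used, so nothing further is needed.
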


\begin{lem}[\cite{K.L.}]\label{thm3.5}
Let $\mu,\mu_1,\mu_2,\cdots$ be finite Borel measures on $\mathbb{R}^s$.
Then the following are equivalent.
\\
$(i)$\;\;\;$\{\mu_n\}$ converges weakly to a finite Borel measure $\mu$.
\\
$(ii)$\;\;For any open set $O$, $\mu(O)\leq \underline{\lim}_{n\rightarrow\infty}\mu_n(O)$.
\\
$(iii)$\;For any compact set $K$, $\mu(K)\geq \overline{\lim}_{n\rightarrow\infty}\mu_n(K)$.
\\
$(iv)$\;\;The Fourier transform $\{\widehat{\mu_n}(\xi)\}$ converges to $\widehat{\mu}(\xi)$ uniformly on all compact subsets
of $\mathbb{R}^s$.
\end{lem}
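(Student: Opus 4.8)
The plan is to recognize this as the classical Portmanteau theorem for the measure-theoretic conditions $(i)$--$(iii)$ together with L\'evy's continuity theorem for the Fourier condition $(iv)$, and to organize the argument as the cycle $(i)\Rightarrow(ii)\Rightarrow(iii)\Rightarrow(i)$ supplemented by the separate equivalence $(i)\Leftrightarrow(iv)$. For $(i)\Rightarrow(ii)$ I fix an open set $O$ and exhaust it from inside by compact sets $K\subset O$; inner regularity of $\mu$ gives $\mu(O)=\sup_{K}\mu(K)$, while Urysohn's lemma supplies, for each such $K$, a function $f\in C_0(\mathbb{R}^s)$ with $0\le f\le1$, $f\equiv1$ on $K$ and $\mathrm{supp}\,f\subset O$. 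Since $f\le\mathbf{1}_O$ pointwise we have $\int f\,d\mu_n\le\mu_n(O)$, whence $\underline{\lim}_{n}\mu_n(O)\ge\underline{\lim}_{n}\int f\,d\mu_n=\int f\,d\mu\ge\mu(K)$, and taking the supremum over $K$ yields $\underline{\lim}_{n}\mu_n(O)\ge\mu(O)$.

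For $(ii)\Rightarrow(iii)$ I would use complementation: in the probability-measure setting relevant to this paper the total masses are all equal to $1$, so $\mu_n(K)=1-\mu_n(\mathbb{R}^s\setminus K)$, and applying $(ii)$ to the open set $\mathbb{R}^s\setminus K$ converts the lower bound into $\overline{\lim}_{n}\mu_n(K)\le\mu(K)$. The implication $(iii)\Rightarrow(i)$, using $(ii)$ and $(iii)$ jointly, rests on the layer-cake identity $\int f\,d\mu=\int_{0}^{M}\mu(\{f>t\})\,dt$ valid for $f\in C_0(\mathbb{R}^s)$ with $0\le f\le M$; here each $\{f>t\}$ is open and each $\{f\ge t\}$ is compact, so $(ii)$ combined with Fatou's lemma gives $\underline{\lim}_{n}\int f\,d\mu_n\ge\int f\,d\mu$, while $(iii)$ combined with the dominated form of Fatou's lemma gives the matching upper bound. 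Splitting a general $f$ into positive and negative parts then delivers $(i)$.

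Finally, $(i)\Leftrightarrow(iv)$ is the content of L\'evy's continuity theorem. The forward direction $(i)\Rightarrow(iv)$ is where I expect the main obstacle: the Fourier kernel $x\mapsto e^{2\pi i\langle x,\xi\rangle}$ is bounded and continuous but not compactly supported, so convergence tested only against $C_0(\mathbb{R}^s)$ does not by itself control $\widehat{\mu_n}$, since mass could a priori escape to infinity. I would remove this obstruction through tightness, which in the present context is automatic because all the measures in question are probability measures carried by one fixed compact set $T$; then $C_0$-convergence coincides with testing against bounded continuous functions, $\widehat{\mu_n}\to\widehat{\mu}$ pointwise, and the uniform bound $|\widehat{\mu_n}(\xi)-\widehat{\mu_n}(\eta)|\le\int|e^{2\pi i\langle x,\xi\rangle}-e^{2\pi i\langle x,\eta\rangle}|\,d\mu_n$ makes $\{\widehat{\mu_n}\}$ equicontinuous, so that Arzel\`a--Ascoli upgrades pointwise convergence to uniform convergence on every compact set. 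The reverse direction $(iv)\Rightarrow(i)$ is the standard continuity theorem. Throughout, the recurring difficulty is the escape of mass permitted by the $C_0$ definition of weak convergence, and the uniform compact support $T$ is the device I rely on to keep total mass under control in both the complementation step and the Fourier step.
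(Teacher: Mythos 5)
The paper contains no proof of this lemma at all: it is quoted from Chung's textbook \cite{K.L.}, so there is no internal argument to compare against. Your proposal supplies the standard Portmanteau-plus-L\'evy proof, and in the setting where the paper actually invokes the lemma --- probability measures carried by one fixed compact set $T$ --- it is essentially correct. You also put your finger on the one genuine subtlety: with weak convergence tested only against $C_0(\mathbb{R}^s)$, the statement is \emph{false} for general finite Borel measures, because mass can escape to infinity. For instance, on $\mathbb{R}$ take $\mu_n=\delta_0+\delta_n$ and $\mu=\delta_0$: then $(i)$, $(ii)$, $(iii)$ all hold, yet $\widehat{\mu_n}(\xi)=1+e^{2\pi i n\xi}$ does not converge to $\widehat{\mu}(\xi)=1$, so $(iv)$ fails. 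Your appeal to the common compact support is exactly the right patch, and it is consistent with how the lemma is used in the paper (e.g.\ in Theorem \ref{thm4.3} and Theorem \ref{thm4.8}, where all $\nu_n$ live in a fixed compact set).

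Two repairs are needed to make the write-up airtight. First, your cycle $(i)\Rightarrow(ii)\Rightarrow(iii)\Rightarrow(i)$ is not actually a cycle as organized: in the step ``$(iii)\Rightarrow(i)$'' you use $(ii)$ as well, but $(ii)$ has not been derived from $(iii)$. The fix is immediate under your standing hypotheses: for open $O$, the set $O^c\cap T$ is compact and $\mu_n(O)\geq 1-\mu_n(O^c\cap T)$, so $(iii)$ gives $\liminf_{n\rightarrow\infty}\mu_n(O)\geq 1-\mu(O^c)=\mu(O)$; that is, your complementation argument is reversible, $(ii)\Leftrightarrow(iii)$, and then the layer-cake step legitimately uses both. (Without the common compact support this reversal genuinely fails even for probability measures: $\mu_n=\delta_n$, $\mu=\delta_0$ satisfies $(iii)$ on every compact set but not $(ii)$.) Second, in $(ii)\Rightarrow(iii)$ you should state explicitly that the limit $\mu$ itself has total mass $1$: $(ii)$ alone permits $\mu(\mathbb{R}^s)<1$, as in $\mu_n=\delta_0$, $\mu=\frac12\delta_0$, where $(ii)$ holds but $(iii)$ fails; so ``all masses equal $1$'' is a hypothesis you impose, not a consequence. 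Finally, for $(iv)\Rightarrow(i)$ you may avoid citing L\'evy's theorem altogether in this compactly supported setting: by Theorem \ref{thm3.4} every subsequence of $\{\mu_n\}$ has a weakly convergent sub-subsequence, whose limit has Fourier transform $\widehat{\mu}$ by the already-proved implication $(i)\Rightarrow(iv)$, hence equals $\mu$ by the Uniqueness Theorem (Theorem \ref{thm4.5}); thus the full sequence converges to $\mu$, using only tools the paper already has on hand.
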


For any $x\in\mathbb{R}^s$, denote the ball $B(x;\delta):=\{y\in\Bbb R^s:\text{dist}(x,y)<\delta\}$.
For any set $A\subset\mathbb{R}^s$, write $B(A;\delta):=\{y\in\Bbb R^s:\text{dist}(A,y)<\delta\}$.
\begin{prop}\label{thm3.6}
Let $\mu$ be a Borel probability measure with a compact support on $\mathbb{R}^s$, and let the integral periodic zero set
$$\mathcal{Z}(\mu)=\{\xi\in[0,1)^s:\widehat{\mu}(\xi+k)=0,\;\text{for\;all}\;k\in\mathbb{Z}^s\}.$$
For any $\delta>0$, then
$$
\inf_{x\in [0,1]^s\setminus B(\mathcal{Z}(\mu);\delta)}\sup_{k\in\mathbb{Z}^s}|\widehat{\mu}(x+k)|>0.
$$
If $\mu_{n}\xrightarrow{w}\mu$, then for any $\delta>0$, there exists $N$ such that, for all $n\geq N$,
$$
\inf_{x\in [0,1]^s\setminus B(\mathcal{Z}(\mu);\delta)}\sup_{k\in\mathbb{Z}^s}|\widehat{\mu_n}(x+k)|>0.
$$

\end{prop}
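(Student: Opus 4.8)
The plan is to treat the two assertions in turn, the second building on the first through a compactness argument. Throughout I write $g(x)=\sup_{k\in\mathbb{Z}^s}|\widehat{\mu}(x+k)|$ and $g_n(x)=\sup_{k\in\mathbb{Z}^s}|\widehat{\mu_n}(x+k)|$; both are $\mathbb{Z}^s$-periodic, so it is harmless to read $[0,1]^s$ and $B(\mathcal{Z}(\mu);\delta)$ on the torus $\mathbb{R}^s/\mathbb{Z}^s$ and to regard $K_\delta:=[0,1]^s\setminus B(\mathcal{Z}(\mu);\delta)$ as a compact subset there.

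For the first inequality the key observation is that $g$, being a supremum of the continuous functions $x\mapsto|\widehat{\mu}(x+k)|$, is lower semicontinuous. A lower semicontinuous function on the compact set $K_\delta$ attains its infimum, say at $x_0$, so it suffices to show $g(x_0)>0$. If $g(x_0)=0$, then $\widehat{\mu}(x_0+k)=0$ for every $k\in\mathbb{Z}^s$, i.e. the reduction of $x_0$ modulo $\mathbb{Z}^s$ lies in $\mathcal{Z}(\mu)$; but that reduction is at (toroidal) distance $\geq\delta$ from $\mathcal{Z}(\mu)$, a contradiction. (Since $\widehat{\mu}(0)=\mu(\mathbb{R}^s)=1$, lattice points never lie in the zero set, which is what keeps the boundary of the cube harmless.) Hence $\inf_{K_\delta}g=g(x_0)>0$.

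For the second inequality set $c:=\inf_{K_\delta}g>0$ from the first part and argue by contradiction. If the claim failed, there would be a subsequence $\mu_{n_j}$ with $\inf_{K_\delta}g_{n_j}=0$; since each $g_{n_j}$ is lower semicontinuous on the compact set $K_\delta$, we may pick $x_j\in K_\delta$ with $g_{n_j}(x_j)\to 0$, and after passing to a further subsequence $x_j\to x_*\in K_\delta$. The first part gives $g(x_*)\geq c$, so we may fix a \emph{single} frequency $k_*\in\mathbb{Z}^s$ with $|\widehat{\mu}(x_*+k_*)|>c/2$. Because $k_*$ is fixed, all the points $x_j+k_*$ lie in the one compact set $[0,1]^s+k_*$; by Lemma \ref{thm3.5}(iv) we have $\widehat{\mu_{n_j}}\to\widehat{\mu}$ uniformly there, and $\widehat{\mu}$ is continuous, so $\widehat{\mu_{n_j}}(x_j+k_*)\to\widehat{\mu}(x_*+k_*)$. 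Consequently
$$
\liminf_{j\to\infty}g_{n_j}(x_j)\geq\liminf_{j\to\infty}\big|\widehat{\mu_{n_j}}(x_j+k_*)\big|=\big|\widehat{\mu}(x_*+k_*)\big|>\frac{c}{2}>0,
$$
contradicting $g_{n_j}(x_j)\to 0$. This proves the second inequality.

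The main obstacle — and the reason Lemma \ref{thm3.5}(iv) cannot be applied naively — is that the frequency $k$ realizing the supremum in $g(x)$ depends on $x$ and may run off to infinity as $x$ varies, so the lower bound on $g$ does not transfer to $g_n$ frequency-by-frequency on a fixed compact set. The contradiction argument circumvents this by first extracting the limit point $x_*$ and only \emph{then} selecting the good frequency $k_*$ from the limit measure $\mu$; with $k_*$ held fixed, the moving points $x_j+k_*$ stay in a fixed compact set, where the convergence of Lemma \ref{thm3.5}(iv) does apply. (One could alternatively exploit that the common compact support of the $\mu_n$ makes $\{\widehat{\mu_n}\}$ equi-Lipschitz, but this is not needed for the argument above.)
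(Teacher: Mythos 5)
Your proof is correct, but it runs on a different compactness mechanism than the paper's. The paper argues by finite covering: for each $x\in[0,1]^s\setminus B(\mathcal{Z}(\mu);\delta)$ it fixes a frequency $k_x$ and a ball on which $|\widehat{\mu}(\cdot+k_x)|\geq\frac12\epsilon_x$, extracts a finite subcover $\{B(x_i;\delta_{x_i})\}_{i=1}^m$, sets $\epsilon=\frac12\min_i\epsilon_{x_i}$, and for the second claim applies Lemma \ref{thm3.5}(iv) on each of the finitely many balls with its fixed frequency $k_{x_i}$, taking $N=\max_iN_i$. You instead obtain the first inequality from lower semicontinuity of $g(x)=\sup_{k}|\widehat{\mu}(x+k)|$ (so the infimum over the compact set is attained and cannot vanish off $\mathcal{Z}(\mu)$), and the second by sequential compactness and contradiction, selecting the single frequency $k_*$ only after passing to the limit point $x_*$; this is the same key trick as the paper's (fix the frequency first, so the moving points stay in one compact set where Lemma \ref{thm3.5}(iv) applies), organized around sequences rather than covers, and it is arguably cleaner for the first inequality. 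One substantive difference deserves note: the paper's covering proof delivers a lower bound $\frac14\min_i\epsilon_{x_i}$ that is uniform in both $x$ and $n\geq N$, and it is this uniform-in-$n$ version that the paper actually invokes later (equi-positivity in Definition \ref{de2.2} and Lemma \ref{thm3.7}, and the estimate \eqref{(4.7)} in Theorem \ref{thm4.8} all require an $\varepsilon$ independent of $n$). Your contradiction, launched from the negation $\inf_{K_\delta}g_{n_j}=0$, proves only the literal per-$n$ positivity stated in the proposition; it upgrades verbatim, though: assume merely $\inf_{K_\delta}g_{n_j}\to0$, pick $x_j$ with $g_{n_j}(x_j)\leq\inf_{K_\delta}g_{n_j}+1/j$, and the identical argument yields $\liminf_{n\to\infty}\inf_{K_\delta}g_n>0$. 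Finally, your toroidal reading quietly repairs a boundary blemish shared by the paper's own proof in dimensions $s\geq2$, where a point on an upper face of $[0,1]^s$ can reduce modulo $\mathbb{Z}^s$ into $\mathcal{Z}(\mu)$ while being Euclidean-far from it; in the paper's application $s=1$, and there $\widehat{\mu}(0)=1$ makes the boundary harmless, as you observed.
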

\begin{proof}
We prove the lemma by finite covering theorem.
For any $x\in [0,1]^s\setminus B(\mathcal{Z}(\mu);\delta)$, by the definition of $\mathcal{Z}(\mu)$, we obtain
$$
|\widehat{\mu}(x+k_x)|\geq \epsilon_{x}
$$
for some $k_x\in\mathbb{Z}^s$ and $\epsilon_{x}>0$.
The continuity gives that
\begin{equation}\label{(3.4)}
|\widehat{\mu}(y+k_x)|\geq \frac12\epsilon_{x},\;\forall y\in B(x;\delta_x)
\end{equation}
for some $\delta_x>0$.
Note that $[0,1]^s\setminus B(\mathcal{Z}(\mu);\delta)$ is a compact set and
$$\cup_{x\in [0,1]^s\setminus B(\mathcal{Z}(\mu);\delta)}B(x;\delta_{x})\supset [0,1]^s\setminus B(\mathcal{Z}(\mu);\delta).$$
There exist balls $\{B(x_i;\delta_{x_i})\}_{i=1}^m$ satisfying
$$
\cup_{i=1}^mB(x_i;\delta_{x_i})\supset [0,1]^s\setminus B(\mathcal{Z}(\mu);\delta).
$$
Let $\epsilon:=\frac12\min_{i}\epsilon_{x_i}$.
For any $x\in[0,1]^s\setminus B(\mathcal{Z}(\mu);\delta)$, there exists $i$ such that $x\in B(x_i;\delta_{x_i})$.
It follows from \eqref{(3.4)} that
\begin{equation*}
|\widehat{\mu}(x+k_{x_i})|\geq \frac12\epsilon_{x_i}\geq \epsilon>0,
\end{equation*}
which gives the desired inequality.

Now assume that $\mu_{n}\xrightarrow{w}\mu$.
It follows from \eqref{(3.4)} and (iv) of Lemma \ref{thm3.5} that there exists $N_i>0$ such that
$$
|\widehat{\mu_n}(x+k_{x_i})|\geq \frac14\epsilon_{x_i},\;\:\: \forall x\in B(x_i;\delta_{x_i})
$$
for all large $n\geq N_i$.
Finally, the conclusion follows from the finiteness of the number of $\{x_i\}$ and $N=\max N_i$.
\end{proof}
By the above conclusion, the following result holds directly.
\begin{lem}\label{thm3.7}
Let $\{\mu_n\}$ and $\mu$ be Borel probability measures with compact supports on $\mathbb{R}^s$.
Suppose that $\mu_n\xrightarrow{w}\mu$.
If $\mathcal{Z}(\mu)=\emptyset$, then there exists an equi-positive subsequence $\{\mu_{n_k}\}$ of $\{\mu_n\}$.
\end{lem}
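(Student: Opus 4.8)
The plan is to read this off directly from Proposition \ref{thm3.6}, so that the whole argument amounts to unwinding the two definitions involved. The governing observation will be that, when $\mathcal{Z}(\mu)=\emptyset$, the excised neighborhood is empty at every scale: since $\text{dist}(\emptyset,y)=+\infty$ for all $y$, one has $B(\mathcal{Z}(\mu);\delta)=\emptyset$ and therefore $[0,1]^s\setminus B(\mathcal{Z}(\mu);\delta)=[0,1]^s$ for each $\delta>0$. Thus the region on which Proposition \ref{thm3.6} controls the Fourier transform is the whole cube $[0,1]^s$, which in particular contains the cube $[0,1)^s$ appearing in the definition of an equi-positive family (Definition \ref{de2.2}).

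Next I would invoke the weak-convergence half of Proposition \ref{thm3.6}, say with $\delta=1$ (the value is immaterial after the reduction above). It provides an index $N$ beyond which $\inf_{x\in[0,1]^s}\sup_{k\in\mathbb{Z}^s}|\widehat{\mu_n}(x+k)|>0$. The decisive point for equi-positivity is that this holds with a \emph{single} positive constant valid simultaneously for every $n\geq N$, and this uniformity is already present in the proof of Proposition \ref{thm3.6}: the finite cover $\{B(x_i;\delta_{x_i})\}_{i=1}^m$, the numbers $\epsilon_{x_i}$, the shifts $k_{x_i}$, and the thresholds $N_i$ all depend on $\mu$ alone and not on $n$. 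Setting $\epsilon_0:=\tfrac14\min_{1\le i\le m}\epsilon_{x_i}$ and $N:=\max_i N_i$, one obtains $|\widehat{\mu_n}(x+k_{x_i})|\geq\epsilon_0$ whenever $x\in B(x_i;\delta_{x_i})$ and $n\geq N$; hence for every $x\in[0,1]^s$ and every $n\geq N$ there is some $k\in\mathbb{Z}^s$ with $|\widehat{\mu_n}(x+k)|\geq\epsilon_0$.

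Finally I would take the tail as the required subsequence: put $n_k:=N+k-1$, so that $\{\mu_{n_k}\}=\{\mu_n\}_{n\geq N}$. For each $x\in[0,1)^s$ and each $k$, the previous step supplies $k_{x,\mu_{n_k}}\in\mathbb{Z}^s$ with $|\widehat{\mu_{n_k}}(x+k_{x,\mu_{n_k}})|\geq\epsilon_0$, which is precisely the requirement of Definition \ref{de2.2}; thus $\{\mu_{n_k}\}$ is equi-positive. The one place that genuinely needs attention is confirming that the constant delivered by Proposition \ref{thm3.6} does not depend on $n$, because equi-positivity is a uniform statement across the whole family rather than a pointwise assertion for each single measure. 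Once the $\mathcal{Z}(\mu)=\emptyset$ reduction collapses the excised set, this uniformity is the sole substantive ingredient, and it is inherited verbatim from the construction in the proof of Proposition \ref{thm3.6}.
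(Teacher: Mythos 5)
Your proposal is correct and is essentially the paper's own argument: the paper derives Lemma \ref{thm3.7} directly from Proposition \ref{thm3.6} (stating only that it ``holds directly''), exactly via the reduction you make, namely that $\mathcal{Z}(\mu)=\emptyset$ forces $B(\mathcal{Z}(\mu);\delta)=\emptyset$ so the infimum runs over all of $[0,1]^s$, with the uniform constant and threshold $N$ inherited from the finite-cover construction and the tail $\{\mu_n\}_{n\geq N}$ serving as the subsequence. Your explicit check that the constant $\epsilon_0$ is uniform in $n$ is a worthwhile clarification of a point the paper leaves implicit, but it does not change the route.
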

\begin{thm}\label{thm3.8}
Let $\{\mu_n\}$ and $\mu$ be Borel probability measures with compact supports on $\mathbb{R}^s$.
Suppose that $\mu_{n}\xrightarrow{w}\mu$.
Then $\mathcal{Z}(\mu)=\emptyset$ if and only if there exist $\varepsilon,N>0$ such that $\mathcal{Z}(\mu_n)=\emptyset$ for $n\geq N$ and
\begin{equation}\label{(3.5*)}
\sup_{n\geq N,\xi\in[0,1)^s}\min\{|k_\xi|:|\widehat{\mu_{n}}(\xi+k_\xi)|\geq\varepsilon\}<\infty.
\end{equation}
In particular, if $\mu_n$ can be written as $\mu_n=\delta_1\ast\cdots\ast\delta_n$, where $\delta_j$ is a discrete probability measure with $\sup_n\#\text{spt}\delta_n<\infty$ and $\text{spt}\mu\subset K$ for some compact set $K$, then $\mathcal{Z}(\mu)=\emptyset$ if and only if
there exist $N>0$ such that $\mathcal{Z}(\mu_n)=\emptyset$ for $n\geq N$ and
$$
\sup_{n\geq N,\xi\in[0,1)^s}\min\{|k_\xi|:\widehat{\mu_{n}}(\xi+k_\xi)\neq0\}<\infty.
$$
\end{thm}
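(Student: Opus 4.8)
The plan is to read off the first equivalence from Proposition~\ref{thm3.6} together with the uniform convergence of Fourier transforms (Lemma~\ref{thm3.5}(iv)), and then to reduce the discrete ``in particular'' statement to it by exploiting the multiplicative structure $\widehat{\mu_n}=\prod_{j\le n}\widehat{\delta_j}$. For the forward implication, suppose $\mathcal{Z}(\mu)=\emptyset$. Then $B(\mathcal{Z}(\mu);\delta)=\emptyset$, so $[0,1]^s\setminus B(\mathcal{Z}(\mu);\delta)=[0,1]^s$, and I would re-run the covering argument from the proof of Proposition~\ref{thm3.6}: compactness of $[0,1]^s$ produces finitely many integer translates $k_{x_1},\dots,k_{x_m}\in\mathbb{Z}^s$ and some $\epsilon>0$ with $\max_{1\le i\le m}|\widehat{\mu}(x+k_{x_i})|\ge\epsilon$ for every $x\in[0,1]^s$. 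The decisive gain over the bare statement of Proposition~\ref{thm3.6} is that the witnessing frequencies are now drawn from a \emph{finite} set, so $|k_{x_i}|\le M:=\max_i|k_{x_i}|$. Applying Lemma~\ref{thm3.5}(iv) on the compact set $\bigcup_i\big([0,1]^s+k_{x_i}\big)$ yields $N$ such that $\max_i|\widehat{\mu_n}(x+k_{x_i})|\ge\epsilon/2$ for all $n\ge N$ and all $x\in[0,1]^s$. This simultaneously gives $\mathcal{Z}(\mu_n)=\emptyset$ for $n\ge N$ and the bound \eqref{(3.5*)} with $\varepsilon=\epsilon/2$, since the minimal witnessing $|k_\xi|$ never exceeds $M$.

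The reverse implication is a pigeonhole argument. Assume \eqref{(3.5*)} together with $\mathcal{Z}(\mu_n)=\emptyset$ for $n\ge N$, and suppose for contradiction that some $\xi_0\in\mathcal{Z}(\mu)$, i.e. $\widehat{\mu}(\xi_0+k)=0$ for all $k\in\mathbb{Z}^s$. For each $n\ge N$ the finiteness in \eqref{(3.5*)} provides $k_n$ with $|k_n|\le M$ and $|\widehat{\mu_n}(\xi_0+k_n)|\ge\varepsilon$; as there are only finitely many integer vectors of norm $\le M$, some fixed $k^\ast$ serves infinitely many $n$. But Lemma~\ref{thm3.5}(iv) forces $\widehat{\mu_n}(\xi_0+k^\ast)\to\widehat{\mu}(\xi_0+k^\ast)=0$, contradicting $|\widehat{\mu_n}(\xi_0+k^\ast)|\ge\varepsilon$. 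Hence $\mathcal{Z}(\mu)=\emptyset$.

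For the discrete refinement, the implication from $\mathcal{Z}(\mu)=\emptyset$ to the version with $\widehat{\mu_n}(\xi+k_\xi)\ne0$ is immediate: the set of $k$ with $\widehat{\mu_n}(\xi+k)\ne0$ contains the set with $|\widehat{\mu_n}(\xi+k)|\ge\varepsilon$, so the already established bound \eqref{(3.5*)} is only improved. The converse is the main obstacle, and it is where the discreteness is indispensable: the pigeonhole step above breaks down, because a nonvanishing value $\widehat{\mu_n}(\xi_0+k^\ast)\ne0$ may still tend to $0$, so it cannot by itself contradict $\widehat{\mu}(\xi_0+k^\ast)=0$. To close this gap I would use the convolution structure. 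Writing $\mu=\mu_n\ast\widetilde{\nu}_n$ with $\widetilde{\nu}_n=\delta_{n+1}\ast\delta_{n+2}\ast\cdots$, one has $\widehat{\mu}=\widehat{\mu_n}\,\widehat{\widetilde{\nu}_n}$. Assuming, as we may after recentering, that $0\in\text{spt}\,\delta_j$, the hypotheses $\sup_n\#\text{spt}\,\delta_n<\infty$ and $\text{spt}\,\mu\subset K$ force $\sum_j\mathrm{diam}(\text{spt}\,\delta_j)<\infty$, so $\text{spt}\,\widetilde{\nu}_n$ collapses to $\{0\}$ and $\widehat{\widetilde{\nu}_n}\to1$ uniformly on compact sets.

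With this in hand the contradiction closes cleanly: if $\xi_0\in\mathcal{Z}(\mu)$, the pigeonhole gives $k^\ast$ with $|k^\ast|\le M$ and $\widehat{\mu_n}(\xi_0+k^\ast)\ne0$ for infinitely many $n$; since $\widehat{\mu}(\xi_0+k^\ast)=0$, the factorization forces $\widehat{\widetilde{\nu}_n}(\xi_0+k^\ast)=0$ for those $n$, contradicting $\widehat{\widetilde{\nu}_n}(\xi_0+k^\ast)\to1$. I expect the real work to lie precisely in establishing the uniform convergence $\widehat{\widetilde{\nu}_n}\to1$, i.e. the collapse of the tail supports from the compactness and bounded-cardinality hypotheses, since this is the one step for which the general (non-discrete) argument does not suffice.
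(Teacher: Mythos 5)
Your proposal is correct and follows essentially the same route as the paper's proof: a finite covering of $[0,1]^s$ together with Lemma \ref{thm3.5}(iv) for the forward direction (with the finiteness of the covering giving both $\mathcal{Z}(\mu_n)=\emptyset$ and the bound in \eqref{(3.5*)}), a bounded-$|k|$ pigeonhole subsequence for the converse, and for the discrete refinement the factorization $\mu=\mu_n\ast\mu_{>n}$ with the tail transforms $\widehat{\mu_{>n}}$ uniformly bounded below on compact sets. The single step you flag as ``the real work''---that $\sup_n\#\text{spt}\,\delta_n<\infty$ and $\text{spt}\,\mu\subset K$ force $\sum_j\max\{|a|:a\in\text{spt}\,\delta_j\}<\infty$, hence the collapse of $\text{spt}\,\mu_{>n}$ and $\widehat{\mu_{>n}}\to1$---is exactly what the paper imports from Corollary 1.2 of \cite{LMW22}, after which the paper concludes directly that $\widehat{\mu}(\xi+k_\xi)=\widehat{\mu_{N'}}(\xi+k_\xi)\,\widehat{\mu_{>N'}}(\xi+k_\xi)\neq0$ instead of running your contradiction, a purely cosmetic difference.
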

\begin{proof}
We prove the necessity of the first conclusion.
Assume by contradiction that $\mathcal{Z}(\mu)\neq\emptyset$.
Let $\xi\in\mathcal{Z}(\mu)$.
The assumption \eqref{(3.5*)} implies that, for all $n\geq N$ there exists $k_{n,\xi}\in\mathbb{Z}^s$ such that $|k_{n,\xi}|<C$ and
$$
|\widehat{\mu_{n}}(\xi+k_{n,\xi})|\geq\varepsilon>0,
$$
where $C$ is the supremum of \eqref{(3.5*)}.
Choose a subsequence $\{n_j\}_{j=1}^\infty\subset\mathbb{N}$ such that
$$
|\widehat{\mu_{n_{j}}}(\xi+k_{n_{j},\xi})|\geq\varepsilon>0
$$
and $k_{n_{j},\xi}=k_{n_{j'},\xi}$ for any $j\neq j'$.
Hence, by (iv) of Lemma \ref{thm3.4},
$$
|\widehat{\mu}(\xi+k_{n_{1},\xi})|=\lim_{j\rightarrow\infty}|\widehat{\mu_{n_{j}}}(\xi+k_{n_{j},\xi})|\geq\varepsilon>0.
$$
This yields that $\xi\not\in\mathcal{Z}(\mu)$, a contradiction.

We next consider the sufficiency.
For any $\xi\in\mathbb{R}^s$, there exists $k_\xi\in\mathbb{Z}$ such that
$$
\widehat{\mu}(\xi+k_\xi)\neq0.
$$
Then there exists a ball $B(\xi;\delta_x)$ such that
$$
|\widehat{\mu}(y+k_\xi)|\geq\epsilon_\xi>0,\;\:\: \forall y\in B(\xi;\delta_x).
$$
By using the fact $[0,1]^s\subset\cup_{\xi\in[0,1]^s}B(\xi;\delta_{\xi})$ and finite covering theorem, there exist $\{\xi_i\}_{i=1}^m$ such that
$[0,1]^s\subset\cup_{j=1}^m B(\xi_j,\delta_{\xi_j})$.
Hence for any $\xi\in[0,1]^s$, there exists $B(\xi_j;\delta_{\xi_j})\ni\xi$ such that
$$
|\widehat{\mu}(\xi+k_{\xi_i})|\geq\epsilon_0>0,
$$
where $\epsilon_0:=\min_{1\leq j\leq m}\epsilon_{\xi_j}$.
Thus by (iv) of Lemma \ref{thm3.5}, there exists $N>0$ such that $|\widehat{\mu}_{n}(\xi+k_{\xi_i})|\geq\frac12\epsilon_0$ for all $n\geq N$.

In the following, we assume that $\mu_n=\delta_1\ast\cdots\ast\delta_n$, where $\delta_j$ is a discrete
probability measure with $\sup_n\#\text{spt}\delta_n<\infty$.
From the above conclusion, we would need to show \eqref{(3.5*)}.
The existence of $\mu$ allows us to assume $\mu_{>n}:=\delta_{n+1}\ast\delta_{n+2}\ast\cdots$ exists and $\mu=\mu_n\ast\mu_{>n}$.
By Corollary 1.2 in \cite{LMW22}, one has $\sum_{j=1}^\infty\max\{|a|:a\in\text{spt}\delta_j\}<\infty$.
Therefore $\lim_{n\rightarrow\infty}\sum_{j=n}^\infty\max\{|a|:a\in\text{spt}\delta_j\}=0$.
This also yields that $\lim_{n\rightarrow\infty}\max\{|a|:a\in\text{spt}\mu_{>n}\}=0$.
Then, for any compact set $K'\subset\mathbb{R}^s$ and $\xi\in K'$,
there exists $N_{K'}>0$ such that $\max\{|a|:a\in\text{spt}\mu_{>n}\}\max\{|a|:a\in K'\}\leq \frac16$ for any $n\geq N_{K'}$.
Hence
$$
|\widehat{\mu_{>n}}(\xi)|=\left|\int e^{2\pi i\xi x}d\mu_{>n}(x)\right|\geq
\left|\int \cos2\pi \xi xd\mu_{>n}(x)\right|\geq \frac12
$$
for any $n\geq N_{K'}$.
Note that the condition
$$
\sup_{n\geq N,\xi\in[0,1)^s}\min\{|k_\xi|:\widehat{\mu_{n}}(\xi+k_\xi)\neq0\}<\infty
$$
implies that
$\{\xi+k_\xi:\xi\in[0,1]^s\}:=K'$ is uniformly bounded.
We can find an integer $N'\geq\max\{N,N_{K'}\}$ such that for any $\xi\in[0,1]$, there exists $k_\xi$ satisfying
$|\widehat{\mu_{N'}}(\xi+k_\xi)|\geq \epsilon_\xi$ and $
|\widehat{\mu_{>N'}}(\xi+k_\xi)|\geq \frac12.
$
By using the compactness of $[0,1]$ and the continuity of $\widehat{\mu_{N'}}(\xi)$, there exists $\varepsilon>0$ such that
$$
|\widehat{\mu_{N'}}(\xi+k_\xi)|\geq \varepsilon.
$$
Finally, we complete the proof from
$$
|\widehat{\mu}(\xi+k_\xi)|=|\widehat{\mu_{N'}}(\xi+k_\xi)||\widehat{\mu_{>N'}}(\xi+k_\xi)|\geq \frac12\varepsilon.
$$
\end{proof}

\subsection{\textit{A useful estimate}\label{sect3.3}}
\

\begin{lem}\label{thm3.9}
Let integer $|p_n|>1$ and finite digit set $D_n\subset\mathbb{Z}$ for all $n\geq1$.
Suppose that $C:=\sup_n\{|p_n^{-1}d|:d\in D_n\}<\infty$.
For any $n\geq1$ and $m>0$, there exists $J_0:=\max\{[\log_2 8Cm],0\}+1$ such that $|p_{n+1}^{-1}p_{n+2}^{-1}\cdots p_{n+J_0}^{-1}|Cm<\frac14$ and $$\prod_{j=1}^\infty|\widehat{\delta_{p_{n+1}^{-1}\cdots p_{n+J_0+j}^{-1}D_{n+J_0+j}}}(\xi)|\geq\frac{2}{\pi}
$$
for any $\xi\in\mathbb{R}$ with $|\xi|<m$.
\end{lem}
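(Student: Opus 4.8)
The plan is to prove the two assertions separately: the bound on the head product is elementary, while the infinite-product estimate rests on a sharp per-factor lower bound combined with a telescoping cosine identity. For the first assertion, since each $p_j$ is an integer with $|p_j|>1$ we have $|p_j|\ge 2$, so $|p_{n+1}^{-1}\cdots p_{n+J_0}^{-1}|\le 2^{-J_0}$ and it suffices to show $2^{-J_0}Cm<\frac14$. This is immediate from $J_0=\max\{[\log_2 8Cm],0\}+1$: if $8Cm\ge1$ then $2^{J_0}>2^{\log_2 8Cm}=8Cm$, so $2^{-J_0}Cm<\frac18$, while if $8Cm<1$ then $J_0=1$ and $2^{-1}Cm<\frac1{16}$. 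In either case one in fact gets the stronger inequality $2^{-J_0}Cm<\frac18$, which I will use for the product estimate.

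For the product, write $q_j=p_{n+1}p_{n+2}\cdots p_{n+J_0+j}$, so the $j$-th factor is $\widehat{\delta_{q_j^{-1}D_{n+J_0+j}}}(\xi)=\frac{1}{\#D_{n+J_0+j}}\sum_{d\in D_{n+J_0+j}}e^{2\pi i q_j^{-1}d\xi}$. Splitting off the last scaling gives $|q_j^{-1}d|\le|(p_{n+1}\cdots p_{n+J_0+j-1})^{-1}|\,|p_{n+J_0+j}^{-1}d|\le 2^{-(J_0+j-1)}C$, so for $|\xi|<m$ each phase satisfies $|2\pi q_j^{-1}d\xi|\le 2\pi\tau_j$ with $\tau_j:=2^{-(j-1)}\cdot 2^{-J_0}Cm$. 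Since $\tau_j\le\tau_1=2^{-J_0}Cm<\frac18$, every phase lies in $(-\frac{\pi}{4},\frac{\pi}{4})$; taking real parts and using that $\cos$ is even and decreasing on $[0,\frac{\pi}{2}]$ yields the per-factor bound
$$
\Big|\widehat{\delta_{q_j^{-1}D_{n+J_0+j}}}(\xi)\Big|\ge \mathrm{Re}\,\widehat{\delta_{q_j^{-1}D_{n+J_0+j}}}(\xi)\ge\cos(2\pi\tau_j)>0.
$$

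It then remains to bound $\prod_{j=1}^\infty\cos(2\pi\tau_j)$ from below, and here the exact dyadic decay of $\tau_j$ is decisive. Since $\tau_j=2^{-(j-1)}\tau_1$, the phases satisfy $2\pi\tau_j=(4\pi\tau_1)2^{-j}$, so by the Vi\`ete-type identity $\prod_{k=1}^\infty\cos(x2^{-k})=\frac{\sin x}{x}$ (applied with $x=4\pi\tau_1$) the product equals $\frac{\sin(4\pi\tau_1)}{4\pi\tau_1}$. Finally $4\pi\tau_1<\frac{4\pi}{8}=\frac{\pi}{2}$, and since $x\mapsto\frac{\sin x}{x}$ is strictly decreasing on $(0,\pi)$, its value exceeds $\frac{\sin(\pi/2)}{\pi/2}=\frac{2}{\pi}$. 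Combining this with the per-factor bound gives $\prod_{j=1}^\infty|\widehat{\delta_{q_j^{-1}D_{n+J_0+j}}}(\xi)|\ge\frac{\sin(4\pi\tau_1)}{4\pi\tau_1}>\frac{2}{\pi}$, as required.

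The main obstacle is pinning down the exact constant $\frac{2}{\pi}$. A crude per-factor estimate such as $|\widehat{\delta}_j|\ge 1-4\tau_j$ followed by $\prod(1-4\tau_j)\ge 1-\sum 4\tau_j$ only yields a positive but too-small constant, since $\sum_j\tau_j=2\tau_1$ and the resulting bound $1-8\tau_1$ degrades to near zero as $\tau_1\uparrow\frac18$, well below $\frac{2}{\pi}$. The key is to keep the full cosine in each factor and to exploit the precise geometric decay $\tau_j=2^{-(j-1)}\tau_1$, so that the product telescopes through Vi\`ete's formula into the sinc function, whose monotonicity then delivers the sharp threshold $\frac{2}{\pi}$ exactly at the margin $\tau_1<\frac18$ guaranteed by the choice of $J_0$.
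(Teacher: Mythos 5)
Your proof is correct and follows essentially the same route as the paper's: both confine every phase $2\pi q_j^{-1}d\xi$ to a dyadically shrinking arc determined by the choice of $J_0$, bound each factor below by the cosine of the maximal phase (your real-part argument is just the analytic form of the paper's convex-set/distance argument with the sets $A_j$), and evaluate the resulting infinite cosine product via Vi\`ete's identity $\prod_{k\ge1}\cos(x2^{-k})=\frac{\sin x}{x}$ to land on $\frac{2}{\pi}$. The only cosmetic difference is that you keep the exact value $\tau_1<\frac18$ and invoke monotonicity of $\frac{\sin x}{x}$, whereas the paper first worsens each factor to $\cos(2^{-j-1}\pi)$ and computes that fixed product exactly.
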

\begin{proof}
Since $|p_n^{-1}|<\frac12$, there exists $0<r\leq\frac18$ such that $|p_{n+1}^{-1}\cdots p_{n+J_0}^{-1}|Cm\leq r$, i.e.,
$$| p_{n+1}^{-1}\cdots p_{n+J_0+j}^{-1}d_{n+J_0+j}\xi|\leq 2^{-j+1}r,\;\;\forall |\xi|<m$$ for any $d_{n+J_0+j}\in D_{n+J_0+j}$ and $j\geq1$.
Therefore the argument of $e^{2\pi i  p_{n+1}^{-1}\cdots p_{n+J_0+j}^{-1}d_{n+J_0+j} \xi}$ belongs to $(-2^{-j+2}r\pi,2^{-j+2}r\pi)\subset(-\frac{\pi}{2},\frac{\pi}{2})$ for any $|\xi|<m$.
Let $$A_j:=\{x+yi\in\mathbb{C}:x^2+y^2\leq1,x\geq\cos 2^{-j+2}r\pi\}.
$$
Note that $A_j$ is a convex set.
Thus the convex combination
$$
\frac{1}{\#D_{n+J_0+j}}\sum_{d\in D_{n+J_0+j}}e^{2\pi i p_{n+1}^{-1}\cdots p_{n+J_0+j}^{-1}d_{n+J_0+j} \xi}
$$ falls into $A_j$,
which yields that
\begin{align*}
|\widehat{\delta_{p_{n+1}^{-1}\cdots p_{n+J_0+j}^{-1}D_{n+J_0+j}}}(\xi)|&=\left|\frac{1}{\#D_{n+J_0+j}}\sum_{d\in D_{n+J_0+j}}e^{2\pi i p_{n+1}^{-1}\cdots p_{n+J_0+j}^{-1}d_{n+J_0+j} \xi}\right|
\\&\geq \text{dist}(\mathbf{0},A_j)
\geq\cos 2^{-j-1}\pi.
\end{align*}
Hence $$\prod_{j=1}^\infty|\widehat{\delta_{p_{n+1}^{-1}\cdots p_{n+J_0+j}^{-1}D_{n+J_0+j}}}(\xi)|\geq \prod_{j=1}^\infty\cos 2^{-j-1}\pi=\frac{2}{\pi}.$$
\end{proof}

\bigskip
\section{\bf Proof of Theorem \ref{thm2.1}\label{sect.4}}
\setcounter{equation}{0}

In Section \ref{sect.3}, we show that (ii) and (iii) of Theorem \ref{thm2.1} are equivalent.
In this section, we complete the proof of Theorem \ref{thm2.1}.
The main difficulty lies in proving (ii) $\Rightarrow$ (i). We divide it into two cases.

\subsection{\emph{(ii) $\Rightarrow$ (i) when } \textit{${\sup_n\{d:d\in \mathcal{D}_n\}=\infty}$}\label{sect4.1}}
\

We apply Wiener theorem to prove this case.
The following analyticity of Fourier transform of Borel measure is a basic tool in this section.
\begin{lem}\label{thm4.1}
Let $\mu$ be a Borel probability measure with a compact support in $\mathbb{R}$.
Then its Fourier transform
$$\widehat{\mu}(z)=\int_{\mathbb{R}}e^{2\pi itz}d\mu(t)$$
is an analytic function in $\mathbb{C}$.
\end{lem}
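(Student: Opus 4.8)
The plan is to show that $\widehat{\mu}$ extends to an entire function by exhibiting a globally convergent power series, the decisive feature being that compact support forces all moments of $\mu$ to be finite and to grow no faster than geometrically. Write $K:=\text{supp}\,\mu$ and $M:=\max\{|t|:t\in K\}<\infty$, and set $c_n:=\int_{\mathbb{R}}t^n\,d\mu(t)$ for $n\geq0$; since $|t|\leq M$ on $K$ and $\mu$ is a probability measure, each moment is finite with $|c_n|\leq M^n$.

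First I would expand the integrand as its power series $e^{2\pi itz}=\sum_{n\geq0}\frac{(2\pi itz)^n}{n!}$ and interchange summation and integration. This interchange is the one point that needs justification, but it is routine here: for $z$ in any fixed disc $|z|\leq\rho$ the partial sums are dominated on $K$ by the summable majorant $\sum_n\frac{(2\pi M\rho)^n}{n!}=e^{2\pi M\rho}$, so dominated convergence applies and yields
$$\widehat{\mu}(z)=\sum_{n=0}^\infty\frac{(2\pi i)^n c_n}{n!}\,z^n.$$
Next I would read off analyticity from the coefficients. The $n$-th coefficient $a_n:=\frac{(2\pi i)^n c_n}{n!}$ obeys $|a_n|\leq\frac{(2\pi M)^n}{n!}$, so $\limsup_n|a_n|^{1/n}=0$ and the radius of convergence is infinite. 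Hence the series defines an entire function agreeing with $\widehat{\mu}$ on all of $\mathbb{C}$, which proves the lemma.

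As an alternative route that avoids moment bookkeeping, one could instead verify continuity of $\widehat{\mu}$ on $\mathbb{C}$ (immediate from dominated convergence, since $|e^{2\pi itz}|\leq e^{2\pi M|z|}$ is bounded on compact $z$-sets) and then apply Morera's theorem: for any triangle $\gamma$, Fubini's theorem --- legitimate because the integrand is bounded on the compact product of $\gamma$ with $K$ --- gives $\oint_\gamma\widehat{\mu}(z)\,dz=\int_K\bigl(\oint_\gamma e^{2\pi itz}\,dz\bigr)\,d\mu(t)=0$, the inner integral vanishing since $z\mapsto e^{2\pi itz}$ is entire. Either way, the only genuine content is the interchange of limit operations, which compact support renders elementary; I do not expect any real obstacle beyond this standard justification.
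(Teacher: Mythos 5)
Your proof is correct, and in fact the paper offers no proof of this lemma at all: it is stated as a standard fact (essentially a baby Paley--Wiener theorem) and used later only to conclude that $\widehat{\nu}$, being entire and not identically zero, has finitely many zeros in $\overline{\mathbb{D}}$. Both of your routes are sound: the power-series argument correctly justifies the interchange by dominating the partial sums $\bigl|\sum_{n\leq N}(2\pi itz)^n/n!\bigr|\leq e^{2\pi M\rho}$ on $|z|\leq\rho$, $|t|\leq M$, and the moment bound $|c_n|\leq M^n$ (valid because $\mu$ is a probability measure) gives infinite radius of convergence; the Morera--Fubini alternative is equally standard and needs nothing beyond the boundedness of $e^{2\pi itz}$ on the compact product. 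Either argument fully establishes the lemma, and your power-series version even yields the slightly stronger conclusion that $\widehat{\mu}$ is entire of exponential type at most $2\pi M$, which is more than the paper needs.
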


The following theorem (Wiener theorem) is a valuable tool to deal with the relationship between the measure and its Fourier transform when its support is contained in $[0,1)$.
Even if the support of the measure we consider is not in this range, we can still skillfully apply this theorem.
\begin{thm}[Wiener theorem, \cite{tiaohefenxi}]\label{thm4.2}
Let $\mu$ be any complex Borel measure on $[0,1)$.
Then
$$
\sum_{x}|\mu(\{x\})|^2=\lim_{N\rightarrow\infty}\frac{1}{2N+1}\sum_{-N}^N|\widehat{\mu}(n)|^2.
$$
\end{thm}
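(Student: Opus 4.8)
The plan is to prove the identity by expanding the squared modulus of each Fourier coefficient as a double integral, interchanging the finite sum with the integration, and then passing to the limit by dominated convergence. First I would write, for each integer $n$,
$$
|\widehat{\mu}(n)|^2=\widehat{\mu}(n)\,\overline{\widehat{\mu}(n)}=\int_{[0,1)}\int_{[0,1)}e^{2\pi i n(x-y)}\,d\mu(x)\,d\overline{\mu}(y),
$$
which is legitimate since $\mu$ is a finite complex measure and the integrand is bounded (here $\overline{\mu}$ denotes the conjugate measure, so that $\overline{\widehat{\mu}(n)}=\int e^{-2\pi i n y}d\overline{\mu}(y)$). Averaging over $-N\le n\le N$ and applying Fubini's theorem pulls the averaging inside the integral, producing the averaged Dirichlet kernel
$$
K_N(t):=\frac{1}{2N+1}\sum_{n=-N}^{N}e^{2\pi i n t}=\frac{\sin\!\big((2N+1)\pi t\big)}{(2N+1)\sin(\pi t)}\quad(t\notin\Z),\qquad K_N(t)=1\ (t\in\Z).
$$

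The heart of the argument is the behaviour of $K_N$. I would record two facts: $|K_N(t)|\le1$ for every $t$, which is immediate from the triangle inequality applied to the defining sum; and $K_N(t)\to\mathbf{1}_{\Z}(t)$ pointwise as $N\to\infty$, since $K_N\equiv1$ at integers while off the integers the numerator stays bounded and the denominator grows linearly. Because $x,y\in[0,1)$ forces $x-y\in(-1,1)$, on the domain of integration the limiting indicator $\mathbf{1}_{\Z}(x-y)$ coincides with $\mathbf{1}_{\{x=y\}}$. With these in hand I would invoke the dominated convergence theorem relative to the finite product measure $|\mu|\times|\mu|$, using the constant $1$ as dominating function, to obtain
$$
\lim_{N\to\infty}\frac{1}{2N+1}\sum_{n=-N}^{N}|\widehat{\mu}(n)|^2=\int_{[0,1)}\int_{[0,1)}\mathbf{1}_{\{x=y\}}\,d\mu(x)\,d\overline{\mu}(y).
$$

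It then remains to evaluate the diagonal integral. Iterating, the inner integral gives $\int_{[0,1)}\mathbf{1}_{\{x=y\}}\,d\mu(x)=\mu(\{y\})$, and integrating this against $\overline{\mu}$ picks up contributions only at the atoms of $\mu$, yielding
$$
\int_{[0,1)}\mu(\{y\})\,d\overline{\mu}(y)=\sum_{y}\mu(\{y\})\,\overline{\mu(\{y\})}=\sum_{y}|\mu(\{y\})|^2,
$$
which is the asserted identity. I expect the main obstacle to be the careful justification of the dominated convergence step — in particular confirming the uniform bound $|K_N|\le1$ and the finiteness of $|\mu|\times|\mu|$ that makes constants integrable — together with the clean reduction of the diagonal integral, where one separates the atomic and continuous parts of $\mu$ and argues that the product of the continuous part with itself assigns no mass to the diagonal.
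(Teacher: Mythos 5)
Your proof is correct. The paper does not prove this statement at all — it imports it from Katznelson's book \cite{tiaohefenxi} as a known result — and your argument (expanding $|\widehat{\mu}(n)|^2$ as a double integral, averaging to get the normalized Dirichlet kernel $K_N$ with $|K_N|\leq 1$ and $K_N\rightarrow \mathbf{1}_{\mathbb{Z}}$ pointwise, applying dominated convergence over the finite measure $|\mu|\times|\mu|$, and evaluating the diagonal integral, where the restriction to $[0,1)$ correctly rules out the endpoint pairing $x-y=\pm1$) is precisely the classical textbook proof of Wiener's theorem, so nothing further is needed.
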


We will apply the above results to prove the following conclusion.
The main idea of the following proof goes back at least as far as Theorem 5.4 in \cite{AFL19}. And we will use some knowledge of complex analysis to get a broader conclusion.

\begin{thm}\label{thm4.3}
Let $\{(p_n, \mathcal{D}_n, L_n)\}$ be a sequence of Hadamard triples with $\sup_n\{p_n^{-1}d_n:d_n\in \mathcal{D}_n\}<\infty$ on $\mathbb{R}$, and $\nu_n$ be given as \eqref{(2.3)}.
Suppose that there exists a subsequence $\{n_k\}$ such that $\nu_{n_k+1}\xrightarrow{w}\nu$, as $k\rightarrow\infty$ and $\sup_k\#\mathcal{D}_{n_k+1}:=N<\infty$.

Then $m_0:=\#\mathcal{Z}(\nu)<\infty$, where $\mathcal{Z}(\cdot)$ is given as Proposition \ref{thm3.6}.
If $\inf_{k}\{p_{n_k+1}\}>m_0N$,
then $\{\nu_{n_k}\}$ is an equi-positive sequence.
In particular, $\mu_{\{p_n,\mathcal{D}_n\}}$ is a spectral measure.
\end{thm}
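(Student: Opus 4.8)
The plan is to reduce the whole statement to the two-factor decomposition of $\widehat{\nu_{n_k}}$ and then to bound each factor from below uniformly in $k$. Abbreviating $p=p_{n_k+1}$ and $\mathcal{D}=\mathcal{D}_{n_k+1}$, and writing an arbitrary integer translate as $x+j=x+\ell+j_0p$ with $\ell\in\{0,1,\dots,p-1\}$ and $j_0\in\mathbb{Z}$, the $\mathbb{Z}$-periodicity of $\widehat{\delta_{\mathcal{D}}}$ gives
\begin{equation*}
\widehat{\nu_{n_k}}(x+\ell+j_0p)=\widehat{\delta_{\mathcal{D}}}\!\left(\tfrac{x+\ell}{p}\right)\,\widehat{\nu_{n_k+1}}\!\left(j_0+\tfrac{x+\ell}{p}\right).
\end{equation*}
Thus, for each $x\in[0,1)$, producing the constant $\epsilon_0$ of Definition~\ref{de2.2} amounts to exhibiting one grid point $y_\ell:=(x+\ell)/p$ at which the first factor is bounded below and for which some integer shift $j_0+y_\ell$ keeps the second factor bounded below, all bounds being independent of $k$.

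First I would prove $m_0=\#\mathcal{Z}(\nu)<\infty$. Since $\sup_n\{p_n^{-1}d:d\in\mathcal{D}_n\}<\infty$, every $\nu_n$, and hence $\nu$, is supported in a fixed compact interval $[0,M]$; by Lemma~\ref{thm4.1} the function $\widehat{\nu}$ is therefore entire of exponential type at most $2\pi M$ and bounded by $1$ on $\mathbb{R}$. Its real zeros consequently have upper density at most $2M$, so at most finitely many cosets $\xi+\mathbb{Z}$ with $\xi\in[0,1)$ can consist entirely of zeros of $\widehat{\nu}$, which is precisely the assertion $m_0\le 2M<\infty$. The route suggested by the paper is equivalent: wrapping $e^{2\pi i\xi t}\,d\nu(t)$ to a measure on $[0,1)$ and applying Wiener's Theorem~\ref{thm4.2} shows that each $\xi\in\mathcal{Z}(\nu)$ forces a circle measure to be atomless, after which the structure of $\nu$ reduces $\mathcal{Z}(\nu)$ to the common zero set of finitely many trigonometric polynomials, and the Argument Principle supplies the quantitative bound.

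The heart of the matter, and where the hypothesis $\inf_k p_{n_k+1}>m_0N$ enters, is the lower bound. For the first factor I would combine the Hadamard relation Lemma~\ref{thm3.1}(i), which gives $\sum_{\ell\in L}|\widehat{\delta_{\mathcal{D}}}(y_\ell)|^2=1$ with $\#L=\#\mathcal{D}\le N$, with the full-grid identity $\sum_{\ell=0}^{p-1}|\widehat{\delta_{\mathcal{D}}}(y_\ell)|^2=p/\#\mathcal{D}$ (valid because the Hadamard condition forces the digits of $\mathcal{D}$ to be distinct modulo $p$); an averaging argument then yields at least a fixed proportion of order $p/N$ of indices $\ell$ at which $|\widehat{\delta_{\mathcal{D}}}(y_\ell)|$ exceeds a constant depending only on $N$. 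For the second factor, fix $\delta>0$; since $\nu_{n_k+1}\xrightarrow{w}\nu$, Proposition~\ref{thm3.6} furnishes $c_\delta>0$ and an index $K$ so that, for all $k\ge K$ and every $y\in[0,1]\setminus B(\mathcal{Z}(\nu);\delta)$, some shift $j_0$ achieves $|\widehat{\nu_{n_k+1}}(j_0+y)|\ge c_\delta$. It then suffices to locate a factor-$1$-good grid point $y_\ell$ lying outside $B(\mathcal{Z}(\nu);\delta)$: the bad neighbourhood is a union of $m_0$ intervals of length $2\delta$ and so meets at most $m_0(2\delta p+2)$ of the $p$ equally spaced grid points, whereas the factor-$1$-good points number at least a constant multiple of $p/N$. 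Choosing $\delta$ small relative to $N$ and invoking $p>m_0N$ guarantees a surviving index $\ell$, and the product of the two lower bounds is the desired $\epsilon_0$. This counting step — matching the Hadamard-controlled supply of factor-$1$-good grid points against the $m_0$ obstructions coming from the periodic zero set of $\nu$, while keeping $\delta$ fixed as $p_{n_k+1}$ varies — is the part I expect to be the main obstacle, since it is exactly where the sharp threshold $m_0N$ and the uniformity in $k$ must be extracted (a crude balance gives only $p\gtrsim m_0N$ with an extra absolute constant, so the stated threshold requires the more careful count).

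Finally, the estimate of the previous paragraph holds for all $k\ge K$, so passing to the subsequence $\{\nu_{n_k}\}_{k\ge K}$ yields a family that is equi-positive in the sense of Definition~\ref{de2.2}, with supports contained in the common compact set $[0,M]$. Theorem~\ref{thmxb} then immediately gives that $\mu_{\{p_n,\mathcal{D}_n\}}$ is a spectral measure, completing the proof.
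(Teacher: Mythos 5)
Your proposal follows essentially the paper's own route: the same factorization $\widehat{\nu_{n_k}}(x+\ell+j_0p)=\widehat{\delta_{\mathcal{D}}}\bigl(\frac{x+\ell}{p}\bigr)\widehat{\nu_{n_k+1}}\bigl(j_0+\frac{x+\ell}{p}\bigr)$, finiteness of $m_0$ from analyticity of $\widehat{\nu}$ (the paper only needs that the zero set of the nontrivial entire function $\widehat{\nu}$ in the compact set $[0,1]$ is finite; your exponential-type zero-density bound is correct but heavier than required), the grid identity $\sum_{\ell=0}^{p-1}|\widehat{\delta_{\mathcal{D}}}((\xi+\ell)/p)|^2=p/\#\mathcal{D}$, and Proposition \ref{thm3.6} for the second factor, finishing with Theorem \ref{thmxb}. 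One genuine simplification on your side: you derive the grid identity directly from the fact that a Hadamard triple forces the digits of $\mathcal{D}$ to be distinct modulo $p$ (correct: congruent digits would give equal columns of the Hadamard matrix), whereas the paper obtains the very same identity by building an auxiliary complex measure on $[0,1)$ and invoking Wiener's Theorem \ref{thm4.2}; nothing is lost by your shortcut.

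The one genuine gap is exactly the step you flag: your counting argument (at least roughly $p/(2N)$ grid points with $|\widehat{\delta_{\mathcal{D}}}|^2$ above a threshold, versus at most $m_0(2\delta p+2)$ grid points in the bad neighbourhood) only closes under $p>Cm_0N$ with an extra absolute constant $C>1$, while the theorem asserts the sharp hypothesis $\inf_k p_{n_k+1}>m_0N$, so as written your proof establishes only a weakened statement. The paper avoids counting good points altogether: splitting the grid identity along $B:=\bigcup_{z\in\mathcal{Z}(\nu)}B(z;\delta)$ and its complement $A$ gives
\begin{equation*}
\frac{1}{\#\mathcal{D}}\;\leq\;\frac{1}{p}\#\Bigl\{\ell:\tfrac{\xi+\ell}{p}\in B\Bigr\}+\max_{\frac{\xi+\ell}{p}\in A}\Bigl|\widehat{\delta_{p^{-1}\mathcal{D}}}(\xi+\ell)\Bigr|^2\;\leq\;2m_0\delta+\frac{m_0}{p}+\max_{\frac{\xi+\ell}{p}\in A}\Bigl|\widehat{\delta_{p^{-1}\mathcal{D}}}(\xi+\ell)\Bigr|^2,
\end{equation*}
so the \emph{maximum} over good grid points is at least $\frac{1}{N}-\frac{m_0}{p}-2m_0\delta$, which is bounded below by $2m_0\delta>0$ once one fixes $\delta\leq\frac{1}{4m_0}\bigl(\frac{1}{N}-\frac{m_0}{\inf_k p_{n_k+1}}\bigr)$ — a choice that is positive precisely because $\inf_k p_{n_k+1}>m_0N$, and is uniform in $k$. (Alternatively, your count can be repaired within your own framework: an interval of length $2\delta$ meets at most $2\delta p+1$, not $2\delta p+2$, points of a $1/p$-spaced grid, and since the $p_n$, $m_0$, $N$ are integers the hypothesis gives $p_{n_k+1}\geq m_0N+1$, so $\frac{m_0}{p}\leq\frac{1}{N+1/m_0}<\frac{1}{N}$ and small enough $c,\delta$ make the balance close at the stated threshold; but the max-versus-average estimate above is cleaner and is what the paper does.) With that single estimate replaced, the remainder of your argument — the uniform constant from Proposition \ref{thm3.6} for the second factor, passing to the tail $k\geq K$, and concluding spectrality via Theorem \ref{thmxb} — matches the paper and completes the proof.
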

\begin{proof}
Let $K$ be a compact set with $\text{spt}\nu_{n}\subset K$ for all $n\geq1$.
It follows from Lemma \ref{thm3.5} that
$$
1=\underline{\lim}_{k\rightarrow\infty}\nu_{n_k+1}(\mathbb{R})\geq \nu(\mathbb{R})\geq\nu(K)\geq\overline{\lim}_{k\rightarrow\infty}\nu_{n_k+1}(K)=1.
$$
This implies that $\nu$ is a probability measure.
Hence, Lemma \ref{thm4.1} gives that $\widehat{\nu}(z)$ is analytic in $\mathbb{C}$ and $\#\{z\in\overline{\mathbb{D}}:\widehat{\nu}(z)=0\}<\infty$,
where $\mathbb{D}:=\{z\in\mathbb{C}:|z|<1\}$ is a unit disk.
So $m_0<\infty$.

Without loss of generality, we assume that $m_0\geq1$.
For any small $\delta>0$, using Proposition \ref{thm3.6}, we can find a small constant $\epsilon_\delta>0$ and an integer $k_x$ such that
$$
|\widehat{\nu}(x+k_x)|\geq\epsilon_\delta,\;\;\forall \;x\in[0,1]\setminus(\cup_{z\in\mathcal{Z}(\nu)}B(z;\delta)):=[0,1]\setminus B:=A.
$$
By $(iv)$ of Lemma \ref{thm3.5}, for any $p_{n_k+1}^{-1}x\in A$, there exist $K_0$ and $k=k_{p_{n_k+1}^{-1}x}$ such that
\begin{equation}\label{(4.1)}
|\widehat{\nu_{n_k}}(x+p_{n_k+1}k)|=
|\widehat{\delta_{p_{n_k+1}^{-1}\mathcal{D}_{n_k+1}}}(x)||\widehat{\nu_{n_k+1}}(p_{n_k+1}^{-1}x+k)|
\geq\frac{1}{2}|\widehat{\delta_{p_{n_k+1}^{-1}\mathcal{D}_{n_k+1}}}(x)|\epsilon_\delta,
\end{equation}
whenever $k\geq K_0$.

In the following, our goal is to find suitable integer $\ell$ such that $\frac{x+\ell}{p_{n_k+1}}\in A$ and $$\inf_{x\in[0,1)}|\widehat{\delta_{p_{n_k+1}^{-1}\mathcal{D}_{n_k+1}}}(x+\ell)|>0$$ by Wiener theorem.
Because $(p_{n_k+1},\mathcal{D}_{n_k+1}, L_{n_k+1})$ is a Hadamard triple, we get a new digit set
$$
\mathcal{D}_{n_k+1}'=\mathcal{D}_{n_k+1}\pmod{p_{n_k+1}}
$$
satisfying $\mathcal{D}_{n_k+1}'\subset\{0,1,\cdots,p_{n_k+1}-1\}$.
Write $m=\#\mathcal{D}_{n_k+1}$ and the elements of $\mathcal{D}_{n_k+1}'$ as $\{d_{n_k+1,j}'\}_{j=1}^{m}$.
Denote a discrete measure
$$
\omega_k(x)=\begin{cases}
\frac1m, & x=p_{n_k+1}^{-1}d_{n_k+1,j}';
\\
0, & \text{otherwise}.
\end{cases}
$$
For any $\xi\in[0,1)$, define the complex measure
$$
\omega_k'(E):=\int_E f(x)d\omega_k(x),
$$
where
$$
f(x)=\begin{cases}
e^{2\pi i p_{n_k+1}^{-1}d_{n_k+1,j}\xi}, & x=p_{n_k+1}^{-1}d_{n_k+1,j}'
\\
0, & \text{otherwise}.
\end{cases}
$$
Hence
$$
\widehat{\omega_k'}(y)=\frac1m\sum_{j=1}^{m}e^{2\pi ip_{n_k+1}^{-1}(yd_{n_k+1,j}'+d_{n_k+1,j}\xi)}=\widehat{\delta_{p_{n_k+1}^{-1}\mathcal{D}_{n_k+1}}}(y+\xi)
$$
whenever $y\in\mathbb{Z}$.
Since $\omega'_k$ is a complex Borel measure on $[0,1)$, applying Wiener theorem, one has
\begin{align*}
\frac{1}{\#\mathcal{D}_{n_k+1}}=\sum_{d\in \mathcal{D}_{n_k+1}'}|\omega_k'(\{\frac{d}{p_{n_k+1}}\})|^2&=
\lim_{\ell\rightarrow\infty}\frac{1}{2\ell p_{n_k+1}+1}\sum_{l=-\ell p_{n_k+1}}^{\ell p_{n_k+1}}|\widehat{\omega_k'}(l)|^2
\\
&=\frac{1}{p_{n_k+1}}\sum_{l=0}^{p_{n_k+1}-1}|\widehat{\delta_{p_{n_k+1}^{-1}\mathcal{D}_{n_k+1}}}(\xi+l)|^2.
\end{align*}
Divide the above formula into
\begin{align*}
\frac{1}{\#\mathcal{D}_{n_k+1}}&=\frac{1}{p_{n_k+1}}\sum_{\frac{\xi+l}{p_{n_k+1}}\in B}|\widehat{\delta_{p_{n_k+1}^{-1}\mathcal{D}_{n_k+1}}}(\xi+l)|^2+\frac{1}{p_{n_k+1}}\sum_{\frac{\xi+l}{p_{n_k+1}}
\in A}|\widehat{\delta_{p_{n_k+1}^{-1}\mathcal{D}_{n_k+1}}}(\xi+l)|^2
\\
&\leq \frac{1}{p_{n_k+1}}\#\{l:\frac{\xi+l}{p_{n_k+1}}\in B\}+\max_{\frac{\xi+l}{p_{n_k+1}}\in A}|\widehat{\delta_{p_{n_k+1}^{-1}\mathcal{D}_{n_k+1}}}(\xi+l)|^2
\\
&\leq 2m_0\delta+\frac{m_0}{p_{n_k+1}}+\max_{\frac{\xi+l}{p_{n_k+1}}\in A}|\widehat{\delta_{p_{n_k+1}^{-1}\mathcal{D}_{n_k+1}}}(\xi+l)|^2.
\end{align*}
Taking $\delta\leq\frac{1}{4m_0}(\frac{1}{N}-\frac{m_0}{p_{n_k}+1})$, then
$$
\max_{\frac{\xi+l}{p_{n_k}}\in A}|\widehat{\delta_{p_{n_k}^{-1}\mathcal{D}_{n_k}}}(\xi+l)|^2\geq2m_0\delta.
$$
It follows from \eqref{(4.1)} that
$$
|\widehat{\nu_{n_k}}(\xi+\ell)|\geq m_0\delta\epsilon_\delta
$$
for some $0\leq \ell\leq p_{n_k+1}-1$.
Hence $\{\nu_{n_k}\}$ is an equi-positive sequence, and $\mu_{\{p_n,\mathcal{D}_n\}}$ is a spectral measure by Theorem \ref{thmxb}.
\end{proof}
If we add the condition $\sup_n\#\mathcal{D}_n<\infty$ to the above theorem, from Theorem \ref{thm3.4} (weak compactness theorem), then we get the following corollary(by choosing $p_{n_k+1}\rightarrow\infty$, as $k\rightarrow\infty$).
\begin{cor}\label{thm4.4}
Let $\{(p_n, \mathcal{D}_n, L_n)\}$ be a sequence of Hadamard triples with $\sup_n\{p_n^{-1}d_n:d_n\in \mathcal{D}_n\}<\infty$ and $\sup_n\#\mathcal{D}_n<\infty.$
If $\sup_n\{p_n\}=\infty$,
then $\mu_{\{p_n,\mathcal{D}_n\}}$ is a spectral measure.
\end{cor}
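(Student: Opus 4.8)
The plan is to reduce the corollary to Theorem \ref{thm4.3} by manufacturing a weakly convergent subsequence of $\{\nu_{n+1}\}$ along which $p_{n+1}$ diverges. First, since $\sup_n p_n=\infty$, I would pick indices $n_k$ so that $p_{n_k+1}\to\infty$ as $k\to\infty$. The hypothesis $\sup_n\{p_n^{-1}d:d\in\mathcal{D}_n\}<\infty$ guarantees (as noted after Definition \ref{de2.3}) a single compact set $T$ containing the support of every $\nu_n$; since each $\nu_n$ is a probability measure, the family $\{\nu_{n_k+1}\}$ has uniformly bounded mass on bounded sets, so Theorem \ref{thm3.4} lets me pass to a further subsequence (still written $\{n_k\}$) with $\nu_{n_k+1}\xrightarrow{w}\nu$ for some $\nu\in\mathcal{P}(T)$. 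Passing to this subsequence does not destroy $p_{n_k+1}\to\infty$.

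Next I would feed this data into Theorem \ref{thm4.3}. The assumption $\sup_n\#\mathcal{D}_n<\infty$ gives $\sup_k\#\mathcal{D}_{n_k+1}\le N:=\sup_n\#\mathcal{D}_n<\infty$, so Theorem \ref{thm4.3} applies and yields $m_0:=\#\mathcal{Z}(\nu)<\infty$. The remaining hypothesis of that theorem is $\inf_k p_{n_k+1}>m_0N$, which I would secure by discarding an initial segment of the subsequence. Indeed, because $p_{n_k+1}\to\infty$, there is a $K$ with $p_{n_k+1}>m_0N$ for all $k\ge K$, and the tail $\{n_k\}_{k\ge K}$ still converges weakly to the same $\nu$, so $m_0$ (being determined by $\nu$ alone) is unchanged. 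For this tail one has $\inf_k p_{n_k+1}>m_0N$, whence Theorem \ref{thm4.3} shows $\{\nu_{n_k}\}$ is equi-positive and, by Theorem \ref{thmxb}, $\mu_{\{p_n,\mathcal{D}_n\}}$ is a spectral measure.

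The only delicate point is the order of quantifiers: the threshold $m_0N$ appearing in Theorem \ref{thm4.3} depends on the limit measure $\nu$, which is only available after the weakly convergent subsequence has been extracted, so one cannot impose $\inf_k p_{n_k+1}>m_0N$ at the outset. The divergence $p_{n_k+1}\to\infty$ resolves this apparent circularity: it is a property stable under passing to tails, while the weak limit (and hence $m_0$) is invariant under the same operation, so both requirements can be arranged simultaneously on a single tail subsequence. I expect this bookkeeping, rather than any new estimate, to be the substance of the argument, since the analytic content has already been absorbed into Theorem \ref{thm4.3}.
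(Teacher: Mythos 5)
Your proposal is correct and follows exactly the route the paper intends: the paper derives Corollary \ref{thm4.4} in a single remark by choosing a subsequence with $p_{n_k+1}\rightarrow\infty$, extracting a weak limit via Theorem \ref{thm3.4}, and invoking Theorem \ref{thm4.3}. Your explicit handling of the quantifier issue (passing to a tail after $m_0$ is determined by the fixed weak limit $\nu$) is precisely the bookkeeping the paper leaves implicit.
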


\subsection{\emph{(ii) $\Rightarrow$ (i) when } \textit{${\sup_n\{d:d\in \mathcal{D}_n\}<\infty}$}\label{sect4.2}}
\

In this subsection, by using the diagonal principle, we will find the weak limit of $\{\nu_n\}$. And, by using some methods and techniques of complex analysis, we will prove our main results.

The following theorem is the Uniqueness part.
\begin{thm}\label{thm4.5}(Uniqueness Theorem)\cite{pro}
The Fourier transform $\widehat{\mu}$ of a probability measure $\mu$ on $\mathbb{R}^n$ characterizes $\mu$. That is, if two probabilities on $\mathbb{R}^n$ admit the same Fourier transform, then they are the same.
\end{thm}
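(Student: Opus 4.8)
The plan is to reduce the claim to the assertion that a finite signed (or complex) Borel measure whose Fourier transform vanishes identically must be the zero measure. Given two probability measures $\mu,\nu$ on $\mathbb{R}^n$ with $\widehat{\mu}\equiv\widehat{\nu}$, I would set $\sigma:=\mu-\nu$, a finite signed measure satisfying $\widehat{\sigma}\equiv0$ by linearity of the Fourier transform; it then suffices to prove $\sigma=0$, whence $\mu=\nu$.

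The central identity is the exchange (multiplication) formula. For any Schwartz function $\phi$ on $\mathbb{R}^n$, the finiteness of $\sigma$ and the integrability of $\phi$ justify Fubini's theorem, giving
\[
\int_{\mathbb{R}^n}\widehat{\sigma}(\xi)\,\phi(\xi)\,d\xi=\int_{\mathbb{R}^n}\Big(\int_{\mathbb{R}^n}e^{2\pi i\langle x,\xi\rangle}\phi(\xi)\,d\xi\Big)\,d\sigma(x)=\int_{\mathbb{R}^n}\psi(x)\,d\sigma(x),
\]
where $\psi(x):=\int_{\mathbb{R}^n}e^{2\pi i\langle x,\xi\rangle}\phi(\xi)\,d\xi$. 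Since $\widehat{\sigma}\equiv0$, the left-hand side vanishes, so $\int\psi\,d\sigma=0$ for every such $\psi$.

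Next I would exploit that the Fourier transform is a bijection of the Schwartz space $\mathcal{S}(\mathbb{R}^n)$ onto itself: as $\phi$ runs over $\mathcal{S}(\mathbb{R}^n)$, the associated $\psi$ runs over all of $\mathcal{S}(\mathbb{R}^n)$ as well. Hence $\int\psi\,d\sigma=0$ for every Schwartz function $\psi$. Because $\mathcal{S}(\mathbb{R}^n)$ is dense in $C_0(\mathbb{R}^n)$ in the uniform norm and $\sigma$ has finite total variation, this upgrades to $\int f\,d\sigma=0$ for all $f\in C_0(\mathbb{R}^n)$. The Riesz representation theorem then identifies the zero functional on $C_0(\mathbb{R}^n)$ with the zero measure, yielding $\sigma=0$ and therefore $\mu=\nu$.

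The only nonroutine point---the main obstacle---is the passage from equality of integrals against a dense family of test functions to equality of the measures themselves; this rests on checking that the test class is dense in the correct topology and that $\|\sigma\|$ is finite so the approximation is uniform, after which Riesz representation closes the argument. As a backup I would keep the Gaussian-mollifier route: convolving $\sigma$ with the heat kernel $g_t$ produces an $L^1$ density whose Fourier transform is $\widehat{\sigma}$ times a Gaussian, hence identically $0$, so $\sigma\ast g_t\equiv0$ for every $t>0$; letting $t\to0^+$ gives $\sigma\ast g_t\xrightarrow{w}\sigma$ and thus $\sigma=0$.
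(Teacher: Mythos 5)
Your proposal is correct. Note first that the paper does not prove this statement at all: it is quoted as a known result from the cited textbook \cite{pro}, so there is no internal proof to compare against. Measured against the standard proofs, your primary route (set $\sigma=\mu-\nu$, apply the multiplication formula $\int\widehat{\sigma}\,\phi=\int\check{\phi}\,d\sigma$ via Fubini, use that the Fourier transform is a bijection of $\mathcal{S}(\mathbb{R}^n)$, upgrade by density of $\mathcal{S}(\mathbb{R}^n)$ in $C_0(\mathbb{R}^n)$ and finiteness of $\|\sigma\|_{TV}$, then invoke Riesz representation) is complete and rigorous; the only point deserving explicit mention is that Riesz representation identifies functionals on $C_0(\mathbb{R}^n)$ with finite \emph{regular} Borel measures, and this is harmless here because every finite Borel measure on $\mathbb{R}^n$ is Radon. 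Your backup route is in fact essentially the proof given in the cited reference: convolving with the heat kernel $g_t$ yields an $L^1$ (indeed continuous bounded) density whose Fourier transform is $\widehat{\sigma}\cdot\widehat{g_t}\equiv0$, so $L^1$-inversion gives $\sigma\ast g_t=0$, and letting $t\to0^+$ recovers $\sigma=0$ by weak convergence. The trade-off between the two is real: the Schwartz-duality argument is structurally cleaner and generalizes immediately to tempered distributions, while the Gaussian-mollifier argument is more self-contained (it avoids both the Schwartz-space bijection theorem and Riesz representation), which is why probability texts such as \cite{pro} prefer it. Either version fully establishes the theorem as used in the paper.
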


According to Argument Principle, we conclude the following simple but practical result.
Let $\mathbb{D}=\{z\in\mathbb{C}:|z|<1\}$ be the unit disc.
\begin{lem}\label{thm4.6}
Let $\{f_n(z)\}$ be a sequence of complex analytic functions in $\mathbb{C}$, and suppose that, as $n\rightarrow\infty$, $f_n(z)\rightarrow f(z)$ uniformly on each compact subset of $\mathbb{C}$.

Denote $\mathbf{z}(f):=\{z\in\overline{\mathbb{D}}:f(z)=0\}$.
If $\#\mathbf{z}(f_n)\rightarrow\infty$ as $n\rightarrow\infty$, then $f(z)\equiv0$.
\end{lem}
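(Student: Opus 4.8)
The plan is to argue by contradiction using the locally uniform convergence together with Hurwitz-type reasoning. Suppose $f(z)\not\equiv 0$. Since $f$ is the locally uniform limit of the entire functions $f_n$, it is itself entire (by the standard Weierstrass theorem for locally uniformly convergent sequences of analytic functions). The key idea is that the zeros of an analytic function that is not identically zero are isolated, so on the \emph{closed} disc $\overline{\mathbb D}$ there can be only finitely many of them; the obstacle is to show that the count $\#\mathbf z(f_n)$ cannot blow up if $\#\mathbf z(f)$ is finite. To make the zero-counting rigorous I would work on a slightly enlarged circle, not on $\partial\mathbb D$ itself, to avoid zeros sitting on the boundary.

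First I would choose a radius $r>1$ with $r$ close enough to $1$ that $f$ has no zero in the annulus $1\le |z|\le r$ (possible because the finitely many zeros of $f$ in a compact neighbourhood of $\overline{\mathbb D}$ are isolated) and, in particular, no zero on the circle $|z|=r$. Let $\gamma=\{|z|=r\}$. Then $\delta:=\min_{z\in\gamma}|f(z)|>0$ by compactness of $\gamma$ and continuity of $|f|$. Since $f_n\to f$ uniformly on $\gamma$, for all large $n$ we have $|f_n(z)-f(z)|<\delta\le |f(z)|$ on $\gamma$, so by Rouch\'e's theorem $f_n$ and $f$ have the same number of zeros, counted with multiplicity, inside $\gamma$, i.e.\ inside the disc $|z|<r$.

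Denote by $M$ the (finite) number of zeros of $f$ inside $|z|<r$. By the Rouch\'e step, $f_n$ has exactly $M$ zeros inside $|z|<r$ for all large $n$. Since $\mathbf z(f_n)=\{z\in\overline{\mathbb D}:f_n(z)=0\}\subset\{z:|z|<r\}$, we obtain $\#\mathbf z(f_n)\le M$ for all large $n$, which contradicts the hypothesis $\#\mathbf z(f_n)\to\infty$. Hence the assumption $f\not\equiv0$ is untenable, and we conclude $f(z)\equiv 0$. I expect the only genuinely delicate point to be the selection of the radius $r$ and the justification that $f$ has only finitely many zeros in a neighbourhood of $\overline{\mathbb D}$; once a zero-free circle $\gamma$ is fixed, the Rouch\'e argument (an instance of the Argument Principle, as the statement advertises) closes the proof immediately.
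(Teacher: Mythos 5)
Your proof is correct, and it takes a noticeably more economical route than the paper's. The paper also argues by contradiction via the Argument Principle, but it localizes: assuming $f\not\equiv0$, it surrounds each of the finitely many zeros $z_1,\dots,z_m\in\overline{\mathbb{D}}$ of $f$ by a small ball, uses uniform convergence to force $\mathbf{z}(f_n)\subset\cup_{i=1}^m B(z_i;r)$ for large $n$, extracts by pigeonhole a subsequence along which one fixed ball $B(z_{i_0};r)$ contains unboundedly many zeros of $f_{n_k}$, and then applies the Argument Principle on the circle $\partial B(z_{i_0};r)$ to conclude that $f$ would have infinitely many zeros there. You instead enclose the whole of $\overline{\mathbb{D}}$ in a single zero-free circle $|z|=r$ with $r>1$ and apply Rouch\'e once, getting the uniform bound $\#\mathbf{z}(f_n)\leq M$ for large $n$ directly; this avoids both the pigeonhole/subsequence step and the limit interchange $\int_C f_{n_k}'/f_{n_k}\,dz\to\int_C f'/f\,dz$ that the paper needs (which in turn requires $f_{n_k}'\to f'$ uniformly on $C$, via Cauchy estimates). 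Your version also handles zeros of $f$ sitting on $\partial\mathbb{D}$ more gracefully than the paper's requirement $B(z_i;r)\subset\mathbb{D}$, which is strictly speaking impossible when $|z_i|=1$. One cosmetic slip: if $f$ vanishes somewhere on $|z|=1$, no $r>1$ makes the closed annulus $1\leq|z|\leq r$ zero-free; but your argument never needs that --- it only needs the circle $|z|=r$ itself to avoid the finitely many moduli of zeros of $f$ in, say, $\{|z|\leq 2\}$, which is exactly what you use, so the proof stands as written.
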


\begin{proof}
If $f\not\equiv0$, then we can assume that $\mathbf{z}(f)=\{z_1,z_2,\cdots,z_m\}\subset\overline{\mathbb{D}}$.

Note that $f_n(z)\rightarrow f(z)$ uniformly on $\overline{\mathbb{D}}$ as $n\rightarrow\infty$. Choose disjoint small balls $B(z_i;r):=\{y:|y-z_i|<r\}\subset\Bbb D$.
By continuity of $f(z)$, there exists $\varepsilon>0$ such that $|f(z)|\geq\varepsilon$ for $z\in\Bbb D\setminus \cup_{i=1}^m B(z_i;r)$.
Recall that $f_n(z)\rightarrow f(z)$ for any $z\in\overline{\Bbb{D}}$.
Then there exists $N$ such that $|f(z)-f_n(z)|<\frac12\varepsilon$ for any $z\in\Bbb D$ and $n\geq N$.
Therefore
$$
|f_n(z)|\geq |f(z)|-|f(z)-f_n(z)|\geq \frac12\varepsilon>0
$$
for any $z\in\Bbb D\setminus \cup_{i=1}^m B(z_i;r)$, i.e., $\mathbf{z}(f_n)\subset \cup_{i=1}^m B(z_i;r)$ for all $n\geq N$.

Assuming that $\#(\mathbf{z}(f_n)\cap B(z_{i_n};r))=\max_i\{\#(\mathbf{z}(f_n)\cap B(z_{i};r))\}$ for some $1\leq i_n\leq m$, there exists $i_0\in\{1,2,\cdots,m\}$ such that $\#\{n\geq1:i_n=i_0\}=\infty$.
Hence, we can choose a subsequence $\{n_k\}$ such that
$$\#(\mathbf{z}(f_{n_k})\cap B(z_{i_0};r))\geq\#\mathbf{z}(f_{n_k})/m\rightarrow\infty,\;\text{as}\;k\rightarrow\infty.$$
Let $\mathbf{z}'(f_{n_k})=\mathbf{z}(f_{n_k})\cap B(z_{i_0};r)$ and $C=\{z:|z-z_{i_0}|=r\}$.
From Argument Principle, we have
$$\begin{aligned}\#\mathbf{z}'(f)=\frac{1}{2\pi}\int_{C}\frac{f'}{f}dz=\frac{1}{2\pi}\int_{C}\lim_{k\rightarrow\infty}\frac{f_{n_k}'}{f_{n_k}}dz
=\lim_{k\rightarrow\infty}\#\mathbf{z}'(f_{n_k})=\infty,
\end{aligned}$$
a contradiction.
\end{proof}

Recall that
$$
\mu_{\{p_n,\mathcal{D}_n\}}:=\delta_{p_1^{-1}\mathcal{D}_1}\ast\delta_{(p_1p_2)^{-1}\mathcal{D}_2}\ast\cdots
$$
and
$$
\nu_n=\delta_{p_{n+1}^{-1}\mathcal{D}_{n+1}}\ast\delta_{(p_{n+2}p_{n+1})^{-1}\mathcal{D}_{n+2}}\ast\cdots.
$$
We will use the above lemma to obtain the following conclusion.

\begin{thm}\label{thm4.7}
As the above notions, let $\{(p_n, \mathcal{D}_n, L_n)\}$ be a sequence of Hadamard triples with $\sup_n\{p_n^{-1}d_n:d_n\in \mathcal{D}_n\}<\infty$.
If the associated Cantor-Moran measure $\mu_{\{p_n,\mathcal{D}_n\}}$ satisfies $\limsup_{k\rightarrow\infty}T_{p_k,\mathcal{D}_k}\supset(0,1)$, then $\mathcal{Z}(\nu_n)=\emptyset$ for any $n\geq1$.

If there exists a subsequence $\{n_k\}$ such that $\nu_{n_k}\xrightarrow{w}\nu$ with $\mathcal{Z}(\nu)=\emptyset$,
then the sequence is also an equi-positive sequence. In particular, $\mu_{\{p_n,\mathcal{D}_n\}}$ is a spectral measure.
\end{thm}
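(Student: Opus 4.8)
The plan is to separate the statement into its two implications and to prove them by quite different means: the second (that $\mathcal{Z}(\nu)=\emptyset$ forces equi-positivity) is essentially immediate from the machinery of Section \ref{sect.3}, while the first (that condition (ii) forces $\mathcal{Z}(\nu_n)=\emptyset$ for every $n$) is the substantive part and will rest on a counting/growth argument for the finite sets $\mathcal{Z}(\nu_m)$. I would dispose of the second implication first: since $\mathcal{Z}(\nu)=\emptyset$, the exceptional neighborhood $B(\mathcal{Z}(\nu);\delta)$ is empty, so $[0,1]\setminus B(\mathcal{Z}(\nu);\delta)=[0,1]$ and Proposition \ref{thm3.6} applied to $\nu_{n_k}\xrightarrow{w}\nu$ produces an integer $N$ and an $\epsilon_0>0$ with $\inf_{x\in[0,1)}\sup_{k\in\mathbb{Z}}|\widehat{\nu_{n_k}}(x+k)|\geq\epsilon_0$ for all large $k$. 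This is exactly equi-positivity of the tail of $\{\nu_{n_k}\}$ in the sense of Definition \ref{de2.2}, and Theorem \ref{thmxb} then yields spectrality of $\mu_{\{p_n,\mathcal{D}_n\}}$.

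The engine for the first implication is a one-step propagation identity. Writing $\widehat{\nu_m}(\xi)=\widehat{\delta_{p_{m+1}^{-1}\mathcal{D}_{m+1}}}(\xi)\,\widehat{\nu_{m+1}}(p_{m+1}^{-1}\xi)$, and noting that $\widehat{\delta_{p_{m+1}^{-1}\mathcal{D}_{m+1}}}$ has period $p_{m+1}$, I would decompose each integer as $k=p_{m+1}q+\ell$ with $\ell\in\{0,\dots,p_{m+1}-1\}$. Then $\xi_0\in\mathcal{Z}(\nu_m)$ forces, for every $\ell$ with $\widehat{\delta_{p_{m+1}^{-1}\mathcal{D}_{m+1}}}(\xi_0+\ell)\neq0$, the point $(\xi_0+\ell)/p_{m+1}\in[0,1)$ to lie in $\mathcal{Z}(\nu_{m+1})$. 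Since $(\xi_0,\ell)\mapsto(\xi_0+\ell)/p_{m+1}$ is injective, and since Lemma \ref{thm3.1}(i) guarantees at least one valid $\ell$ while the definition of the DPCS guarantees at least two whenever $\xi_0\in T_{p_{m+1},\mathcal{D}_{m+1}}$, I obtain the counting inequality $\#\mathcal{Z}(\nu_{m+1})\geq \#\mathcal{Z}(\nu_m)+\#\big(\mathcal{Z}(\nu_m)\cap T_{p_{m+1},\mathcal{D}_{m+1}}\big)$. In particular $\#\mathcal{Z}(\nu_m)$ is non-decreasing, $\mathcal{Z}(\nu_{m+1})=\emptyset$ implies $\mathcal{Z}(\nu_m)=\emptyset$, and each $\mathcal{Z}(\nu_m)$ is finite because $\widehat{\nu_m}$ is entire and not identically zero (Lemma \ref{thm4.1}). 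I would then assume for contradiction that $\mathcal{Z}(\nu_N)\neq\emptyset$ and replace (ii) by the equivalent (iii) already established in Section \ref{sect.3}.

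In \textbf{Case A} (infinitely many indices with $p_j>\#\mathcal{D}_j$, i.e.\ infinitely many DPC levels by Lemma \ref{thm3.2}), at each such level $T_{p_j,\mathcal{D}_j}=[0,1)$ contains all the necessarily nonzero points of $\mathcal{Z}(\nu_{j-1})$, so the counting inequality is strict there; hence $\#\mathcal{Z}(\nu_{N+j})\to\infty$. I then extract a weakly convergent subsequence (Theorem \ref{thm3.4}), upgrade Lemma \ref{thm3.5}(iv) to uniform convergence of $\widehat{\nu_{n_k}}$ on compact subsets of $\mathbb{C}$ (the $\widehat{\nu_m}$ are entire and uniformly bounded on disks because all supports lie in a fixed compact set, so Vitali's theorem applies), and invoke Lemma \ref{thm4.6} to conclude $\widehat{\nu}\equiv0$, contradicting $\widehat{\nu}(0)=1$. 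In \textbf{Case B} (Case A failing, so $p_j=\#\mathcal{D}_j$ eventually, together with $\gcd_{j\geq n}\{\gcd\mathcal{D}_j\}\equiv1$), the non-decreasing integer sequence $\#\mathcal{Z}(\nu_m)$ either tends to $\infty$ (same contradiction) or stabilizes at some $M\geq1$; in the stable regime the propagation is a bijection that never doubles, so by Lemma \ref{thm3.3} every point of $\mathcal{Z}(\nu_m)$ is a multiple of $1/\gcd\mathcal{D}_{m+1}$, where $|\widehat{\delta_{p_{m+1}^{-1}\mathcal{D}_{m+1}}}(\cdot)|=1$ and $\gcd(\gcd\mathcal{D}_{m+1},p_{m+1})=1$. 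The main obstacle, and the delicate computation I would carry out carefully, is to show that along such a non-doubling trajectory the lowest-terms denominator $b$ of the point is preserved; then $b\geq2$ would divide $\gcd\mathcal{D}_{m+1}$ for all large $m$, hence divide $\gcd_{j}\{\gcd\mathcal{D}_j\}=1$, which is impossible. This gives $\mathcal{Z}(\nu_n)=\emptyset$ for all large $n$, and the downward implication above then extends it to all $n\geq1$.
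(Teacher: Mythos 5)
Your proposal is correct and follows essentially the same route as the paper's proof: propagating points of $\mathcal{Z}(\nu_m)$ through the maps $\xi\mapsto(\xi+\ell)/p_{m+1}$, forcing blow-up of the zero count at DPCS levels and contradicting weak convergence via the Argument-Principle lemma (Lemma \ref{thm4.6}) after upgrading to locally uniform convergence on $\mathbb{C}$, then in the stabilized regime using Lemma \ref{thm3.3} to place the zeros in $\frac{1}{\gcd\mathcal{D}_{m+1}}\mathbb{Z}$ and tracking lowest-terms denominators to contradict the gcd condition, with the second implication obtained from Proposition \ref{thm3.6} and Theorem \ref{thmxb} exactly as in the paper. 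Your departures are only streamlinings of the same argument --- a global counting inequality on all of $\mathcal{Z}(\nu_m)$ in place of the paper's tree $B_m$ of descendants of a single zero, Vitali's theorem instead of the paper's Arzel\`a--Ascoli step, and invoking the equivalence (ii)$\Leftrightarrow$(iii) for the case split --- and your flagged ``denominator is preserved'' claim should more precisely be ``the denominator $b$ divides every subsequent lowest-terms denominator'' (since $\gcd(b,p_{m+1})=1$), which is exactly what your concluding divisibility $b\mid\gcd\mathcal{D}_{m+j}$ uses.
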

\begin{proof}
For any $n\geq1$, let $x_0\in\mathcal{Z}(\nu_n)$. It implies that $x_0\in(0,1)$ and
\begin{equation}\label{(4.2)}
\widehat{\nu_n}(x_0+k)=\prod_{j=1}^\infty \widehat{\delta_{\mathcal{D}_{n+j}}}\left(\frac{x_0+k}{p_{n+1}\cdots p_{n+j}}\right)=0
\end{equation}
 for all $k\in\mathbb{Z}$.
For each $m\geq1$, let $\tau^{(m)}_{\ell}(x)=\frac{x+\ell}{p_{m}}$ for $\ell\in L_{m}^*$, where $L_{m}^*=\{0,1,\cdots,p_m-1\}$.
We call $m$ is possible for $\mathcal{S}\subset[0,1)$ if there exists $x\in\mathcal{S}$ such that there exist
$\ell_{1}\neq\ell_2\in L_{m}^*$ satisfying
$$
\widehat{\delta_{\mathcal{D}_{m}}}(\tau^{(m)}_{\ell_{i}}(x))\neq0,\;\;\: i=1,2.
$$
According to the definition of \textit{DPCS}, we know $\mathcal{S}\subset T_{p_m,\mathcal{D}_m}$ when $m$ is possible for $\mathcal{S}$.

If $n+1$ is possible for $\{x_0\}:=B_0$, then there exist two distant integers $\ell_1,\ell_2\in L_{n+1}^*$ such that $\widehat{\delta_{\mathcal{D}_{n+1}}}(\tau^{(n+1)}_{\ell_i}(x_0))\neq0$ for $i=1,2$.
In this case, denote the words set
$$
A_1:=\{1,2\}\;\text{and points set}\;B_1:=\{\tau^{(n+1)}_{\ell_{i_1}}(x_0):i_1\in A_1\}=\{x_{i_1}\}_{i_1\in A_1}.
$$
Since $x_0\in(0,1)$ and $\ell_1,\ell_2\in\{0,1,\cdots,p_{n+1}-1\}$, one has $x_1\neq x_2$ and $x_1,x_2\in(0,1)$.
Substituting $k=\ell_{i_1}+p_{n+1}k'$ into $\eqref{(4.2)}$, where $i_1\in A_1$ and $k'\in\mathbb{Z}$, we have
\begin{equation}\label{(4.3)}
\widehat{\nu_{n+1}}(\tau^{(n+1)}_{\ell_{i_1}}(x_0)+k')=\prod_{j=2}^\infty \widehat{\delta_{\mathcal{D}_{n+j}}}\left(\frac{\tau^{(n+1)}_{\ell_{i_1}}(x_0)+k'}{p_{n+2}\cdots p_{n+j}}\right)=0
\end{equation}
for all $k'\in\mathbb{Z}.$
Hence $B_1\subset\mathcal{Z}(\nu_{n+1})$.
This implies that
\begin{equation}\label{(4.4)}
\#\mathcal{Z}(\nu_{n+1})\geq\# B_1=2.
\end{equation}
If $n+1$ is not possible for $B_0$, then it implies that for any $x\in B_0$, there exists $\ell_1\in L_{n+1}$ such that $|\widehat{\delta_{D_{n+1}}}(\frac{x+\ell_1}{p_{n+1}})|=1$, as
$$
\sum_{\ell \in L_{n+1}}\left|\widehat{\delta_{\mathcal{D}_{n+1}}}\left(\frac{x+\ell}{p_{n+1}}\right)\right|^2=1.
$$
Let
$$
A_1:=\{1\}\;\text{and}\;B_1:=\{\tau^{(n+1)}_{\ell_{i_1}}(x_0):i_1\in A_1\}=\{x_{i_1}\}_{i_1\in A_1}.
$$
In this case, $\#B_1=\#B_0$.
Similarly, if $n+2$ is possible for $B_1$, by the above arguments, then there exist $\ell_{i_11}\neq\ell_{i_12}\in L_{n+2}^*$ for some $i_1\in A_1$ such that
$\widehat{\delta_{\mathcal{D}_{n+2}}}(\tau^{(n+2)}_{\ell_{l_{i_1i_2}}}(x_{i_1}))\neq0$ for $i_2=1,2$.
If $n+2$ is not possible for $B_1$, we can choose $\ell_{i_11}\in L_{n+2}$ such that
$|\widehat{\delta_{\mathcal{D}_{n+1}}}(\frac{x_{i_1}+\ell_{i_11}}{p_{n+2}})|=1$.
Let
$$
A_{2}:=\{i_1i_2:i_1\in A_1,i_2\in\{1,2\}(\text{if\;exist})\}
$$
and
$$
B_2:=\{\tau^{(n+2)}_{\ell_{i_1i_2}}(x_{i_1}):i_1i_2\in A_2\}=\{x_{i_1i_2}\}_{i_1i_2\in A_2}.
$$
Note that
$$
\tau^{(n+2)}_{l_2}\circ\tau^{(n+1)}_{l_1}(x_0)\neq\tau^{(n+2)}_{l_2'}\circ\tau^{(n+1)}_{l_1'}(x_0)
$$
for any $(l_1,l_2)\neq(l_1',l_2')\in L_{n+1}^*\times L_{n+2}^*$.
Hence $$
\#B_2\geq\begin{cases}
\# B_1, \;&n+2\;\text{is not possible for}\;B_1;
\\
\# B_1+1, \;&n+2\;\text{is possible for}\;B_1.
\end{cases}
$$
By taking $k=\ell_{i_1}+p_{n+1}\ell_{i_1i_2}+p_{n+1}p_{n+2}k'$ with $(i_1,i_1i_2)\in A_1\times A_2$ and $k'\in\mathbb{Z}$ into $\eqref{(4.2)}$, similar to \eqref{(4.3)}, we obtain that $B_2\subset\mathcal{Z}(\nu_{n+2})$, i.e.,
$$
\#\mathcal{Z}(\nu_{n+2})\geq\# B_2\geq1+\#\{1\leq i\leq 2:n+i\;\text{is possible for}\;B_{i-1}\}.
$$
Then we can define inductively the sets by
$$
A_{m}:=\{i_1\cdots i_{m}:i_1i_2\cdots i_{m-1}\in A_{m-1},\;\:\: i_m=1,2(\text{if\;exist\;})\}
$$
and
$$
B_m:=\{\tau^{(n+m)}_{\ell_{i_1i_2\cdots i_m}} (x_{i_1i_2\cdots i_{m-1}}):i_1i_2\cdots i_m\in A_m\}:=\{x_{i_1i_2\cdots i_m}\}_{i_1i_2\cdots i_m\in A_m},
$$
which satisfies $B_m\subset\mathcal{Z}(\nu_{n+m})$ and
$$
\#\mathcal{Z}(\nu_{n+m})\geq\#B_m\geq 1+\#\{1\leq i\leq m:n+i\;\text{is possible for}\;B_{i-1}\}.
$$

Suppose that $\sup_m\#B_m=\infty$, which yields $\#\mathcal{Z}(\nu_{n_k})\rightarrow\infty$, as $k\rightarrow\infty$.
According to Theorem \ref{thm3.4} (weak compactness theorem), there exists a subsequence $\nu_{n_k}$ which weakly converges to $\nu$.
It follows that $\widehat{\nu}_{n_k}(z)\rightarrow \widehat{\nu}(z)$ as $k\rightarrow\infty$, uniformly on each compact subset of $\mathbb{C}$.
In fact, for any $z_j=x_j+y_ji$ with reals $|x_j|,|y_j|\leq M$, we know that
\begin{align*}
|\widehat{\mu}(z_1)-\widehat{\mu}(z_2)|&=\left|\int e^{-2\pi \xi y_1+2\pi i\xi x_1}(1-e^{-2\pi \xi (y_2-y_1)+2\pi i\xi (x_2-x_1)}d\nu_{n_k}\right|
\\
&\leq e^{2\pi KM}(|1-e^{2\pi K|y_2-y_1|}|+e^{2\pi  K}2\pi K|x_2-x_1|),
\end{align*}
where $K=\max\{|x|,1:x\in\text{spt}(\nu_n)\cup\text{spt}(\nu)$ and $\mu\in\{\nu_n\}\cup\{\nu\}$.
It is easy to see that $\{\widehat{\nu_{n}}(z)\}\cup\{\widehat{\nu}(z)\}$ is equicontinuous for $z\in\Bbb C$ with $|z|\leq M$.
Note that
$$\sup\{|\widehat{\mu}(z)|:\mu\in\{\nu_{m_k}\}\cup\{\nu\},|z|\leq M\}<\infty.
$$
and $\widehat{\nu_{m_k}}(z)\rightarrow\widehat{\nu}(z)$ for any $z\in\Bbb C$.
Applying Arzel\`a-Ascoli Theorem, we get $\widehat{\nu_{m_k}}(z)$ converges uniformly to $\widehat{\nu}(z)$ on $|z|\leq M$.
By Lemma \ref{thm4.6}, we have $\widehat{\nu}(x)\equiv0$, i.e., $\nu=0$, a contraction.
Hence, there exists $N_0\geq1$ such that $\#B_{n+N_0}=\#B_{n+j}$ for all $j\geq N_0$.
In particular, from Lemma \ref{thm3.2}, if $p_{n+j}>\#\mathcal{D}_{n+j}$, then $(p_{n+j},\mathcal{D}_{n+j})$ satisfies \textit{DPC}.
Thus $n+j$ is possible for any set of single point $\{x\}\subset[0,1)$.
This implies that
$$
\#\{i\geq1:p_{n+j}>\#\mathcal{D}_{n+j}\}<\infty.
$$

By the above arguments, we have $p_{n+j}=\#\mathcal{D}_{n+j}$ for all $j\geq N_0$.
It follows from Lemma \ref{thm3.3} that $T_{p_{n+j},\mathcal{D}_{n+j}}=[0,1)\setminus\frac{1}{\gcd \mathcal{D}_{n+j}}\mathbb{Z}$.
We know $n+j$ is not possible for $B_{j-1}$ for all $j\geq N_0+1$.
Therefore $B_{j-1}\subset \frac{1}{\gcd \mathcal{D}_{n+j}}\mathbb{Z}$ for all $j\geq N_0+1$.
Fixing $j\geq N_0+1$, put $\frac{p}{q}=\frac{v_{n+j}}{\gcd\mathcal{D}_{n+j}}=x_{i_1i_2\cdots i_{j-1}}\in B_{j-1}$, where $v_{n+j}\in\mathbb{Z}$, $\gcd(p,q)=1$ and $q>1$.
The construction of $B_{j}$ allows us to find $i_1i_2\cdots i_{j-1}1\in A_{n+j}$ such that $$x_{i_1i_2\cdots i_{j-1}1}=\frac{x_{i_1i_2\cdots i_{j-1}}+\ell_{i_1i_2\cdots i_{j-1}1}}{p_{n+j}}\in B_{j}\subset\frac{1}{\gcd\mathcal{D}_{n+j+1}}\mathbb{Z}.$$
Hence $q|\gcd\mathcal{D}_{n+j+1}$.
Similarly, it follows that
$$
\frac{x_{i_1i_2\cdots i_{j-1}1}+\ell_{i_1i_2\cdots i_{j-1}11}}{p_{n+j+1}}\in B_{j+1}\subset\frac{1}{\gcd \mathcal{D}_{n+j+2}}\mathbb{Z}
$$
for some $i_1i_2\cdots i_{j-1}11\in A_{n+j+1}$.
Then $q|\gcd\mathcal{D}_{n+j+2}$.
By induction, one has $q|\gcd\mathcal{D}_{n+j+i}$ for all $i\geq0$.
Then
$$\cup_{k\geq n+j} T_{p_k,\mathcal{D}_{k}}=[0,1)\setminus(\cap_{k\geq n+j}\frac{1}{\gcd\mathcal{D}_k}\mathbb{Z})\subset [0,1)\setminus\frac{1}{q}\mathbb{Z}.
$$
Noting the assumption $\limsup_{k\rightarrow\infty}T_{p_k,\mathcal{D}_k}\supset(0,1)$,
it yields that $q=1$, i.e. $x_0\in\mathbb{Z}$, a contradiction.
Therefore $\mathcal{Z}(\nu_n)=\emptyset$ for all $n\geq1$.

Finally, the theorem follows from Lemma \ref{thm3.7} and Theorem \ref{thmxb}.
\end{proof}
\begin{thm}\label{thm4.8}
Under the assumption of Theorem \ref{thm2.1}, if $\limsup_{k\rightarrow\infty}T_{p_k,\mathcal{D}_k}\supset(0,1)$ and $\sup_n\{p_n\}<\infty$, then there exists an equi-positive subsequence $\{\nu_{n_k}\}$.
In particular,
$\mu_{\{p_n,\mathcal{D}_n\}}$ is a spectral measure.
\end{thm}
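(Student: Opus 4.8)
The plan is to pass to a weakly convergent subsequence by compactness and then prove equi-positivity by a multi-level refinement of the Wiener-theorem argument of Theorem \ref{thm4.3}. First, since $\sup_n p_n<\infty$ together with $\sup_n\{p_n^{-1}d:d\in\mathcal{D}_n\}<\infty$ forces $\sup_n\{d:d\in\mathcal{D}_n\}<\infty$, and $\sup_n\#\mathcal{D}_n<\infty$, there are only finitely many distinct Hadamard triples among $\{(p_n,\mathcal{D}_n,L_n)\}$. By the diagonal principle I would extract a subsequence $\{n_k\}$ for which, for each fixed $j$, the triple $(p_{n_k+j},\mathcal{D}_{n_k+j},L_{n_k+j})$ is eventually independent of $k$; writing the stabilized triple as $(q_j,E_j,F_j)$ and using the uniform tail control of Lemma \ref{thm3.9}, one gets $\nu_{n_k}\xrightarrow{w}\nu$, where $\nu=\delta_{q_1^{-1}E_1}\ast\delta_{(q_2q_1)^{-1}E_2}\ast\cdots$ is again a Cantor--Moran measure of a Hadamard-triple sequence. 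By Lemma \ref{thm4.1}, $\widehat{\nu}$ is entire and $\widehat{\nu}\not\equiv0$, so $\mathcal{Z}(\nu)$ is finite; set $m_0=\#\mathcal{Z}(\nu)<\infty$ as in Theorem \ref{thm4.3}.

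Second, I would separate $[0,1)$ into a generic part and finitely many bad points. Proposition \ref{thm3.6} applied to $\nu_{n_k}\xrightarrow{w}\nu$ shows that for every $\delta>0$ there is a uniform $\epsilon_\delta>0$ with $\sup_{m\in\mathbb{Z}}|\widehat{\nu_{n_k}}(x+m)|\geq\epsilon_\delta$ for all large $k$ and all $x\in[0,1]\setminus B(\mathcal{Z}(\nu);\delta)$. Thus equi-positivity reduces to controlling $\widehat{\nu_{n_k}}$ near the finitely many points of $\mathcal{Z}(\nu)$; by Lemma \ref{thm3.3} these can only be rationals $r/s$ whose denominators divide some $\gcd\mathcal{D}_j$, hence are uniformly bounded. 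Note that one cannot simply invoke Theorem \ref{thm4.3} here: that result needs $p_{n_k+1}>m_0N$, which may fail when $\sup_n p_n$ is small, so the single Wiener step must be replaced by a routing through several levels.

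The heart of the proof is the uniform lower bound at a bad point $x^\ast=r/s\in\mathcal{Z}(\nu)$, the difficulty being that the frequency $m$ witnessing non-vanishing may have to be taken arbitrarily large. Here the hypothesis $\limsup_k T_{p_k,\mathcal{D}_k}\supset(0,1)$ (equivalently condition (iii), already shown equivalent in Section \ref{sect.3}) is decisive: it guarantees that in every tail there is a \emph{breaking} level $j_0$ with $x^\ast\in T_{p_{n_k+j_0},\mathcal{D}_{n_k+j_0}}$, i.e. the pair is DPC or $s\nmid\gcd\mathcal{D}_{n_k+j_0}$. Writing $P^{(k)}_j=p_{n_k+1}\cdots p_{n_k+j}$, I would then build, as in the Wiener/Parseval computation of Theorem \ref{thm4.3} but carried out level by level, a frequency $m$ (a mixed-radix combination of residues $\ell_j\in\{0,\dots,p_{n_k+j}-1\}$) routed so that the arguments $(x^\ast+m)/P^{(k)}_j$ at all levels $j<j_0$ are the geometrically shrinking defects of size $O\!\big(1/(sP^{(k)}_j)\big)$, whence these factors tend to $1$ and their product stays bounded below uniformly; the unavoidable critical resolution of the defect is made to occur exactly at level $j_0$, where the breaking property forces $|\widehat{\delta_{p_{n_k+j_0}^{-1}\mathcal{D}_{n_k+j_0}}}(\cdot)|$ to be bounded below by a constant depending only on the finite alphabet (by Lemma \ref{thm3.1}(i)). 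Once the argument has small enough modulus, Lemma \ref{thm3.9} bounds the remaining infinite tail by $2/\pi$, and multiplying the three contributions gives $|\widehat{\nu_{n_k}}(x^\ast+m)|\geq\epsilon_0$ with $\epsilon_0$ independent of $k$; a continuity/compactness argument then extends this to a fixed neighborhood of each of the finitely many $x^\ast$.

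Combining the generic estimate with the estimates on the finitely many bad neighborhoods produces a single $\epsilon_0>0$ valid for all $x\in[0,1)$, so $\{\nu_{n_k}\}$ is equi-positive, and Theorem \ref{thmxb} yields that $\mu_{\{p_n,\mathcal{D}_n\}}$ is spectral. I expect the main obstacle to be exactly the interaction, inside the infinite product, between the possibly far-out breaking level (which forces the frequency, and hence $m$, to be unbounded as $k$ grows) and the requirement that every preceding factor stay close to $1$; making $\epsilon_0$ genuinely uniform in $k$ is where the careful mixed-radix routing, the Hadamard normalization of Lemma \ref{thm3.1}, and the uniform tail estimate of Lemma \ref{thm3.9} have to be combined.
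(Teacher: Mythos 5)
Your outer structure matches the paper's: the diagonal extraction and identification of the weak limit $\nu$ as the Cantor--Moran measure of the stabilized triples (the paper's Step 1, using the tail estimate of Lemma \ref{thm3.9} and the Uniqueness Theorem \ref{thm4.5}), the reduction via Proposition \ref{thm3.6} to finitely many bad points of $\mathcal{Z}(\nu)$, the observation that Theorem \ref{thm4.3} is unavailable when $\sup_n p_n$ is small, and the mixed-radix routing with finite-alphabet minima are all essentially the paper's ingredients. But the heart of your argument has a genuine gap: the claim that a \emph{single} breaking level $j_0$ suffices, after which ``once the argument has small enough modulus, Lemma \ref{thm3.9} bounds the remaining infinite tail.'' After the branch at level $j_0$, the routed point $(y_{j_0-1}+\ell)/p_{n_k+j_0}$ is a rational in $[0,1)$ whose denominator has grown (it is no longer in $\frac{1}{s}\mathbb{Z}$); it is not small, and Lemma \ref{thm3.9} only controls the factors \emph{beyond} $J_0$ further levels once smallness is achieved. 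Across those intermediate levels, every finite continuation of the digit string may produce a vanishing factor: this is exactly the mechanism in the An--Fu--Lai non-spectral example ($p_n\equiv 2$, $\mathcal{D}_n=\{0,3\}$), where from $x^\ast=1/3$ the only routing with all factors nonzero has infinitely many nonzero digits, so \emph{every} integer frequency $m$ gives $\widehat{\nu_n}(x^\ast+m)=0$. Having two nonzero choices at one level, each bounded below, does not yet produce a finite $m$ with a uniform lower bound; one must prove the branching process terminates.

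This termination is precisely what the paper's Step 3 supplies and your proposal lacks. The paper builds nested families $\mathcal{C}_l\subset\mathscr{Z}(\widehat{\nu_{m_k+s+l}})$ of surviving zero points whose cardinality strictly increases at every genuine double-point (DPC) event, and bounds them by the uniform zero count $k_0:=\sup_n\#(\mathscr{Z}(\widehat{\nu_n})\cap[0,1))<\infty$ --- a bound that itself requires the complex-analytic machinery of Lemma \ref{thm4.6} (Argument Principle) together with Arzel\`a--Ascoli and weak compactness, as in the proof of Theorem \ref{thm4.7}; and in the alternative where the process has all factor moduli eventually equal to $1$, an inductive gcd-collapse argument using $\gcd_{j\geq n}\{\gcd\mathcal{D}_j\}\equiv 1$ forces the routed points into $\mathbb{Z}$, a contradiction. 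None of this counting or collapse argument appears in your sketch (your last paragraph names the obstacle but does not resolve it), so the uniform bound $|\widehat{\nu_{n_k}}(x^\ast+m)|\geq\epsilon_0$ at bad points --- and hence equi-positivity --- is not established. Two smaller inaccuracies: your identification of the bad points as rationals of bounded denominator ``by Lemma \ref{thm3.3}'' needs the iteration of the paper's Step 2 (which again uses the $k_0$ bound to show at most $k_0$ strict-contraction levels occur before the iterate lands in $\frac{1}{d}\mathbb{Z}$), and your breaking condition should be stated for the \emph{iterate} of $x^\ast$ at level $j_0$, not for $x^\ast$ itself, since membership in the DPCS is level- and point-dependent.
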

\begin{proof}
%[\bf Proof of Theorems \ref{thm1.1},\ref{thm1.3} when $\boldsymbol{\sup_n\{|d|:d\in D_n\}<\infty}$]
%If $\sup_n\{|p_n|\}=\infty$, then there exists a subsequence $n_k$ such that $|p_{n_k+1}|\rightarrow\infty$, as $k\rightarrow\infty$.
%This implies that $|p_{n_k+1}|^{-1}\sup_n\{|d|:d\in D_n\}\leq\frac{1}{8},\;k\geq K$ for some $K$.
%Hence
%\begin{equation}\label{(3.6)}
%|\widehat{\nu}_{n_k}(\xi)|\geq\prod_{j=1}^\infty |\text{Re} m_{D_{n+j}}(\frac{\xi}{p_{n+1}\cdots p_{n+j}})|\geq\prod_{j=1}^\infty \cos\frac{\pi}{2^{j+1}}\geq\frac{2}{\pi}.
%\end{equation}
%Then $\{\mu_{n_k}\}$ is equi-positive.
%Suppose that $\sup_n\{|p_n|\}<\infty$.
\textbf{Step 1: Find a subsequence $\{\nu_{n_k}\}$ and the weak limit $\nu$.}

The conditions $\sup_n\{p_n:n\geq1\}<\infty$ and $\sup_n\{p_n^{-1}d:d\in\mathcal{D}_n\}<\infty$ imply that $\sup_n\{d:d\in \mathcal{D}_n\}<\infty$. Choose $m_1\in \mathbb{N}$ such that
$$(p_{m_1+1},\mathcal{D}_{m_1+1})=(p_{k+1}, \mathcal{D}_{k+1})\;\text{for\;infinitely\;many}\;k, $$
and denote
$$\mathcal{A}_1:=\{k>m_1:(p_{m_1+1},\mathcal{D}_{m_1+1})=(p_{k+1}, \mathcal{D}_{k+1})\}.$$
Similarly, choose the smallest integer $m_2\in\mathcal{A}_1$ such that
$$(p_{m_2+2},\mathcal{D}_{m_2+2})=(p_{k+2},\mathcal{D}_{k+2})\;\text{for\;infinitely\;many}\;k\in \mathcal{A}_1$$
and let
$$
\mathcal{A}_2:=\{k\in \mathcal{A}_1:(p_{m_2+2},\mathcal{D}_{m_2+2})=(p_{k+2}, \mathcal{D}_{k+2})\}.
$$
By induction, we get a sequence $\{m_k\}$ such that
$$(p_{m_k+k},\mathcal{D}_{m_k+k})=(p_{k'+k},\mathcal{D}_{k'+k})\;\text{for\;infinitely\;many}\;k'\in \mathcal{A}_{k-1}.$$
Applying another diagonalization process to $\{(p_{m_k+i},\mathcal{D}_{m_k+i})\}$, we obtain a further subsequence $\{(p_{m_k+k},\mathcal{D}_{m_k+k})\}_{k=1}^\infty$ which satisfies
$$
(p_{m_k+i},\mathcal{D}_{m_k+i})=(p_{m_{k'}+i},\mathcal{D}_{m_{k'}+i})
$$
for any $k<k'$ and $1\leq i\leq k$.
Rewrite this new sequence as
$$(p_k',\mathcal{D}_{k}',L_k'):=(p_{m_k+k},\mathcal{D}_{m_k+k},L_{m_k+k})$$ for all $k\geq1$.
Then the associated Moran measure
$$
\mu_{\{p_k',\mathcal{D}_k'\}}(\cdot)=\delta_{p_1'^{-1}\mathcal{D}_1'}\ast\delta_{p_1'^{-1}p_2'^{-1}\mathcal{D}_2'}\ast\cdots.
$$
From Theorem \ref{thm3.4}, without loss of generality, we assume that $\{\nu_{m_k}\}$ weakly converges
to $\nu$.
At the end of the step, we show the weak limit $\nu$ is $\mu_{\{p_k',\mathcal{D}_k'\}}$.
A basic fact is $\prod_{j=n+1}^\infty \cos2^{-j}\pi\rightarrow1$, as $n\rightarrow\infty$.
So for any $\epsilon>0$, there exists $N>0$ such that $|\prod_{j=n+1}^\infty \cos2^{-j}\pi-1|<\frac12\epsilon$
and $|2^{-n}\pi|\leq \frac14\epsilon$ for all $n\geq N$.
Fixed $x\in\Bbb R$, let $k>J_0+N$, where $J_0$ is given as Lemma \ref{thm3.9}.
Then
$$
|\widehat{\mu_{\{p_k',\mathcal{D}_k'\}}}(x)-\widehat{\nu_{m_k}}(x)|
=\prod_{j=1}^{k}|\widehat{\delta_{p_1^{'-1}\cdots p_j^{'-1}\mathcal{D}_j'}}(x)|\cdot
\left|\prod_{j=k+1}^{\infty}\widehat{\delta_{p_1^{'-1}\cdots p_j^{'-1}\mathcal{D}_j'}}(x)-
\prod_{j=m_k+k+1}^{\infty}\widehat{\delta_{p_{m_k+1}^{-1}\cdots p_j^{-1}\mathcal{D}_j}}(x)\right|.
$$
By the proof of Lemma \ref{thm3.9} and $k>J_0+N$, then
$$
\widehat{\delta_{p_1^{'-1}\cdots p_j^{'-1}\mathcal{D}_j'}}(x),
\widehat{\delta_{p_{m_k+1}^{-1}\cdots p_j^{-1}\mathcal{D}_j}}(x)\in A_{j-k+N}\subset\{re^{i\theta}:\cos2^{-(j-k+N)}\pi\leq r\leq 1, |\theta|\leq 2^{-(j-k+N)}\pi\}
$$
for all $j>k$.
Hence
$
\prod_{j=k+1}^{\infty}\widehat{\delta_{p_1^{'-1}\cdots p_j^{'-1}\mathcal{D}_j'}}(x),
\prod_{j=m_k+k+1}^{\infty}\widehat{\delta_{p_{m_k+1}^{-1}\cdots p_j^{-1}\mathcal{D}_j}}(x)
$
belong to
$$
A:=\left\{re^{i\theta}:1-\frac12\epsilon\leq\prod_{j=N+1}^\infty\cos2^{-j}\pi\leq r\leq 1, |\theta|\leq 2^{-N}\pi\leq \frac14\epsilon\right\}.
$$
It follows from $|\widehat{\delta_{p_1'^{-1}\cdots p_j^{'-1}}}(x)|\leq1$ that
$$
|\widehat{\mu_{\{p_k',\mathcal{D}_k'\}}}(x)-\widehat{\nu_{m_k}}(x)|
=|A|\leq |\{r:1-\frac12\epsilon\leq r\leq 1\}|+
|\{e^{i\theta}:|\theta|\leq \frac14\epsilon\}|\leq\epsilon,
$$
where $|E|$ denotes the diameter of $E$.
This implies that
$\widehat{\mu_{\{p_k',\mathcal{D}_k'\}}}(x)=\lim_{k\rightarrow\infty}\widehat{\nu_{m_k}}(x)=\widehat{\nu}(x)$ for any $x\in\mathbb{R}$.
Theorem \ref{thm4.5} (Uniqueness Theorem) gives $\mu_{\{p_k',\mathcal{D}_k'\}}=\nu$.

If the sequence $\{(p_k',\mathcal{D}_k')\}$ satisfies $\limsup_{k\rightarrow\infty}T_{p_k',\mathcal{D}_k'}\supset(0,1)$, then $\mathcal{Z}(\nu)=\emptyset$ by Theorem \ref{thm4.7}, and $\mu_{\{p_n,\mathcal{D}_n\}}$ is a spectral measure.
So we only need to consider $p_n'=\#\mathcal{D}_n'$ and $\gcd_{j\geq n}\{\gcd\mathcal{D}_j'\}\equiv d>1$ for all $n\geq1$.
We assume that $x\in\mathcal{Z}(\nu)\neq\emptyset$.

\textbf{Step 2: Determine $x$.}

Let $\iota^{(i)}_{\ell}(x):=\frac{x+\ell}{p_{i}'}, \ell\in L_i'$ for any $i\geq1$.
There exists $\ell_1\in L_i'$ such that
$|\widehat{\delta_{\mathcal{D}_1'}}(\iota^{(1)}_{\ell_1}(x))|=\max\{|\widehat{\delta_{\mathcal{D}_1'}}(\iota^{(1)}_{\ell}(x))|:\ell\in L_1'\}>0$, as
$$
\sum_{\ell\in L_1'}|\widehat{\delta_{\mathcal{D}_1'}}(\iota^{(1)}_\ell(x))|^2=1.
$$
Similarly, we can find $\ell_2\in L_i'$ such that
$$
|\widehat{\delta_{\mathcal{D}_2'}}(\iota^{(2)}_{\ell_2}\circ\iota^{(1)}_{\ell_1}(x))|=
\max\{|\widehat{\delta_{\mathcal{D}_2'}}(\iota^{(2)}_{\ell}\circ\iota^{(1)}_{\ell_1}(x))|:\ell\in L_2'\}>0.
$$
By induction, we obtain a sequence $\{\ell_j\}_{j=1}^\infty$ such that
$$
|\widehat{\delta_{\mathcal{D}_j'}}(\iota^{(j)}_{\ell_j}\circ\cdots\circ \iota^{(1)}_{\ell_1}(x))|=\max\{|\widehat{\delta_{\mathcal{D}_j'}}(\iota^{(j)}_{\ell}\circ\iota^{(j-1)}_{\ell_{j-1}}\circ\cdots\circ \iota^{(1)}_{\ell_1}(x))|:\ell\in L_j'\}>0.
$$
Let
$$
\kappa_n:=\delta_{p_1'^{-1}\mathcal{D}_1'}\ast\cdots\ast\delta_{(p_1'\cdots p_n')^{-1}\mathcal{D}_n'},\;
\kappa_{>n}:=\delta_{p_{n+1}'^{-1}\mathcal{D}_{n+1}'}\ast\delta_{p_{n+1}'p_{n+2}'^{-1}\mathcal{D}_{n+2}'}\ast\cdots
$$
for all $n\geq1$.
By the proof of Theorem \ref{thm4.7}, we have
$$
\#\mathcal{Z}(\kappa_{>j})\geq
\#\{1\leq i\leq j-1: 0<|\widehat{\delta_{\mathcal{D}_i'}}(\tau^{(i)}_{\ell_i}\circ\cdots\circ \tau^{(1)}_{\ell_1}(x))|<1\}.
$$
The relation between $\kappa_{>n}$ and $\nu_n$ tells us that $\mathscr{Z}(\widehat{\kappa_{>n}})\cap[0,1)\subset\mathscr{Z}(\widehat{\nu_n})\cap[0,1)$, i.e.,  $$\sup_n\#(\mathscr{Z}(\widehat{\kappa}_{>n})\cap[0,1))\leq k_0:=\sup_n\#(\mathscr{Z}(\widehat{\nu}_n)\cap[0,1))<\infty,$$
which implies that
\begin{equation}\label{(4.5)}
\#\{1\leq i\leq j-1: 0<|\widehat{\delta_{\mathcal{D}_i'}}(\iota^{(i)}_{\ell_i}\circ\cdots\circ \iota^{(1)}_{\ell_1}(x))|<1\}\leq k_0
\end{equation}
for all $j\geq1$.
Hence there exists an integer $s$ such that
$$
|\widehat{\delta_{\mathcal{D}_j'}}(\iota^{(j)}_{\ell_j}\circ\cdots\circ \iota^{(1)}_{\ell_1}(x))|=1
$$
for any $j\geq s$.
Similar to the proof of Theorem \ref{thm4.7}, we know
$$
x_0:=\iota^{(s)}_{\ell_{s}}\circ\iota^{(s-1)}_{\ell_{s-1}}\circ\cdots\circ \iota^{(1)}_{\ell_1}(x)\in\frac{\mathbb{Z}}{d}.
$$

\textbf{Step 3: Complete the estimate.}

Let $t:=\sup_j\#\mathcal{D}_j'$.
Note that
$$
|\widehat{\delta_{\mathcal{D}_j'}}(\iota^{(j)}_{\ell_j}\circ\cdots\circ \iota^{(1)}_{\ell_1}(x))|^2\geq\frac{1}{t}\sum_{\ell\in L_j'}|\widehat{\delta_{\mathcal{D}_j'}}(\iota^{(j)}_{\ell}\circ\cdots\circ \iota^{(1)}_{\ell_1}(x))|^2\equiv\frac{1}{t}.
$$
It follows from \eqref{(4.5)} that
$$
\prod_{j=1}^s\left|\widehat{\delta_{\mathcal{D}_j'}}\left(\frac{x+\ell_1+p_1'\ell_2+\cdots+p_1'\cdots p_{s-1}'\ell_s}{p_1'\cdots p_j'}\right)\right|^2\geq\frac{1}{t^{k_0}}.
$$
Letting $k\geq s$, we have
$$
|\widehat{\nu_{m_k}}(x+\ell_1+\cdots+p_1\cdots p_{s-1}\ell_s)|=|\widehat{\nu_{m_k+s}}(x_0)|\cdot\prod_{j=1}^s\left|\widehat{\delta_{\mathcal{D}_j'}}\left(\frac{x+\ell_1+\cdots+p_1\cdots p_{s-1}\ell_s}{p_1'\cdots p_j'}\right)\right|.
$$
We claim that there exists an integer $J$ such that
$$
\prod_{j=J+1}^\infty\left|\widehat{\delta_{\mathcal{D}_{n+j}}}\left(\frac{\xi}{\widetilde{p}_1\cdots \widetilde{p}_{j}}\right)\right|\geq\frac{2}{\pi},\;\;n\geq1
$$
for any $\widetilde{p}_i\in\{p_i\}_i$ and $|\xi|\leq1$.
In fact, by choosing $J$ satisfying $\frac{\max_n\{d:d\in \mathcal{D}_n\}}{2^J}\leq \frac14$, we have
$$
\prod_{j=J+1}^\infty\left|\widehat{\delta_{\mathcal{D}_{n+j}}}\left(\frac{\xi}{\widetilde{p}_1\cdots \widetilde{p}_{j}}\right)\right|\geq
\prod_{j=J+1}^\infty\left|\text{Re}\left(\widehat{\delta_{\mathcal{D}_{n+j}}}\left(\frac{\xi}{\widetilde{p}_1\cdots \widetilde{p}_{j}}\right)\right)\right|\geq
\prod_{j=1}^\infty \cos\frac{\pi}{2^{j+1}}=\frac{2}{\pi}.
$$
Let
$$
U:=\left\{\left|\widehat{\delta_\mathcal{D}}\left(\frac{x}{\widetilde{p}_1\cdots \widetilde{p}_{j}}\right)\right|^2\neq0:\mathcal{D}\in\{\mathcal{D}_i\}, \widetilde{p}_i\in\{p_i\},x\in\bigcup_j\frac{\mathbb{Z}}{\gcd\mathcal{D}_{j}},1\leq j\leq J+k_0\right\}
$$
and
$
\min U:=\varepsilon_0'>0.
$

If $\widehat{\nu_{m_k+s}}(x_0)\neq0$, then $$|\widehat{\nu_{m_k}}(x+\ell_1+\cdots+p_1\cdots p_{s-1}\ell_s)|^2\geq|\widehat{\nu_{m_k+s}}(x_0)|^2\frac{1}{t^{k_0}}\geq\varepsilon_0'^J\frac{4}{\pi^2 t^{k_0}}:=\varepsilon_0''>0.$$
If $\widehat{\nu_{m_{k}+s}}(x_0)=0$, by
$$
\sum_{\ell\in L_{m_k+s+1}}\left|\widehat{\delta_{\mathcal{D}_{m_k+s+1}}}\left(\frac{x_0+\ell}{p_{m_k+s+1}}\right)\right|^2\equiv1,
$$
we can choose $\ell_{s+1}\in L_{m_k+s+1}$ such that $|\widehat{\delta_{\mathcal{D}_{m_k+s+1}}}(x_1)|\neq0$, where $x_1:=\frac{x_0+\ell_{s+1}}{p_{m_k+s+1}}$.
If $|\widehat{\delta_{\mathcal{D}_{m_k+s+1}}}(x_1)|=1$, then $x_1\in\frac{\mathbb{Z}}{\gcd\mathcal{D}_{m_k+s+1}}$.
If $|\widehat{\delta_{\mathcal{D}_{m_k+s+1}}}(x_1)|<1$, then there exists $\ell_{s+1}'\in L_{m_k+s+1}\setminus\{\ell_{s+1}\}$ such that
$|\widehat{\delta_{\mathcal{D}_{m_k+s+1}}}(x_2)|\neq0$, where $x_2:=\frac{x_0+\ell_{s+1}'}{p_{m_k+s+1}}$.
Note that $x_i\in\frac{\mathbb{Z}}{dp_{m_k+s+1}}$.
So if $\widehat{\nu_{m_k+s+1}}(y)\neq0$ for some $y\in\{x_1,x_2\}$(with loss of generality, we assume $y=x_1$), then $|\widehat{\delta_{\mathcal{D}_{m_k+s+1}}}(y)|\geq\varepsilon_0'$, this yields that
$$
|\widehat{\nu}_{m_k}(x+\ell_1+\cdots+p_1\cdots p_{s}\ell_{s+1})|\geq\varepsilon_0'\varepsilon_0''.
$$
If $\widehat{\nu_{m_k+s+1}}(y)=0$ for all $y\in\{x_1,x_2\}$(or $\{x_1\}$), we write $\mathcal{C}_1=\{x_1,x_2\}\subset\mathscr{Z}(\widehat{\nu_{m_k+s+1}})$.

For any $x_i\in\mathcal{C}_1$, there exists $\ell_{s+2}\in L_{m_k+s+2}$ such that $|\widehat{\delta_{\mathcal{D}_{m_k+s+2}}}(x_{i1})|\neq0$, where $x_{i1}:=\frac{x_i+\ell_{s+2}}{p_{m_k+s+2}}$.
If $|\widehat{\delta_{\mathcal{D}_{m_k+s+2}}}(x_{i1})|=1$, then $x_{i1}\in\frac{\mathbb{Z}}{\gcd\mathcal{D}_{m_k+s+2}}$.
If $|\widehat{\delta_{\mathcal{D}_{m_k+s+2}}}(x_{i1})|<1$, then there exists $\ell_{s+2}'\in L_{m_k+s+2}\setminus\{\ell_{s+2}\}$ such that
$|\widehat{\delta_{\mathcal{D}_{m_k+s+2}}}(x_{i2})|\neq0$, where $x_{i2}:=\frac{x_i+\ell_{s+2}'}{p_{m_k+s+2}}$.
Note that $x_{ij}\in\frac{\mathbb{Z}}{\gcd\mathcal{D}_{m_k+s+1}p_{m_k+s+1}p_{m_k+s+2}}$.
So if $\widehat{\nu_{m_k+s+2}}(y)\neq0$ for some $y\in\{x_{i1},x_{i2}\}$, then $|\widehat{\delta_{\mathcal{D}_{m_k+s+1}}}(y)|\geq\varepsilon_0'$, this yields that
$$
|\widehat{\nu}_{m_k}(x+\ell_1+\cdots+p_1\cdots p_{s+1}\ell_{s+2})|\geq\varepsilon_0'^2\varepsilon_0''.
$$
If $\widehat{\nu_{m_k+s+2}}(y)=0$ for all $y\in\{x_{i1},x_{i2}\}_{x_i\in\mathcal{C}_1}$(if exist), we write $\mathcal{C}_2=\{x_{ij}:x_i\in\mathcal{C}_1\}$.
It is easy to check that
$\mathcal{C}_2\subset\mathscr{Z}(\widehat{\nu_{m_k+s+2}})$ and $\#\mathcal{C}_2\geq \#\mathcal{C}_1+\#\{x_i\in\mathcal{C}_1:0<|\widehat{\delta_{\mathcal{D}_{m_k+s+2}}}(x_{i1})|<1\}$.
In fact, the construct of $\mathcal{C}_2$ gives $x_{ij}\neq x_{i'j'}$ for any $ij\neq i'j'$.

By induction, there exists $\Omega_l\subset\{1,2\}^l$ such that the sequence $\mathcal{C}_l:=\{x_i\}_{i\in\Omega_l}\subset\mathscr{Z}(\widehat{\nu_{m_k+s+l}})$ and
$$
\#\mathcal{C}_l\geq \#\mathcal{C}_{l-1}+\#\{x_i\in\mathcal{C}_{l-1}:0<|\widehat{\delta_{\mathcal{D}_{m_k+s+l}}}(x_{i1})|<1\}.
$$

Recall that $\#\mathscr{Z}(\widehat{\nu_{m_k+s+l}}\cap[0,1))\leq k_0$ for all $l\geq1$, we know $\#\mathcal{C}_l\leq k_0$.
If $l=\infty$, then there exists an integer $j_0$ such that $|\widehat{\delta_{\mathcal{D}_{m_k+s+j}}}(x_i)|=1$ for all $i\in\Omega_j$ with $j\geq j_0$.
This yields that
$$
x_i\in\frac{\mathbb{Z}}{\gcd(\gcd\mathcal{D}_{m_k+s+j_0}, \gcd\mathcal{D}_{m_k+s+j})},\;\;\forall i\in\Omega_j,\;j\geq j_0.
$$
According to $\gcd_{j\geq n}\{\gcd \mathcal{D}_j\}\equiv1,\;\;\forall n\geq1$, we can find an integer $l'$ such that $x_{i}\in\mathbb{Z}$ for all $i\in\Omega_{l'}$, which is impossible, $i.e.$, $l<\infty$.
Let $k_{x,\nu_{m_k}}=\ell_1+\cdots+p_1\cdots p_{s+l-1}\ell_{s+l}$, it follows that
$$
|\widehat{\nu_{m_k}}(x+k_{x,\nu_{m_k}})|^2\geq{\varepsilon_0'^{k_0}} \varepsilon_0'':=\varepsilon_0'''>0.
$$
By Lemma 2.2 of \cite{AFL19}, we know that $\{\widehat{\nu_{m_k}}\}$ is equicontinuous.
This together with the above estimate implies that
\begin{equation}\label{(4.6)}
|\widehat{\nu_{m_k}}(y+k_{x,\nu_{m_k}})|^2\geq\frac{1}{2} \varepsilon_0'''>0,
\end{equation}
whenever $\text{dist}(y,\mathcal{Z}(\nu))<\delta_0$ for some $\delta_0$.
Let $E:=[0,1]\setminus\{y\in(0,1):\text{dist}(y,\mathcal{Z}(\nu))<\delta_0\}$.
%By the proof of Lemma \ref{thm3.6}, there exist $\widetilde{\varepsilon}>0$ and $\{k_{x_i}\}_{i=1}^M$ such that for any $x\in E$, we have
%$$
%|\widehat{\nu}(x+k_{x_i})|^2\geq \widetilde{\varepsilon}
%$$
%for some $1\leq i\leq M$.
%By $\{x\in E:\widehat{\nu}(x+k)=0\;\text{for all}\;k\in\mathbb{Z}\}=\emptyset$, there exists $\epsilon_x>0$ such that
%$$
%\sup_k|\widehat{\nu}(x+k)|\geq \epsilon_x>0.
%$$
%From the continuity of $\widehat{\nu}$, there exists $\delta_x>0$ such that
%\begin{equation}\label{(3.9)}
%\sup_k|\widehat{\nu}(y+k)|\geq \frac{1}{4}\epsilon_x>0
%\end{equation}
%for any $\text{dist}(x,y)<\delta_x$.
%As $E$ is a compact set and $E\subset\cup_{x\in E} B(x;\delta_x)$, we have
%$E\subset\cup_{i=1}^M B(x_i,\delta_{x_i})$ for some $M>0$.
%Let $\widetilde{\varepsilon}:=\min_{1\leq i\leq M}\frac{1}{2}\epsilon_{x_i}$.
Lemma \ref{thm3.6} implies that there exist $\widetilde{\varepsilon}$ and $K_0'$ such that
\begin{equation}\label{(4.7)}
|\widehat{\nu}_{m_k}(x+k_{x})|\geq\widetilde{\varepsilon},\;\:\: k\geq K_0'
\end{equation} for any $x\in E$.
%For any $x\in E$, there exists $i$ such that $x\in B(x_i,\delta_{x_i})$, it follows from \eqref{(3.9)} that
%\begin{equation}\label{(3.10)}
%|\widehat{\nu}_{m_k}(x+k_{x_i})|\geq\frac{1}{2}\epsilon_{x_i}-\frac{1}{4}\epsilon_{x_i}\geq\frac{1}{2}\widetilde{\varepsilon},\;k\geq K.
%\end{equation}
Combining \eqref{(4.6)} and \eqref{(4.7)}, $\{\nu_{m_k}\}$ is an equi-positive sequence.
Hence $\mu_{\{p_n,\mathcal{D}_n\}}$ is a spectral measure.
\end{proof}

\subsection{\emph{Completion of the proof of Theorem \ref{thm2.1}}}\label{sect4.3}
\

\begin{proof}[\textbf{Proof of Theorem \ref{thm2.1}}]
(ii) $\Leftrightarrow$ (iii) holds at the end of Section \ref{sect.3}.
Corollary \ref{thm4.4} and Theorem \ref{thm4.8} imply that (ii) $\Rightarrow$ (i).

In the following, we complete the proof by showing (i) $\Rightarrow$ (ii).
If (ii) doesn't hold, by (ii) $\Leftrightarrow$ (iii), we assume that $p_n=\#\mathcal{D}_n$ and $d_n'=\gcd_{j\geq n}\gcd \mathcal{D}_j>1$ for all large $n$.
%Lemma \ref{thm3.7} tells us $T_{p_k,D_k}=[0,1)\setminus\frac{1}{d_k}\mathbb{Z}$, where $d_k:=\gcd D_k$.
%Hence
%$$
%\cup_{k\geq n}T_{p_k,D_k}=[0,1)\setminus(\cap_{k\geq n}\frac{1}{d_k}\mathbb{Z})=[0,1)\setminus\frac{1}{d_n'}\mathbb{Z},
%$$
%where $d_n'=\gcd_{j\geq n}\gcd D_j$.
%Therefore
%$\limsup_{k\rightarrow\infty}T_{p_k,D_k}=(0,1)$ if and only if $d_n'\equiv1$ for all $n\geq1$.
%So the sufficiency follows from Theorem \ref{thm1.3}.
%Now we assume $\xi\in(0,1)\setminus\limsup_{k\rightarrow\infty}T_{p_k,D_k}$.
%There exists $N$ such that $\xi\in(0,1)\setminus\cup_{k\geq n}T_{p_k,D_k}=\frac{1}{d_n'}\mathbb{Z}\cap(0,1)$ for all $n\geq N$.
%So $d_n'>1$.
Now, we show $\mathcal{Z}(\nu_n)\neq\emptyset$.

Choose $\xi_0=\frac{1}{d_n'}$.
For $j\geq n$, let $\mathcal{D}_j=d_n'\mathcal{D}_j'$ for some $\mathcal{D}_j'\subset\mathbb{Z}$.
For any integer $k\geq0$, write $d_n'k=k_1+k_2p_{n+1}+k_3p_{n+1}p_{n+2}+\cdots$, where $k_j\in\{0,1,\cdots,p_j-1\}$ and $k_j\equiv0$ for all large $j$.
Denote $J=\min\{j:k_j\neq p_j-1\}$.
By the proof of Lemma \ref{thm3.3}, we have $\gcd(p_{n+j},\gcd\mathcal{D}_{n+j})=1=\gcd(p_{n+j},d_n')$.
Note that $(p_{n+j},\mathcal{D}_{n+j},L_{n+j})$ forms a Hadamard triple and $p_{n+j}=\#\mathcal{D}_{n+j}$.
Then
$\mathcal{D}_{n+j}'=\mathcal{D}_{n+j}=\{0,1,\cdots,p_{n+j}-1\}\pmod {p_{n+j}}$.
Hence
\begin{equation}\label{(4.8)}
\widehat{\delta_{\mathcal{D}_{n+j}'}}(\frac{k}{p_{n+j}})=\frac{1}{\#\mathcal{D}_{n+j}'}\sum_{d\in\mathcal{D}_{n+j}'}e^{2\pi i\frac{dk}{p_{n+j}}}
=\frac{1}{\#\mathcal{D}_{n+j}'}\sum_{d=0}^{p_{n+j}-1}e^{2\pi i\frac{dk}{p_{n+j}}}=0
\end{equation}
for any $k\in\{1,2,\cdots,p_{n+j}-1\}$.
It follows from \eqref{(4.8)} that
\begin{align*}
\widehat{\nu_{n}}(\xi_0+k)&=\prod_{j=1}^\infty \widehat{\delta_{\mathcal{D}_{n+j}}}((p_{n+1}\cdots p_{n+j})^{-1}(\xi_0+k))
\\
&=\prod_{j=1}^\infty \widehat{\delta_{\mathcal{D}_{n+j}'}}((p_{n+1}\cdots p_{n+j})^{-1}(1+d_n'k))
\\
&=\prod_{j=J}^\infty \widehat{\delta_{\mathcal{D}_{n+j}'}}((p_{n+J}\cdots p_{n+j})^{-1}(1+k_J+p_{n+J}k_{J+1}+\cdots))
\\
&=0.
\end{align*}
Similarly, $\widehat{\nu_n}(\xi_0+k)=0$ for all negative integer $k$, i.e., $\xi_0\in\mathcal{Z}(\nu_n)$,
which implies that $\{\nu_n\}$ doesn't have equi-positive subsequence.
\end{proof}

\bigskip
\section{\bf The singularly and absolutely continuous Cantor-Moran measures\label{sect.5}}
\setcounter{equation}{0}

In this section, we consider the spectrality of the singularly and absolutely continuous Cantor-Moran measures.
We show all that all singularly continuous Cantor-Moran measures are spectral in first subsection.
Later, we study Fuglede's Conjecture on Cantor-Moran set.

\subsection{\emph{The singularly continuous Cantor-Moran spectral measure}\label{subsect5.1}}
\

In this subsection, we explore the relationship between the spectrality and singularity of Cantor-Moran measure.
In particular, we prove Theorem \ref{thm2.2}.

%We will use Wiener theorem to complete the proofs of Theorems \ref{thm1.2} and \ref{thm1.3}.

%For Example \ref{exam1.1}, its spectrality will change when a small change occurs in some integers $p_n$.
%\begin{exam}\label{exam4.2}
%Let $D_1=\{0,1,2\}$ and $D_n=\{0,2,4\}$ for all $n\geq2$.
%Suppose that $$p_n=\begin{cases}
%6, &n\in\{2^k:k\geq1\};
%\\
%3, &ortherwise.
%\end{cases}.$$
%Then the associated Cantor-Moran measure $\mu_{\{p_n,D_n\}}$ is a spectral measure.
%\end{exam}

The following results about fractal geometry need to be known.
\begin{thm}[\cite{fenxing}]\label{thm5.1}
Let $\mu$ be a mass distribution on $\mathbb{R}^n$, let $\mathcal{H}^s$ be $s$-dimensional Hausdorff measure.
Suppose that $F\subset\mathbb{R}^n$ is a Borel set and $0<c<\infty$ is a constant.

If $\overline{\lim}_{r\rightarrow0}\mu(B(x,r))/r^s<c$ for all $x\in F$, then $\mathcal{H}^s(F)\geq \mu(F)/c$.
\end{thm}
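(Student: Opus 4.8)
The plan is to establish Theorem \ref{thm5.1} by the classical mass--distribution argument, turning the pointwise upper-density hypothesis into a uniform covering estimate over an exhausting sequence of subsets of $F$. First I would unpack the hypothesis: since $\overline{\lim}_{r\rightarrow0}\mu(B(x,r))/r^s<c$ for every $x\in F$, each such $x$ admits a threshold $r_0(x)>0$ with $\mu(B(x,r))<cr^s$ for all $0<r<r_0(x)$. For $m\in\mathbb{N}$ I then set
$$
F_m:=\Big\{x\in F:\mu(B(x,r))<cr^s\ \text{for all}\ 0<r<\tfrac1m\Big\}.
$$
These satisfy $F_1\subseteq F_2\subseteq\cdots$ and $\bigcup_m F_m=F$, so by continuity of $\mu$ from below one gets $\mu(F_m)\to\mu(F)$ (if one prefers to sidestep the Borel measurability of $F_m$, the same continuity holds for the regular outer measure $\mu^\ast$, and I would run the argument with $\mu^\ast$ throughout).

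The core step is to convert an arbitrary fine cover of $F_m$ into a sum controlled by $\mu$. Fix $m$, let $0<\delta<1/m$, and let $\{U_i\}$ be any $\delta$-cover of $F_m$. Discard those $U_i$ missing $F_m$, and for the rest pick $x_i\in U_i\cap F_m$; since $x_i\in U_i$ and $\mathrm{diam}\,U_i=|U_i|$, we have $U_i\subseteq\overline{B}(x_i,|U_i|)$. As $x_i\in F_m$ and $|U_i|\le\delta<1/m$, continuity of $\mu$ from above gives $\mu(\overline{B}(x_i,|U_i|))=\lim_{\varepsilon\downarrow0}\mu(B(x_i,|U_i|+\varepsilon))\le c\,|U_i|^s$, hence $\mu(U_i)\le c\,|U_i|^s$. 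Summing yields
$$
\mu(F_m)\le\sum_i\mu(U_i)\le c\sum_i|U_i|^s,
$$
and taking the infimum over all $\delta$-covers gives $\mu(F_m)\le c\,\mathcal{H}^s_\delta(F_m)\le c\,\mathcal{H}^s(F_m)\le c\,\mathcal{H}^s(F)$. Letting $m\to\infty$ and invoking $\mu(F_m)\to\mu(F)$ produces $\mu(F)\le c\,\mathcal{H}^s(F)$, i.e.\ $\mathcal{H}^s(F)\ge\mu(F)/c$.

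The points deserving genuine care, and where the estimate can silently degrade, are the two limiting passages in the measure. The delicate one is replacing the open-ball bound by the closed-ball bound: this must go through $\mu(\overline{B}(x_i,|U_i|))=\lim_{\varepsilon\downarrow0}\mu(B(x_i,|U_i|+\varepsilon))$, keeping $|U_i|+\varepsilon<1/m$, so that no spurious constant appears; the naive inclusion $U_i\subseteq B(x_i,2|U_i|)$ would cost a factor $2^s$ and spoil the sharp constant $c$. The second is merely bookkeeping: either verify that $x\mapsto\mu(B(x,r))$ is lower semicontinuous (so each $F_m$ is Borel), or replace $\mu$ by $\mu^\ast$ as noted above. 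I do not expect to need a Vitali- or Besicovitch-type covering lemma for the cited statement itself: the argument uses only that every $\delta$-cover of $F_m$ is, set by set, dominated by $\mu$ of an enclosing ball. A covering lemma enters only in the paper's subsequent \emph{extension} of this principle, not in Theorem \ref{thm5.1}.
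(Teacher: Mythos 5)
Your proposal is correct: the paper gives no proof of Theorem \ref{thm5.1} at all, citing it as a result from Falconer \cite{fenxing} (Proposition 4.9 there), and your argument --- the exhaustion $F_m$, the $\delta$-cover estimate $\mu(U_i)\le c\,|U_i|^s$ via the closed-ball limit, then letting $m\to\infty$ --- is precisely the standard proof in that reference, with the measurability and open-vs-closed-ball points handled soundly. You are also right that no covering lemma is needed for this statement; it enters only in the paper's extension, Lemma \ref{thm5.2}.
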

The following lemma is an extension of Proposition 4.9 in \cite{fenxing}.
\begin{lem}\label{thm5.2}
Let $\mu$ be a finite Borel measure on $\mathbb{R}^n$. Suppose that $F\subset\mathbb{R}^n$ is a Borel set and $c(r)$ is a positive function with $c(r)\rightarrow\infty$ as $r\rightarrow0^+$.

If $\mu(B(x,r))/r^s\geq c(r)$ for all $x\in F$ and all $r>0$, then $\mathcal{H}^s(F)=0$.
\end{lem}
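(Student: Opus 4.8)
The plan is to adapt the proof of the density upper bound in Falconer's Proposition~4.9 (the counterpart of Theorem~\ref{thm5.1}), with the twist that here the density lower bound $c(r)$ tends to $\infty$ as $r\to0^+$ rather than merely exceeding a fixed constant. The engine of the argument is a covering lemma of Vitali type, which lets one pass from an arbitrarily fine family of balls to a disjoint subfamily whose modest dilations still cover $F$. Throughout, $\mathcal{H}^s_\eta$ denotes the size-$\eta$ approximating Hausdorff pre-measure, i.e. the infimum of $\sum_i|U_i|^s$ over all countable covers $\{U_i\}$ of $F$ with $|U_i|\leq\eta$, so that $\mathcal{H}^s(F)=\lim_{\eta\to0^+}\mathcal{H}^s_\eta(F)$.

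First I would fix $\eta>0$ together with an arbitrary constant $c>0$. Since $c(r)\to\infty$ as $r\to0^+$, I can choose $\delta>0$ with $10\delta\leq\eta$ such that $c(r)\geq c$ for all $0<r<\delta$. The hypothesis then gives, for every $x\in F$ and every $0<r<\delta$,
\[
\mu(B(x,r))\geq c(r)\,r^s\geq c\,r^s .
\]
Next I would consider the family $\mathcal{B}=\{B(x,r):x\in F,\ 0<r<\delta\}$, which covers $F$ by balls of arbitrarily small radius, and apply the covering lemma to extract a countable \emph{disjoint} subfamily $\{B(x_i,r_i)\}_i\subset\mathcal{B}$ with $F\subset\bigcup_i B(x_i,5r_i)$. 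Each ball $B(x_i,5r_i)$ has diameter $10r_i<10\delta\leq\eta$, so this enlarged family is an admissible cover for $\mathcal{H}^s_\eta$, and using the disjointness of the $B(x_i,r_i)$ together with the density bound,
\[
\mathcal{H}^s_\eta(F)\leq\sum_i(10r_i)^s
=10^s\sum_i r_i^s
\leq\frac{10^s}{c}\sum_i\mu\bigl(B(x_i,r_i)\bigr)
\leq\frac{10^s}{c}\,\mu(\mathbb{R}^n).
\]
Because $\mu$ is finite and $c>0$ is arbitrary, letting $c\to\infty$ forces $\mathcal{H}^s_\eta(F)=0$ for every $\eta>0$; letting $\eta\to0^+$ then yields $\mathcal{H}^s(F)=0$.

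The step I expect to be the crux is the application of the covering lemma, specifically verifying that the selected disjoint balls, once dilated by the factor $5$, still cover all of $F$, and that the passage $\sum_i r_i^s\leq\mu(\mathbb{R}^n)/c$ genuinely uses both the disjointness and the fact that $r_i<\delta$ (so that the uniform bound $c$ rather than the pointwise $c(r)$ may be used). The hypothesis that the density bound holds for \emph{all} $r>0$, not just along a sequence, is what makes $\mathcal{B}$ a bona fide fine cover and renders the fineness requirement of the covering lemma automatic; this is precisely the feature that, combined with $c(r)\to\infty$, upgrades the classical conclusion $\mathcal{H}^s(F)\leq 10^s\mu(\mathbb{R}^n)/c$ all the way down to $\mathcal{H}^s(F)=0$.
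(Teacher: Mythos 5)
Your proof is correct and follows essentially the same route as the paper: extract a disjoint subfamily via a Vitali-type covering lemma, cover $F$ by the dilated balls, and use disjointness plus $\mu(\mathbb{R}^n)<\infty$ to bound $\sum_i r_i^s$ by $\mu(\mathbb{R}^n)/c$, letting $c\to\infty$. The only (cosmetic) difference is that you invoke the $5r$-covering lemma, which needs only bounded radii and thus lets you skip the paper's reduction to bounded $F$, whereas the paper uses Falconer's Lemma~4.8 (dilation factor $4$, balls in a bounded region) and disposes of unbounded $F$ in a separate final step.
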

\begin{proof}
Suppose that $F$ is bounded.
For any large $M>0$, there exists $R>0$ such that $c(r)\geq M$ for any $0<r<R$.
Fix $\delta\in(0,R)$ and let $\mathcal{C}$ be the collection of balls
$$
\{B(x,r):x\in F, 0<r\leq \delta\;\text{and}\;\mu(B(x,r))\geq c(r)r^s\}.
$$
By the hypothesis, we have $F\subset \cup_{B\in\mathcal{C}}B$.
Applying the Covering lemma (\cite{fenxing}, Lemma 4.8) to the collection $\mathcal{C}$, there is a sequence of disjoint balls $B_i\in\mathcal{C}$ such that $\cup_{B\in\mathcal{C}}B\subset\cup_i\widetilde{B}_i$, where $\widetilde{B}_i$ is the ball concentric with $B_i$ but of
four times the radius.
Hence $\{\widetilde{B}_i\}_{i=1}^\infty$ is an $8\delta$-cover of $F$, so
$$
\mathcal{H}_{8\delta}^s(F)\leq \sum_i|\widetilde{B}_i|^s\leq 4^s\sum_{i}|B_i|^s\leq 4^s\sum_i\frac{\mu(B_i)}{c(|B_i|)}\leq \frac{4^s}{M}\mu(\mathbb{R}^n),
$$
where $|B_i|$ is the diameter of $B_i$.
Letting $\delta\rightarrow0$, then
$\mathcal{H}^s(F)\leq \frac{4^s\mu(\mathbb{R}^n)}{M}$.
Thus $\mathcal{H}^s(F)=0$ by setting $M\rightarrow\infty$.

Finally, if $F$ is unbounded and $\mathcal{H}^s(F)>0$, the $\mathcal{H}^s$-measure of some bounded
subset of $F$ will also exceed $0$, contrary to the above.
\end{proof}
\begin{thm}\label{thm5.3}
Let $\{(p_n, \mathcal{D}_n, L_n)\}$ be a sequence of Hadamard triples with $\sup_n\{p_n^{-1}d:d\in\mathcal{D}_n\}<\infty$ and $\sup_n\#\mathcal{D}_n<\infty.$
Then the associated Cantor-Moran measure
$$
\mu_{\{p_n,\mathcal{D}_n\}}=\delta_{p_1^{-1}\mathcal{D}_1}\ast\cdots\ast\delta_{p_1^{-1}p_2^{-1}\mathcal{D}_2}\ast\cdots
$$
is singular continuous with respect to the Lebesgue measure if and only if $\#\{n:p_n>\#\mathcal{D}_n\}=\infty$.
\end{thm}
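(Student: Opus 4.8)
The plan is to prove the equivalence through two one-sided density estimates against $\mathcal{H}^1$ (which on $\mathbb{R}$ coincides with Lebesgue measure), exploiting that $\mu:=\mu_{\{p_n,\mathcal{D}_n\}}$ is of \emph{pure type}. First I would record the standing structure. Since each $(p_n,\mathcal{D}_n,L_n)$ is a Hadamard triple we have $p_n\ge N_n:=\#\mathcal{D}_n$, so at every level either $p_n=N_n$ or $p_n>N_n$; write $N^*:=\sup_n N_n<\infty$ and $C:=\sup_n\max_{d\in\mathcal{D}_n}p_n^{-1}d<\infty$. Because infinitely many factors are non-degenerate and the digits are bounded, $\mu$ has no atoms, and by the Jessen--Wintner law of pure types an infinite convolution of discrete measures is purely discrete, purely singular continuous (SC), or purely absolutely continuous (AC); hence $\mu$ is either SC or AC. For each $n$ I factor $\mu=\mu_n*\nu_n$, where $\mu_n=\delta_{p_1^{-1}\mathcal{D}_1}*\cdots*\delta_{(p_1\cdots p_n)^{-1}\mathcal{D}_n}$ is discrete with $N_1\cdots N_n$ atoms of mass $(N_1\cdots N_n)^{-1}$ (up to multiplicity), and $\mathrm{diam}\,\mathrm{spt}\,\nu_n\le 2C(p_1\cdots p_n)^{-1}$. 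The natural cylinder scale at level $n$ is thus $(p_1\cdots p_n)^{-1}$, and the heart of the matter is the local density ratio $\prod_{j=1}^n p_j/N_j$.

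For the direction $\#\{n:p_n>N_n\}=\infty\Rightarrow$ SC I would use a lower density blow-up together with Lemma~\ref{thm5.2}. Fix $x\in T:=\mathrm{spt}\,\mu$ and small $r>0$, and let $n=n(r)$ be the least integer with $2C(p_1\cdots p_n)^{-1}\le r$. The level-$n$ cylinder containing $x$ has diameter $\le r$, hence lies in $B(x,r)$, so $\mu(B(x,r))\ge(N_1\cdots N_n)^{-1}$; minimality gives $r<2C(p_1\cdots p_{n-1})^{-1}$, whence
$$\frac{\mu(B(x,r))}{r}\ \ge\ \frac{1}{2C\,N_n}\prod_{j=1}^{n-1}\frac{p_j}{N_j}\ \ge\ \frac{1}{2CN^*}\Big(1+\tfrac1{N^*}\Big)^{\#\{j<n:\,p_j>N_j\}}=:c(r).$$
Each strict index contributes a factor $\ge 1+1/N^*>1$, and there are infinitely many of them, so $c(r)\to\infty$ as $r\to0$. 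Lemma~\ref{thm5.2} with $s=1$ then yields $\mathcal{H}^1(T)=0$; since $\mu(T)=1$, $\mu$ is singular, and being non-atomic it is singular continuous.

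For the converse, assume $\#\{n:p_n>N_n\}<\infty$, so $p_n=N_n$ for all $n\ge n_0$. Writing $\mu=\sigma*\tau$ with $\sigma$ the finite discrete convolution of the first $n_0$ factors, absolute continuity of $\mu$ reduces to that of $\tau=\delta_{p_{n_0+1}^{-1}\mathcal{D}_{n_0+1}}*\cdots$, a Cantor--Moran measure with $p=\#\mathcal{D}$ at \emph{every} level. For such $\tau$ I plan to establish a uniform upper density bound $\overline{\lim}_{r\to0}\tau(B(x,r))/r\le c_0<\infty$. The key structural input is that, when $p=\#\mathcal{D}$ for a Hadamard triple, each digit set is a complete residue system $\mathcal{D}\equiv\{0,\dots,p-1\}\pmod p$ (cf. Lemma~\ref{thm3.2}); consequently the level-$n$ atoms $a_{\mathbf d}=\sum_{j\le n}(p_1\cdots p_j)^{-1}d_j$ are, via a mixed-radix expansion, a bounded perturbation of the uniform grid $\{k(p_1\cdots p_n)^{-1}\}$, the displacement lying in $[0,2C]$ because $d\le Cp$. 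Bounding $\tau(B(x,r))\le\tau_n(I)$ for an interval $I$ of length $\lesssim r$ with $(p_1\cdots p_n)^{-1}\approx r$, I would estimate $\tau_n(I)=(\#\{\mathbf d:a_{\mathbf d}\in I\})/(p_1\cdots p_n)$ by controlling near-coincidences $|a_{\mathbf d}-a_{\mathbf e}|\lesssim(p_1\cdots p_n)^{-1}$ through the first index at which $\mathbf d$ and $\mathbf e$ differ, the bounded digits forcing bounded overlap multiplicity; this gives $\tau_n(I)\le c_0|I|$. Applying Theorem~\ref{thm5.1} to $F=(\mathrm{spt}\,\tau)\cap E$ for any Lebesgue-null $E$ (with any $c>c_0$) then forces $\tau(E)=0$ via $\mathcal{H}^1(F)=0$, so $\tau$, and hence $\mu$, is absolutely continuous.

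\textbf{Main obstacle.} The singular direction is routine once Lemma~\ref{thm5.2} is available, since a single cylinder already supplies the lower mass bound. The genuine difficulty is the uniform upper density estimate $\tau_n(I)\le c_0|I|$ in the AC direction. Unlike the classical situation $\mathcal{D}\subset\{0,\dots,p-1\}$, here digits may exceed $p$, so distinct level-$n$ cylinders can overlap, and I must show these overlaps have \emph{bounded} multiplicity uniformly in $n$ rather than accumulating (which would create a singular part). Making the ``bounded perturbation of a uniform grid'' picture quantitative --- and doing so simultaneously at \emph{all} scales $r$, not merely the discrete cylinder scales $(p_1\cdots p_n)^{-1}$ --- is where the complete-residue-system structure and the digit bound $d\le Cp$ must be combined; this is the technical core of the proof.
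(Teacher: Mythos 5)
Your overall route is the paper's own: a cylinder mass lower bound fed into Lemma \ref{thm5.2} for the singular direction, and a uniform upper density bound fed into Theorem \ref{thm5.1} for the converse; the Jessen--Wintner pure-type reduction you invoke is harmless but not needed, since the density bounds prove singularity and absolute continuity directly. Your singular direction is complete, and in one respect cleaner than the paper's write-up: by comparing $\mu(B(x,r))$ with $r<2C(p_1\cdots p_{n-1})^{-1}$ you never need $\sup_n p_n<\infty$, whereas the paper's displayed estimate $\frac{1}{p_1\cdots p_{s_r+k_0}}\geq \frac{r}{P^{k_0+1}}$ quietly uses $P=\sup_i p_i$, which is not guaranteed finite in that direction. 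Likewise your finish of the AC direction --- applying Theorem \ref{thm5.1} to $F=(\mathrm{spt}\,\tau)\cap E$ for Lebesgue-null $E$ to get $\tau(E)=0$ --- is a tighter conclusion than the paper's, which only records $\mathcal{H}^1(T)>0$ and then asserts non-singularity.

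The one genuine gap is exactly the step you flag as the ``technical core'': you leave the uniform bound $\tau_n(I)\le c_0|I|$ as an unproved claim about bounded overlap multiplicity. But this is where the argument is easiest, and the ingredient you already named --- the complete residue system --- settles it with multiplicity exactly \emph{one}, not merely bounded. Indeed, if $p_j=\#\mathcal{D}_j$ and $(p_j,\mathcal{D}_j,L_j)$ is a Hadamard triple, two digits congruent mod $p_j$ would produce equal columns in the Hadamard matrix, so $\mathcal{D}_j$ is a complete residue system mod $p_j$; hence the map $\mathbf{d}\mapsto a_{\mathbf d}=\sum_{j\le n}(p_1\cdots p_j)^{-1}d_j$ is injective (multiply by $p_1\cdots p_n$ and read the resulting integer mod $p_n$ to recover $d_n$, then induct downward). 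Consequently the level-$n$ atoms are \emph{distinct} points of the lattice $\frac{1}{p_1\cdots p_n}\mathbb{Z}$, each of mass exactly $(p_1\cdots p_n)^{-1}$, and for $|I|\ge(p_1\cdots p_n)^{-1}$ one gets $\tau_n(I)\le \bigl(|I|\,p_1\cdots p_n+1\bigr)(p_1\cdots p_n)^{-1}\le 2|I|$ by counting lattice points --- no analysis of near-coincidences through the first differing index is needed, and your worry about overlaps ``accumulating'' is vacuous. The passage from cylinder scales to all scales $r$ is the same bookkeeping as in your own singular direction: choose $n$ with $(p_1\cdots p_n)^{-1}$ comparable to $r$ (the paper takes $k_0=[\log_2 4c]+1$ extra levels so the tail has diameter at most $r/2$) and absorb the tail into an interval of length comparable to $r$. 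This lattice count is precisely the paper's necessity argument; with it inserted, your proposal becomes a complete proof along the same lines as the paper's.
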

\begin{proof}
%Before our proof, we do some preparation.
Note that the support of $\mu_{\{p_n,\mathcal{D}_n\}}$ is
$$
T=\left\{\sum_{j=1}^\infty\frac{d_j}{p_1\cdots p_j}:d_j\in \mathcal{D}_j\right\}.
$$
For any $x\in T$, write
$$
x=\sum_{j=1}^\infty\frac{d_j}{p_1\cdots p_j},
$$
where $d_j\in \mathcal{D}_j$.
For any small $r>0$, let
$$
\frac{1}{p_1\cdots p_{s_r}}\leq r<\frac{1}{p_1\cdots p_{s_r-1}}
$$
for some integer $s_r\geq1$.
Put $c:=\sup_n\{\frac{d}{p_n}:d\in \mathcal{D}_n\}$, $k_0:=[\log_24c]+1$ and $x_r=\sum_{j=1}^{s_r+k_0}\frac{d_j}{p_1\cdots p_j}$.
Note that
$$
|x-x_r|\leq\sum_{j=s_r+k_0+1}^{\infty}\frac{d_j}{p_1\cdots p_j}\leq \frac{2c}{p_1\cdots p_{s_r+k_0}}\leq\frac{2c}{2^{k_0}p_1\cdots p_{s_r}}\leq\frac{1}{2p_1\cdots p_{s_r}}\leq \frac{1}{2}r.
$$
Thus
\begin{equation}\label{(5.1)}
\mu_{\{p_n,\mathcal{D}_n\}}(B(x_r;\frac12r))\leq\mu_{\{p_n,\mathcal{D}_n\}}(B(x;r))\leq \mu_{\{p_n,\mathcal{D}_n\}}(B(x_r;\frac{3}{2}r)).
\end{equation}

\bigskip
\par Now we prove the necessity by a contradiction.
Without loss of generality, we assume $p_n=\#\mathcal{D}_n$ for any $n\geq 1$, which implies that $\sup_n p_n<\infty$.
Note that
$$
\#(B(x_r;2r)\cap\frac{\mathbb{Z}}{p_1\cdots p_{s_r+k_0}})\leq 4p_{s_r}\cdots p_{s_r+k_0}
$$
and
$$
\mu_{\{p_n,\mathcal{D}_n\}}=\delta_{p_1^{-1}\mathcal{D}_1}\ast\cdots\ast\delta_{(p_1\cdots p_{s_r+k_0})^{-1}\mathcal{D}_{s_r+k_0}}\ast\delta_{(p_1\cdots p_{s_r+k_0+1})^{-1}\mathcal{D}_{s_r+k_0+1}}\ast\cdots.
$$
Let $P:=\sup_ip_i$ and $T_1:=\{\sum_{j=s_r+k_0+1}^\infty\frac{d_j}{p_1\cdots p_{j}}:d_j\in \mathcal{D}_j\}$.
It follows from \eqref{(5.1)} and
$$
T\cap B(x_r;\frac{3}{2}r)\subset B(x_r,2r)\cap\frac{\mathbb{Z}}{p_1\cdots p_{s+k_0}}+T_1,
$$
that
$$
\mu_{\{p_n,\mathcal{D}_n\}}(B(x;r))\leq \mu_{\{p_n,D_n\}}(B(x_r;\frac{3}{2}r))\leq \frac{4p_{s_r}\cdots p_{s_r+k_0}}{p_1\cdots p_{s_r+k_0}}\leq4p_{s_r}r\leq 4Pr.
$$
Hence $\mathcal{H}^1(T)\geq \mu(T)/(4P)=\frac{1}{4P}$ by Theorem \ref{thm5.1},
which implies that $\mu_{\{p_n,\mathcal{D}_n\}}$ is not singular continuous with respect to the Lebesgue measure.

\bigskip
\par In the following, we prove the sufficiency.

Let $\{n:p_n>\#\mathcal{D}_n\}=\{n_1,n_2,\cdots,n_k,\cdots\}$ with $n_1<n_2<\cdots$.
It follows from \eqref{(5.1)} that
\begin{align*}
\mu_{\{p_n,\mathcal{D}_n\}}(B(x;r))\geq\mu_{\{p_n,\mathcal{D}_n\}}(B(x_r;\frac12r))&\geq \frac{1}{\#\mathcal{D}_1\#\mathcal{D}_2\cdots\#\mathcal{D}_{s_r+k_0}}
\\
&=\frac{1}{p_1\cdots p_{s_r+k_0}}\times
\frac{p_1\cdots p_{s_r+k_0}}{\#\mathcal{D}_1\#\mathcal{D}_2\cdots\#\mathcal{D}_{s_r+k_0}}
\\
&\geq \frac{r}{P^{k_0+1}}\prod_{j=1}^{k_r}\frac{p_{n_j}}{\#\mathcal{D}_{n_j}}
\\
&\geq \frac{r}{P^{k_0+1}}(1+\frac1N)^{k_r},
\end{align*}
where $n_1<n_2<\cdots<n_{k_r}\leq s_r+k_0<n_{k_r+1}$ and $N:=\sup_j\#\mathcal{D}_j$.
Let $c(r)=\frac{1}{P^{k_0+1}}(1+\frac1N)^{k_r}$.
It is easy to see that $c(r)\rightarrow\infty$ as $r\rightarrow0$.

Therefore, by Lemma \ref{thm5.2}, $\mathcal{H}^1(T)=0$.
The proof is complete.
\end{proof}

\begin{proof}[\bf Proof of Theorem \ref{thm2.2}]
The theorem is obtained directly from Theorems \ref{thm5.3}, \ref{thm2.1} and Lemma \ref{thm3.2}.
\end{proof}

\subsection{\emph{Fuglede's Conjecture on Cantor-Moran set}\label{subsect5.2}}
\

In this subsection, we consider Fuglede's Conjecture on Cantor-Moran set.
Specially, we provide the proof of Theorem \ref{thm2.3}.

It is well known that spectral and tiling properties do not change under similar transformations.
We provide a criterion that is effective for Moran sets(Lemma \ref{thm5.6}).
The following theorem is a basic tool for the spectrality of measure.
\begin{thm}[\cite{Jorgenson-Pederson_1998}]\label{thm6.1}
Let $\mu$ be a probability measure with a compact support, and let $\Lambda\subset \mathbb{R}^n$ be a countable subset.
Then $E_{\Lambda}$ is an orthogonal set of $L^{2}(\mu)$ if and only if
$$Q_\Lambda(\xi):=\sum_{\lambda\in\Lambda}|\widehat{\mu}(\xi+\lambda)|^2\leq1
$$
for $\xi\in\mathbb{R}^n$; and $Q_\Lambda(\xi)$ is an entire function in $\mathbb{C}^n$.
In particular, it is an orthogonal basis if and only if $Q_\Lambda(\xi)\equiv1$ for $\xi\in\mathbb{R}^n$.
\end{thm}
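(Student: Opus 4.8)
The plan is to read $Q_\Lambda$ as a sum of squared Fourier coefficients of a single test exponential and then let Bessel's inequality and Parseval's identity do all the work. First I would record the reductions coming from $\mu$ being a \emph{probability} measure: each exponential $e_\lambda(x)=e^{2\pi i\langle\lambda,x\rangle}$ satisfies $\|e_\lambda\|_{L^2(\mu)}^2=\mu(\mathbb{R}^n)=1$, so the orthogonal family $E_\Lambda$ is automatically orthonormal, and (as already noted in Section \ref{sect.2}) orthogonality is equivalent to the bi-zero condition $\widehat{\mu}(\lambda-\lambda')=0$ for all distinct $\lambda,\lambda'\in\Lambda$. I would also use throughout that, $\mu$ being positive, $\overline{\widehat{\mu}(t)}=\widehat{\mu}(-t)$.

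Next I would prove the equivalence ``$E_\Lambda$ orthogonal $\iff Q_\Lambda\le1$ on $\mathbb{R}^n$''. For the forward direction, fix $\xi\in\mathbb{R}^n$ and apply Bessel's inequality to $e_{-\xi}\in L^2(\mu)$ against the orthonormal system $\{e_\lambda\}$: since $\langle e_{-\xi},e_\lambda\rangle=\int e^{-2\pi i\langle\xi+\lambda,x\rangle}\,d\mu=\overline{\widehat{\mu}(\xi+\lambda)}$, one obtains $Q_\Lambda(\xi)=\sum_\lambda|\widehat{\mu}(\xi+\lambda)|^2=\sum_\lambda|\langle e_{-\xi},e_\lambda\rangle|^2\le\|e_{-\xi}\|^2=1$. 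For the converse I would simply evaluate at $\xi=-\lambda_0$ for a fixed $\lambda_0\in\Lambda$: the $\lambda=\lambda_0$ term equals $|\widehat{\mu}(0)|^2=1$, so $1\ge Q_\Lambda(-\lambda_0)=1+\sum_{\lambda\neq\lambda_0}|\widehat{\mu}(\lambda-\lambda_0)|^2$ forces $\widehat{\mu}(\lambda-\lambda_0)=0$ for every $\lambda\neq\lambda_0$, which is exactly orthogonality. This part is the cleanest, and notably the reverse direction needs only $Q_\Lambda\le1$ on $\mathbb{R}^n$, the entirety being inherited afterward.

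The entirety claim is where I expect the real work, so I would treat it next. Since $\mu$ is supported in some ball of radius $R$, its Fourier transform extends (Paley--Wiener) to an entire function on $\mathbb{C}^n$ of exponential type with $|\widehat{\mu}(z)|\le e^{2\pi R|\operatorname{Im}z|}$. On the real axis $|\widehat{\mu}(\xi+\lambda)|^2=\widehat{\mu}(\xi+\lambda)\,\widehat{\mu}(-\xi-\lambda)$, so the genuine holomorphic candidate is $\widetilde{Q}_\Lambda(z)=\sum_\lambda\widehat{\mu}(z+\lambda)\,\widehat{\mu}(-z-\lambda)$, each summand entire and agreeing with $Q_\Lambda$ for real argument. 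To show the series converges locally uniformly (hence $\widetilde{Q}_\Lambda$ is entire by Weierstrass's theorem), I would bound, via Cauchy--Schwarz, $\sum_\lambda|\widehat{\mu}(z+\lambda)\widehat{\mu}(-z-\lambda)|\le\bigl(\sum_\lambda|\widehat{\mu}(z+\lambda)|^2\bigr)^{1/2}\bigl(\sum_\lambda|\widehat{\mu}(-z-\lambda)|^2\bigr)^{1/2}$, and then control each factor by applying Bessel's inequality to the bounded exponentials $e^{\pm 2\pi i\langle z,\cdot\rangle}\in L^2(\mu)$ exactly as above; this yields a bound depending only on $\sup|\operatorname{Im}z|$ over a compact set. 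The delicate point, and the main obstacle, is precisely this step: one must carefully distinguish the genuinely holomorphic extension $\widehat{\mu}(z)\widehat{\mu}(-z)$ from the real-analytic-but-not-holomorphic $|\widehat{\mu}(z)|^2$, and use orthonormality to get the Bessel bound that forces convergence off the real axis.

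Finally, for the basis characterization, I would invoke the standard Hilbert-space fact that the orthonormal system $\{e_\lambda\}$ is complete if and only if Parseval's identity $\sum_\lambda|\langle f,e_\lambda\rangle|^2=\|f\|^2$ holds for every $f$ in a total subset of $L^2(\mu)$, since Parseval for a given $f$ is equivalent to $f$ lying in the closed span of $\{e_\lambda\}$. As the total set I would take $\{e_{-\xi}:\xi\in\mathbb{R}^n\}$, which is total by Stone--Weierstrass: on the compact support $K$ these exponentials form a point-separating, conjugation-closed algebra containing the constants, hence dense in $C(K)$ and therefore in $L^2(\mu)$. Because Parseval for $e_{-\xi}$ reads precisely $Q_\Lambda(\xi)=\|e_{-\xi}\|^2=1$, completeness of $E_\Lambda$ is equivalent to $Q_\Lambda\equiv1$ on $\mathbb{R}^n$, which finishes the proof.
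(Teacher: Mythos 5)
The paper does not actually prove this statement: Theorem \ref{thm6.1} is quoted from Jorgensen--Pedersen \cite{Jorgenson-Pederson_1998} and used as a black box, so there is no internal proof to compare against; your argument must be judged on its own, and it is essentially the standard Jorgensen--Pedersen argument. The Bessel step for $Q_\Lambda\le1$, the evaluation at $\xi=-\lambda_0$ for the converse, the Stone--Weierstrass totality of $\{e_{-\xi}\}$ turning Parseval into the completeness criterion $Q_\Lambda\equiv1$, and the correct reading of ``entire'' as the statement that $z\mapsto\sum_\lambda\widehat{\mu}(z+\lambda)\,\widehat{\mu}(-z-\lambda)$ is holomorphic on $\mathbb{C}^n$ and restricts to $Q_\Lambda$ on $\mathbb{R}^n$ (rather than the non-holomorphic $|\widehat{\mu}(z)|^2$) are all right. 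The one step you should repair is the passage from your Cauchy--Schwarz/Bessel estimate to locally uniform convergence: that estimate gives pointwise absolute convergence and \emph{local uniform boundedness} of the holomorphic partial sums, not a summable uniform majorant, so Weierstrass's theorem does not apply as stated --- a series of positive terms can have uniformly bounded sums while its tails fail to be uniformly small, and a Dini-type argument is blocked because continuity of the limit is not known in advance. The standard fix is the Vitali--Porter/Montel theorem: the partial sums $S_N(z)=\sum_{j\le N}\widehat{\mu}(z+\lambda_j)\widehat{\mu}(-z-\lambda_j)$ are holomorphic, locally uniformly bounded by your Bessel bound $e^{4\pi R|\operatorname{Im}z|}$, and converge pointwise everywhere by absolute convergence, hence converge locally uniformly to an entire function. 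With that one substitution your proof is complete.
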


\begin{lem}\label{thm5.6}
Let $p_n$ be an integer with $|p_n|>1$, and let digit set $D_n,\widetilde{D}_n\subset\Bbb Z$ satisfy
$$\max\big\{\sum_{n=1}^\infty p_1^{-1}\cdots p_n^{-1}|d|:d\in D_n\big\}<\infty.
$$
Suppose that real number $Q\neq0$.
Then the associated Moran measure $\mu_{\{p_n,D_n\}}$ is a spectral measure with a spectrum $\Lambda$ if and only if $\mu_{\{p_n,Q^{-1}D_n\}}$ is a spectral measure with a spectrum $Q\Lambda$.
\end{lem}
\begin{proof}
Let $\Lambda$ be a spectrum of $\mu_{\{p_n,D_n\}}$.
It follows from Theorem \ref{thm6.1} that
$$
\sum_{\lambda\in\Lambda}|\widehat{\mu_{\{p_n,D_n\}}}(\xi+\lambda)|^2\equiv1
$$
for any $\xi\in\Bbb R$.
Then
\begin{align*}
\sum_{\lambda\in Q\Lambda}|\widehat{\mu_{\{p_n,Q^{-1}D_n\}}}(\xi+\lambda)|^2
&=\sum_{\lambda\in \Lambda}|\prod_{n=1}^\infty\widehat{\delta_{D_n}}(Q^{-1}p_n^{-1}\cdots p_1^{-1}(\xi+Q\lambda))|^2
\\
&=\sum_{\lambda\in \Lambda}|\prod_{n=1}^\infty\widehat{\delta_{D_n}}(p_n^{-1}\cdots p_1^{-1}(Q^{-1}\xi+\lambda))|^2
\equiv1
\end{align*}
for any $\xi\in\Bbb R$.
Using Theorem \ref{thm6.1} again, one has $Q\Lambda$ is a spectrum of $\mu_{\{p_n,Q^{-1}D_n\}}$.
\end{proof}
By directly applying the above lemma, we obtain the following result.
\begin{prop}\label{thm5.7}
Let $\{(p_n, \mathcal{D}_n, L_n)\}$ be a sequence of Hadamard triples with $\sup_n\{p_n^{-1}d:d\in\mathcal{D}_n\}<\infty$ and $\sup_n\#\mathcal{D}_n<\infty$, and let Cantor-Moran measure
$$
\mu_{\{p_n,\mathcal{D}_n\}}=\delta_{p_1^{-1}\mathcal{D}_1}\ast\delta_{p_1^{-1}p_2^{-1}\mathcal{D}_2}\ast\cdots,
$$
where $\gcd_{j\geq 1}\{\gcd \mathcal{D}_j\}=d$.
Then $\mu_{\{p_n,\mathcal{D}_n\}}$ is a spectral measure if and only if $\mu_{\{p_n,\mathcal{D}_n'\}}$ is a spectral measure,
where $\mathcal{D}_n'=\frac1d\mathcal{D}_n$ and $\gcd_{j\geq 1}\{\gcd \mathcal{D}_j'\}=1$.
\end{prop}
The above proposition ensures that our hypothesis $\gcd_{j\geq 1}\{\gcd \mathcal{D}_j\}=1$ of Theorem \ref{thm2.3} is reasonable.
In the following, we investigate Fuglede's Conjecture on the Cantor-Moran sets.
Recall that a fundamental domain of a lattice $\Bbb Z$ is a set $\Omega$ such that $\cup_{l\in \Bbb Z}(\Omega+l)$ tiles $\Bbb R$ almost everywhere, i.e., $\Omega\oplus\Bbb Z=\Bbb R$.

\begin{thm}\label{thm5.4}
Let $\{(p_n, \mathcal{D}_n, L_n)\}$ be a sequence of Hadamard triples with $\sup_n\{p_n^{-1}d:d\in\mathcal{D}_n\}<\infty$ and $\sup_n\#\mathcal{D}_n<\infty$, and let Cantor-Moran measure
$$
\mu_{\{p_n,\mathcal{D}_n\}}=\delta_{p_1^{-1}\mathcal{D}_1}\ast\delta_{p_1^{-1}p_2^{-1}\mathcal{D}_2}\ast\cdots.
$$
Suppose that $p_n=\#\mathcal{D}_n$ and $\gcd_{j\geq n}\{\gcd \mathcal{D}_j\}\equiv1$ for all $n\geq1$.
Then there exists a spectral set $T$ such that $\mu_{\{p_n,\mathcal{D}_n\}}$ is the restriction of the Lebesgue spectral measure on $T$.
In particular, $T$ tiles $\Bbb R$ with a tile set $\Bbb Z$, i.e., $T\oplus\Bbb Z=\Bbb R$.
\end{thm}
\begin{proof}
Theorems \ref{thm2.2} and \ref{thm5.3} tell us that $\mu_{\{p_n,\mathcal{D}_n\}}$ is an absolutely continuous spectral measure with a spectrum $\Lambda\subset\Bbb Z$.
From \cite{Dut_Lai}, we know that $\mu_{\{p_n,\mathcal{D}_n\}}$ is the restriction of the Lebesgue measure on some compact set $T=\text{spt}(\mu_{\{p_n,\mathcal{D}_n\}})$, i.e., $\mu_{\{p_n,\mathcal{D}_n\}}=C\chi_{T}dx$, where $C$ is a constant and $\chi_T$ is a characteristic function on $T$.
Due to $(p_n, \mathcal{D}_n, L_n)$ is a Hadamard triple, i.e., $\mathcal{D}_n=\{0,1,\cdots,p_n-1\}\pmod {p_n}$, we have $\mathscr{Z}(\widehat{\delta_{\mathcal{D}_n}})\supset\frac{\Bbb Z\setminus p_n\Bbb Z}{p_n}$.
This implies that
\begin{align}\label{(5.2)}
\mathscr{Z}(\widehat{\mu_{\{p_n,\mathcal{D}_n\}}})\supset\cup_{n=1}^\infty p_1\cdots p_n\frac{\Bbb Z\setminus p_n\Bbb Z}{p_n}=\Bbb Z\setminus\{0\}.
\end{align}
Hence $\Bbb Z$ is a bi-zero set of $\mu_{\{p_n,\mathcal{D}_n\}}$, which yields $\Lambda=\Bbb Z$.
It follows from \eqref{(5.2)} and Theorem 6.2 of \cite{JP92} (or Theorem 2.2 of \cite{LW97}) that $\mathcal{L}(T)=1$, where $\mathcal{L}(\cdot)$ is Lebesgue measure.
This yields that $C=1$, i.e., $\mu_{\{p_n,\mathcal{D}_n\}}$ is the restriction of the Lebesgue measure on $T$.
By Theorem 2.1 of \cite{LW97}, we obtain that $T$ is fundamental domain of $\Bbb Z$, i.e., $T\bigoplus\Bbb Z=\Bbb R$.
\end{proof}

\begin{proof}[\bf Proof of Theorem \ref{thm2.3}]
If there exists an equi-positive subsequence $\{\nu_{n_k}\}$,
from Theorems \ref{thm2.1} and \ref{thm5.4}, the support of $\mu_{\{p_n,\mathcal{D}_n\}}$ is a translational tile.
To show that $\mu_{\{p_n,\mathcal{D}_n\}}$ satisfies the no-overlap condition, we only need to prove
$$
\mu_{\{p_n,\mathcal{D}_n\}}((b+K_{>1})\cap(b'+K_{>1}))=0
$$
for any $b\neq b'\in K_1=p_1^{-1}\mathcal{D}_1$.
Applying Theorem \ref{thm5.4} to $\nu_2$,  one has
$\nu_2$ is an absolutely continuous with respect to the Lebesgue spectral measure and $\nu_2=\chi_{T}dx$, where $T$ is the support of $\nu_2$ and  $T\oplus\Bbb Z=\Bbb R$.
Note that
$$
\mu_{\{p_n,\mathcal{D}_n\}}=\delta_{p_1^{-1}\mathcal{D}_1}\ast\mu_{>1}=p_1\delta_{p_1^{-1}\mathcal{D}_1}\ast \chi_{p_1^{-1}T}dx=g(x)dx
$$
for some non-negative function $g(x)\in L^1(dx)$ whose support on $p_1^{-1}\mathcal{D}_1+p_1^{-1}T$.
According to Corollary 1.4 of \cite{Dut_Lai} and the spectrality of $\mu_{\{p_n,\mathcal{D}_n\}}$, there exists a constant $C>0$ such that $g(x)\equiv C$ on $x\in p_1^{-1}\mathcal{D}_1+p_1^{-1}T$.
Hence
$$
\int\chi_{(d+T)\cap(d'+T)}dx=0
$$
for any $d\neq d'\in \mathcal{D}_1$.
This implies that $$
\mu_{\{p_n,\mathcal{D}_n\}}((b+K_{>1})\cap(b'+K_{>1}))>0
$$
for any $b\neq b'\in p_1^{-1}\mathcal{D}_1$, i.e., $\mu_{\{p_n,\mathcal{D}_n\}}$ satisfies the no-overlap condition.

With loss of generality, we assume that the tiling of $\text{spt}(\nu_3)$ is unique(difference is a constant).
We prove the conclusion by a contradiction.
Suppose that $d>1$.
Write $\mathcal{D}_n'=\frac{1}{d}\mathcal{D}_n$ for all $n\geq2$.
Then $\gcd_{j\geq n}\{\gcd\mathcal{D}_j'\}\equiv1$ for any $n\geq2$.
It follows from Theorem \ref{thm5.4} that there exists $T_{2}\subset\Bbb R$ such that $T_{2}\oplus\Bbb Z=\Bbb R$ and
$$
\delta_{p_2^{-1}\mathcal{D}_2'}\ast\delta_{p_2^{-1}p_3^{-1}\mathcal{D}_3'}\ast\cdots=\chi_{T_{2}}dx,
$$
Hence
$$
\mu_{\{p_n,\mathcal{D}_n\}}=\delta_{p_1^{-1}\mathcal{D}_1}\ast C\chi_{p_1^{-1}dT_{2}}dx,
$$
for some $C>0$.
The no-overlap condition ensures
$$
\int\chi_{(d'+dT_{2})\cap(d''+dT_{2})}dx=0
$$
for any $d'\neq d''\in\mathcal{D}_1$.
This implies that $\frac{1}{d}\mathcal{D}_1+T_{2}$ can be written as a direct sum, that is,  $\frac{1}{d}\mathcal{D}_1+T_{2}=\frac{1}{d}\mathcal{D}_1\oplus T_{2}$.
If $\frac{1}{d}\mathcal{D}_1\oplus T_{2}$ is a tile, then there exists $0\in A\subset\Bbb R$ such that
\begin{align}\label{(5.3)}
A\oplus\frac{1}{d}\mathcal{D}_1\oplus T_{2}=\Bbb R.
\end{align}
%By Theorem 1 of \cite{LW96}, one has $A\oplus\frac{1}{d}\mathcal{D}_1=B\oplus m\Bbb Z$ for some integer $m$, i.e.,
%\begin{align}\label{(5.3)}
%T_{2}\oplus\frac{1}{d}\mathcal{D}_1\oplus A= T_{2}\oplus B\oplus m\Bbb Z=\Bbb R.
%\end{align}
%From Theorem 2 of \cite{LW96}, there exist $L$ and $\mathcal{A}=\{a_j:1\leq j\leq m\}\subset\{0,1,\cdots,L-1\}$ such that $B=\frac{m}{L}\mathcal{A}$ and $\mathcal{A}\oplus\mathcal{B}=\{0,1,\cdots,L-1\}\pmod L$ for some $\mathcal{B}\subset\Bbb Z$.
Note that $
T_2=p_2^{-1}\mathcal{D}_2'\oplus p_2^{-1}T_3,
$
where $T_3$ is the support of $\delta_{p_3^{-1}\mathcal{D}_3'}\ast\delta_{p_3^{-1}p_4^{-1}\mathcal{D}_4'}\ast\cdots$.
Then \eqref{(5.3)} can be written as
\begin{align*}
T_3\oplus \mathcal{D}_2'\oplus p_2A\oplus\frac{p_2}{d}\mathcal{D}_1=\Bbb R.
\end{align*}
It follows from Theorem \ref{thm5.4} that $T_3\oplus \Bbb Z=\Bbb R$.
As $\text{spt}(\nu_3)=dT_3$ and the tiling of $\text{spt}(\nu_3)$ is unique, $\Bbb Z$ is a unique tiling of $T_3$.
Hence $\mathcal{D}_2'\oplus p_2A\oplus\frac{p_2}{d}\mathcal{D}_1=\Bbb Z+C$ for some $C\in\Bbb R$.
Since $0\in\mathcal{D}_2'\cap \mathcal{D}_1$, one has $C\in\Bbb Z$ and $p_2A\subset\Bbb Z$, which yields $\frac{p_2}{d}\mathcal{D}_1\subset\Bbb Z$.
According to $\gcd(\gcd\mathcal{D}_1,d)=1$, we know $d|p_2$, this is impossible.
In fact, by the proof of Lemma \ref{thm3.3}, $d|\mathcal{D}_2$ and $(p_2,\mathcal{D}_2,L_2)$ forms a Hadamard triple, the common divisor of $p_2$ and $d$ is $1$.
This completes the proof.
\end{proof}

\section{\bf Non-spectrality of Cantor-Moran measures\label{sect.6}}
\setcounter{equation}{0}
In this section, we study the non-spectrality of Cantor-Moran measures.
We need to emphasize that the condition
\begin{equation}\label{(6.1*)}
\sup_n\{p_n^{-1}d:d\in\mathcal{D}_n\}<\infty
\end{equation} plays an important role in this paper.

A natural question is:
$$\text{whether Cantor-Moran measure}\;\mu_{\{p_n,D_n\}}\;\text{is spectral with} \sup_n\{p_n^{-1}d:d\in\mathcal{D}_n\}=\infty?
$$
We show \eqref{(6.1*)} is indispensable but can be improved in the following examples.

\begin{exam}\label{exam6.1}
Let $p_n\equiv9$ and $\mathcal{D}_n=\{0,2,4^n\}$ for all $n\geq1$.
Then $\mu_{\{p_n,\mathcal{D}_n\}}$ is not a spectral measure.
\end{exam}
\begin{exam}\label{exam6.2}
Let $p_n\equiv3n$ and $\mathcal{D}_n=\{0,2,3n^2+1\}$ for all $n\geq1$.
Then $\mu_{\{p_n,\mathcal{D}_n\}}$ is a spectral measure.
\end{exam}
To prove the above examples, we need some known knowledge.
We give the concept of the selection map, which is used to characterize the structure of bi-zero sets of $\mu_{\{p_n,D_n\}}$ in Example \ref{exam6.1}.

Let $\Omega=\{0,1,2\}$ and let $\Omega^\ast=\cup_{k=0}^\infty\Omega^k$ be the set of finite words (by convention $\Omega^0=\{\emptyset\}$).
Denote by $I=i_1\cdots i_k$ an element in $\Omega^k$, and $|I|=k$ is its length.
For any $I,J\in\Omega^\ast,\;IJ$ is their natural conjunction.
In particular, $\emptyset I=I$, $I0^\infty=I00\cdots$ and $0^k=0\cdots0\in\Omega^k$.
\begin{defi}\label{defi6.1}
Let $3|q$. We call a map $\phi:\Omega^\ast\rightarrow\{-1,0,\cdots,q-2\}$ a selection mapping if
\\
$(i)\;\;\;\phi(\emptyset)=\phi(0^n)=0$ for all $n\geq1$;
\\
$(ii)$\;\;for any $I=i_1\cdots i_k\in\Omega^k,\phi(I)\in i_k+3\mathbb{Z}\cap \mathcal{C}$, where $\mathcal{C}=\{-1,0,\cdots,q-2\}$;
\\
$(iii)$\;for any $I=i_1\cdots i_k\in\Omega^\ast$,
there exists $J\in\Omega^\ast$ such that $\phi$ vanishes eventually on $IJ0^\infty$, $i.e.$, $\phi(IJ0^k)=0$ for sufficient large $k$.
\end{defi}

Let
$$
\Omega^\phi=\{I=i_1\cdots i_k\in\Omega^\ast:i_k\neq0,\;\phi(I0^n)=0\;\text{for\;sufficient\;large\;}n\}\cup\{\emptyset\},
$$
and let
$$
\phi^\ast(I)=\sum_{n=1}^\infty\phi(I0^\infty|_n)q^{n-1},\quad I\in\Omega^\phi.
$$
Here $I0^\infty|_n$ denotes the word of the first $n$ entries.

Dai, He and Lau \cite{DHL14} showed the following theorem.
\begin{thm}[\cite{DHL14}]\label{thm6.2}
Let $\rho=p/q\in(0,1)$ with $p,q\in\Bbb Z$, $3|q$ and $\gcd(p,q)=1$.
Suppose that $0\in\Lambda\subset\Bbb R$ is a countable set satisfying
$$(\Lambda-\Lambda)\setminus\{0\}\subset\cup_{j=1}^\infty\rho^{-j}(\pm\frac{1}{3}+\mathbb{Z}).$$
Then $\Lambda$ is maximal, i.e.,
$$(\Lambda\cup\{x\}-\Lambda\cup\{x\})\setminus\{0\}\not\subset\cup_{j=1}^\infty\rho^{-j}(\pm\frac{1}{3}+\mathbb{Z})
$$ for any $x\in\Bbb R\setminus\Lambda$,  if and only if there exist $m_0\geq1$ and a selection map $\phi$ such that $\Lambda=\rho^{-m_0}3^{-1}(\phi^\ast(\Omega^\phi))$.
\end{thm}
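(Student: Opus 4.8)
The plan is to read the hypothesis as an orthogonality statement. Writing $\Omega=\{0,1,2\}$ and $M(\xi)=\frac13\bigl(1+e^{2\pi i\xi}+e^{4\pi i\xi}\bigr)$, let $\mu_\rho$ be the self-similar measure with $\widehat{\mu_\rho}(\xi)=\prod_{j=1}^\infty M(\rho^{\,j}\xi)$. Since $M(\xi)=0$ exactly when $\xi\in\pm\frac13+\mathbb{Z}$, we have $\mathscr{Z}(\widehat{\mu_\rho})=\bigcup_{j\geq1}\rho^{-j}(\pm\frac13+\mathbb{Z})$, so the standing hypothesis says precisely that $\Lambda$ is a bi-zero set of $\mu_\rho$; the theorem is thus a structural description of the maximal ones. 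The guiding idea is that every admissible $\lambda$ is coded by a path in the ternary tree $\Omega^\ast$: writing $3p^{\,j}\lambda=q^{\,j}t$ with $3\nmid t$ records at level $j$ one of the two alternatives $\pm\frac13$ together with a $q$-adic digit, and the map $\phi^\ast$ is exactly the inverse of this reading.

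For the sufficiency, fix $m_0$ and a selection map $\phi$ and set $\Lambda=\rho^{-m_0}3^{-1}\phi^\ast(\Omega^\phi)$. Each $\phi^\ast(I)$ is a genuine integer because membership $I\in\Omega^\phi$ forces $\phi(I0^n)=0$ for all large $n$, so the defining series terminates. To check the bi-zero property, take $I\neq I'$ in $\Omega^\phi$ and let $n_0$ be the first index at which the infinite words $I0^\infty$ and $I'0^\infty$ differ; since $\phi$ is determined by the word and $3\mid q$, condition (ii) gives $\phi^\ast(I)-\phi^\ast(I')=q^{\,n_0-1}s$ with $3\nmid s$, and then $\gcd(p,q)=1$ yields $3\rho^{\,m_0+n_0-1}(\lambda-\lambda')=p^{\,n_0-1}s\in\mathbb{Z}\setminus3\mathbb{Z}$, placing $\lambda-\lambda'$ in $\rho^{-(m_0+n_0-1)}(\pm\frac13+\mathbb{Z})$. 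Maximality then amounts to completeness of the coding tree: if $x\notin\Lambda$ could be adjoined, then $x$ would define an admissible path compatible with every node of $\Omega^\phi$, and condition (iii) (an eventually vanishing continuation is always available) forces that path already to belong to $\Omega^\phi$, i.e. $x\in\Lambda$.

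For the necessity, begin with a maximal bi-zero set $\Lambda\ni0$. The first and most delicate step is to produce a single scale: I would show there is $m_0\geq1$ with $3\rho^{m_0}\Lambda\subset\mathbb{Z}$, so that $n_\lambda:=3\rho^{m_0}\lambda$ is an integer whose $3$-adic valuation records the level $j_\lambda=m_0+v_3(n_\lambda)$. This uses $\gcd(p,q)=1$, $3\mid q$, and the bi-zero constraints, which force the $p$-adic scales of distinct frequencies to align; the orthogonality bound $Q_\Lambda\leq1$ of Theorem \ref{thm6.1} rules out the unbounded branching that would otherwise destroy a common denominator. Expanding each $n_\lambda$ in base $q$ and defining $\phi$ by the digit produced at each node, conditions (i) and (ii) are inherited from $0\in\Lambda$ and from membership of the differences in $\pm\frac13+\mathbb{Z}$; condition (iii) is precisely the transcription of maximality, since a node admitting no eventually vanishing continuation would let us adjoin a further frequency to $\Lambda$. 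This yields $\phi$ with $\Lambda=\rho^{-m_0}3^{-1}\phi^\ast(\Omega^\phi)$.

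The main obstacle is the necessity direction, and inside it the existence of the uniform scale $m_0$ together with the equivalence between maximality and the eventual-vanishing condition (iii). Controlling the common denominator requires genuinely combining the coprimality $\gcd(p,q)=1$, the divisibility $3\mid q$, and the orthogonality bound to exclude paths that branch forever without stabilizing; once a single $m_0$ is secured, the remaining correspondence between maximal bi-zero sets and selection maps is a bookkeeping argument on $\Omega^\ast$.
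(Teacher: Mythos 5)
First, a caveat about the comparison itself: the paper does not prove this statement --- it is quoted verbatim from \cite{DHL14} --- so your proposal can only be measured against the argument in that reference. Its tree-coding strategy you have reconstructed correctly in outline: reading the hypothesis as $(\Lambda-\Lambda)\setminus\{0\}\subset\mathscr{Z}(\widehat{\mu_\rho})$ for the self-similar measure with mask $M(\xi)=\frac13(1+e^{2\pi i\xi}+e^{4\pi i\xi})$, and coding frequencies by labelled paths in the ternary tree, is exactly the right frame. Your verification of the bi-zero property in the sufficiency direction is also correct and complete: $\phi^\ast(I)-\phi^\ast(I')=q^{n_0-1}s$ with $3\nmid s$ (condition (ii) plus $3\mid q$), hence $3\rho^{m_0+n_0-1}(\lambda-\lambda')=p^{n_0-1}s\in\mathbb{Z}\setminus3\mathbb{Z}$ since $\gcd(p,q)=1$ forces $3\nmid p$.

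Nevertheless, as a proof the proposal has a genuine gap, and you in effect locate it yourself: everything that makes the theorem nontrivial is deferred. (1) The uniform scale in the necessity direction is only announced (``I would show there is $m_0$ with $3\rho^{m_0}\Lambda\subset\mathbb{Z}$''), and the one mechanism you offer --- the bound $Q_\Lambda\leq1$ of Theorem \ref{thm6.1} ``ruling out unbounded branching'' --- cannot do this job: $Q_\Lambda\leq1$ holds automatically for \emph{every} bi-zero set and carries no arithmetic information about denominators. What is actually required is valuation bookkeeping on differences: a generic element has the form $\lambda=\rho^{-j}3^{-1}t$ with denominator $p^{j}$, and one must show that mismatched $p$-powers force some difference out of $\cup_{j\geq1}\rho^{-j}(\pm\frac13+\mathbb{Z})$. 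For instance $\{0,\frac{q}{3p},\frac{q^2}{3p^2}\}$ is already not a bi-zero set when $p>1$, because $\frac{q(q-p)}{3p^2}$ lies in the zero set only if $p\mid q-p$; none of this analysis appears in your text. (2) Your bookkeeping device is moreover wrong in general: the level $j_\lambda$ is not recorded by $v_3(n_\lambda)$ unless $v_3(q)=1$ (take $q=9$); it is the position of the first nonzero base-$q$ digit that matters. (3) In the sufficiency direction, maximality is dispatched in one sentence (``completeness of the coding tree''), but showing that an adjoinable $x\notin\Lambda$ must satisfy $3\rho^{m_0}x\in\mathbb{Z}$ on the \emph{same} scale, must follow $\phi$ at every node (this is where (iii) enters, guaranteeing deep witnesses in $\Lambda$ below each node), and must have an eventually vanishing digit path, is the same arithmetic difficulty as (1); calling the remainder ``a bookkeeping argument'' undersells precisely the part that has to be written. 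In short: the skeleton matches the actual proof in \cite{DHL14}, the sufficiency orthogonality computation is sound, but the load-bearing lemmas --- common scale, and the two maximality equivalences --- are stated as intentions rather than proved, and the one concrete tool proposed for them would not work.
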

\begin{proof}[\bf Proof of Example \ref{exam6.1}]
Suppose that $\mu_{\{p_n,\mathcal{D}_n\}}$ is a spectral measure with a spectrum $\Lambda$.
The property of the bi-zero set gives that
$$
2(\Lambda-\Lambda)\setminus\{0\}\subset \mathscr{Z}(\widehat{\mu_{\{p_n,\mathcal{D}_n\}}})=\cup_{j=1}^\infty 9^j(\pm\frac{1}{3}+\mathbb{Z}).
$$
Theorem \ref{thm6.2} gives a selection mapping $\phi$ and a subset $\Omega'\subset\Omega^\phi$ such that
$$
\Lambda=6^{-1}9^{m_0}\phi^*(\Omega')
$$
for some $m_0\geq1$.
Write $\mu_{\{p_n,\mathcal{D}_n\}}=\mu_n\ast\mu_{>n}$, where
$$
\mu_n:=\delta_{p_1^{-1}\mathcal{D}_1}\ast\delta_{(p_2p_1)^{-1}\mathcal{D}_2}\ast\cdots \ast\delta_{(p_1\cdots p_n)^{-1}\mathcal{D}_n}.
$$
The following claims are needed.
\\
\textbf{Claim\;1:}There exist $C,\beta>0$ such that
$$
\prod_{j=1}^\infty\frac{1}{3}\big|1+e^{2\pi i\frac{2y}{9^{j}}}+e^{2\pi i(\frac{4}{9})^jx}\big|\leq C(\ln |x|)^{-\beta},\;|x|>1,\;y\in\Bbb R.
$$
Let integer $b$ satisfy $b\geq2\beta^{-1}$, and $\mathbf{I}_n=\{\mathbf{i}\in\Omega':|\mathbf{i}|\leq n^b\}$.
\\
\textbf{Claim\;2:} $$
\sum_{\mathbf{i}\in \mathbf{I}_n}|\widehat{\mu_{n^b+m_0-1}}(\xi+6^{-1}9^{m_0}\phi^*(\mathbf{i}))|^2\leq 1
$$
for any $n\geq1$.

In the following, we use the above claims to prove our conclusion.
Let $Q_n(\xi):=\sum_{\mathbf{i}\in \mathbf{I}_n}|\widehat{\mu}(\xi+6^{-1}9^{m_0}\phi^*(\mathbf{i}))|^2.$
For any $\mathbf{i}\in \mathbf{I}_{n+1}\setminus \mathbf{I}_n$ and $|\xi|<\frac{1}{2}$, by \textbf{Claim\;1} and
$$
|\xi+6^{-1}9^{m_0}\phi^*(\mathbf{i})|\geq 6^{-1}9^{m_0}(9^{n^b}-7\times 9^{n^b-1}-\cdots)\geq 6^{-1}9^{n^b+m_0-1},\;\;\xi\in(0,1)
$$
there exist $n_0$ and some constants $C,C'>0$ such that
\begin{align*}
|\widehat{\mu_{>(n+1)^b+m_0-1}}(\xi+\lambda(\mathbf{i}))|&=\prod_{j=1}^\infty\frac{1}{3}\left|1+e^{2\pi i\frac{2(\xi+6^{-1}9^{m_0}\phi^*(\mathbf{i}))}{9^{(n+1)^b+j+m_0-1}}}+e^{2\pi i(\frac{4}{9})^{(n+1)^b+j+m_0-1}(\xi+6^{-1}9^{m_0}\phi^*(\mathbf{i}))}\right|
\\
&\leq C(\ln|6^{-1}4^{(n+1)^b}9^{n^b-(n+1)^b}|)^{-\beta}
\\
&\leq C' n^{-\frac{1}{2}b \beta}\leq C' n^{-1}
,
\end{align*}
for any $\mathbf{i}\in \mathbf{I}_{n+1}\setminus \mathbf{I}_n$ and $n\geq n_0$.
According to the above estimate and \textbf{Claim\;2}, one has
\begin{align*}
Q_{n+1}(\xi)&=Q_n(\xi)+\sum_{I\in \mathbf{I}_{n+1}\setminus \mathbf{I}_n}|\widehat{\mu_{\{p_n,\mathcal{D}_n\}}}(\xi+6^{-1}9^{m_0}\phi^*(\mathbf{i}))|^2
\\
&\leq Q_n(\xi)+\frac{C'^2}{n^2}\sum_{\mathbf{i}\in \mathbf{I}_{n+1}\setminus\mathbf{I}_n}|\widehat{\mu_{(n+1)^b+m_0-1}}(\xi+6^{-1}9^{m_0}\phi^*(\mathbf{i}))|^2
\\
&\leq Q_n(\xi)+\frac{C'^2}{n^2}(1-Q_n(\xi)).
\end{align*}
Therefore,
$$
1-Q_{n+1}(\xi)\geq(1-Q_n(\xi))(1-\frac{C'^2}{n^2})\geq(1-Q_{n_0}(\xi))\prod_{j=n_0}^n(1-\frac{C'^2}{j^2})
$$
for all $n\geq n_0$.
Letting $n\rightarrow\infty$, then
$$
1-Q(\xi)\geq(1-Q_{n_0}(\xi))\prod_{j=n_0}^\infty(1-\frac{C'^2}{j^2}).
$$
From $\prod_{j=n_0}^\infty(1-\frac{C'^2}{j^2})\neq0$, we obtain that $Q(\xi)\not\equiv1$.
By Theorem \ref{thm6.1}, $\Lambda$ is not a spectrum of $\mu_{\{p_n,\mathcal{D}_n\}}$, a contradiction.

Finally, we complete our proof by showing \textbf{Claims 1,2}.
We first prove \textbf{Claim 1}.
Define that $|\langle x\rangle|:=|x_1|$, where $x=x_1+x_2$ with $x_2\in\mathbb{Z}$ and $x_1\in[-\frac{1}{2},\frac{1}{2})$.
Let
$$
b_0:=\max_{|\langle x\rangle|\geq\frac{1}{18}}\{\frac{1}{3}\big|1+e^{2\pi ix}+e^{2\pi iy}\big|\}\leq\frac{1}{3}\big|1+|1+e^{2\pi i\frac{1}{18}}|\big|<1.
$$
Then
\begin{equation}\label{(6.1)}
\prod_{j=1}^\infty\frac{1}{3}\left|1+e^{2\pi i\frac{2y}{9^{j}}}+e^{2\pi i(\frac{4}{9})^jx}\right|\leq b_0^{l},
\end{equation}
where $l=\#\{j:|\langle (\frac{4}{9})^jx\rangle|\geq\frac{1}{18}\}$.
Put
$$
\left\{j:\left|\left\langle\left(\frac{4}{9}\right)^jx\right\rangle\right|\geq\frac{1}{18}\right\}=\{j_1,j_2,\cdots, j_l\}
$$
with $j_1<j_2<\cdots<j_l$ and $|(\frac{4}{9})^{j_l+1}x|\leq1$.
For any $|(\frac{4}{9})^jx|>1$, we can write
$$
\left(\frac{4}{9}\right)^jx=9^{s_j}(\mathbb{Z}\setminus9\mathbb{Z})+\epsilon\in 9^{s_j}(\mathbb{Z}\setminus9\mathbb{Z})+\left[-\frac{1}{2},\frac{1}{2}\right),
$$
which implies that
\begin{equation}\label{(6.2)}
\left|\left(\frac{4}{9}\right)^{j-1}x\right|\geq 9^{s_j}.
\end{equation}
If $|\langle(\frac{4}{9})^jx\rangle|<\frac{1}{18}$,
then $|\langle(\frac{4}{9})^{j+s_j+1}x\rangle|\geq\frac{1}{9}-\frac{1}{18}=\frac{1}{18}$.
Without loss of generality, assuming $j_{i+1}\geq j_i+2$, then we have $|\langle(\frac{4}{9})^{j_i+1}x\rangle|<\frac{1}{18}$.
From \eqref{(6.2)}, a simple calculation gives
$$
\left|\left(\frac{4}{9}\right)^{j_{i+1}}x\right|\geq \left|\left(\frac{4}{9}\right)^{j_{i}+s_{j_i+1}+2}x\right|\geq\left(\frac{4}{9}\right)^2\left|\left(\frac{4}{9}\right)^{j_i}x\right|^{\frac{\ln4}{\ln9}}.
$$
Hence
\begin{equation}\label{(6.3)}
\frac{9}{4}\geq\left|\left(\frac{4}{9}\right)^{j_{l}}x\right|\geq \left(\frac{4}{9}\right)^{2}\left|\left(\frac{4}{9}\right)^{j_{l-1}}x\right|^{\frac{\ln4}{\ln9}}\geq\cdots\geq9^{-2}|x|^{(\frac{\ln4}{\ln9})^l}.
\end{equation}
Let $\alpha:=\frac{\ln4}{\ln9}$ and $\beta:=\frac{\ln b_0}{\ln \alpha}$.
By combining \eqref{(6.1)} with \eqref{(6.3)}, we deduce
$$
\prod_{j=1}^\infty\frac{1}{3}\big|1+e^{2\pi i\frac{2y}{9^{j}}}+e^{2\pi i(\frac{4}{q})^jx}\big|\leq b_0^{l}\leq \alpha^{l\beta}\leq \left(\ln\frac{9^3}{4}\right)^{\beta}(\ln|x|)^{-\beta},
$$
which completes the proof of \textbf{Claim\;1}.

According to the definition of $\phi^*$, we obtain that
$$
6^{-1}9^{m_0}\phi^*(\mathbf{i})-6^{-1}9^{m_0}\phi^*(\mathbf{j})\in\mathscr{Z}(\widehat{\delta_{(p_1\cdots p_{t})^{-1}D_t}})\subset\mathscr{Z}(\widehat{\mu_{n+m_0-1}}),
$$
where $t=m_0-1+\min\{j:\mathbf{i}|_j\neq\mathbf{j}|_j\}$.
Then $\{6^{-1}9^{m_0}\phi^*(\mathbf{i})\}_{\mathbf{i}\in \mathbf{I}_n}$ is a bi-zero set of $\mu_{n+m_0-1}$.
Then Theorem \ref{thm6.1} gives \textbf{Claim\;2}, which completes the proof.
\end{proof}

\begin{proof}[\bf Proof of Example \ref{exam6.2}]
Write
$$
\mu_{\{p_n,\mathcal{D}_n\}}:=\mu_n\ast\mu_{>n},
$$
where $\mu_n=\delta_{p_1^{-1}\mathcal{D}_1}\ast\cdots\delta_{(p_1\cdots p_n)^{-1}\mathcal{D}_n}$.
Let $\Lambda_n=\sum_{j=1}^n 3^jj!\{0,\pm\frac{1}{3}\}$ and $\Lambda=\cup_{n=1}^\infty\Lambda_n$.
For any $\lambda\neq\lambda'\in\Lambda_n$,
we know that
$$
\lambda-\lambda'=\sum_{j=1}^n3^jj!(w_j-w_j')=\sum_{j=j_0}^n3^jj!(w_j-w_j')\in 3^{j_0}j_0!(\pm\frac13+\Bbb Z)
\subset\mathscr{Z}(\widehat{\delta_{p_1^{-1}\cdots p_{j_0}^{-1}\mathcal{D}_{j_0}}})\subset\mathscr{Z}(\widehat{\mu_n}),
$$
where $w_j,w_j'\in\{\pm\frac13\}$ and $j_0:=\min\{j:w_j\neq w_j'\}\leq n$.
Hence $\Lambda_n$ is a bi-zero set of $\mu_n$. Note that $\#\Lambda_n=\#\text{spt}\mu_n$.
Then $\{e^{2\pi i\lambda x}:\lambda\in\Lambda_n$ is complete in $L^2(\mu_n)$, i.e., $\Lambda_n$ is a spectrum of $\mu_n$.

For any $\lambda=\sum_{j=1}^n 3^jj!w_j$ with $w_j\in\{0,\pm\frac{1}{3}\}$, we have
$$
|x+\lambda|\leq \frac{1}{3}(3^nn!+3^{n-1}(n-1)!+\cdots+1)\leq\frac{3^nn!}{2}
$$
for any $|x|<\frac{1}{3}$.
Note that
$$
\prod_{j=1}^3\frac{1}{3}\left|1+e^{2\pi i\frac{2(x+\lambda)}{3^{n+j}(n+j)!}}+e^{2\pi i(\frac{(3(n+j)^2+1)(x+\lambda)}{3^{n+j}(n+j)!})}\right|\geq\varepsilon_0^3,
$$
where
$\varepsilon_0:=\min_{|x|<\frac{1}{6}}\frac{1}{3}|1+e^{2\pi ix}+e^{2\pi iy}|.$
Similar to Lemma \ref{thm3.9}, we have
$$
|\widehat{\mu_{>n}}(x+\lambda)|\geq\prod_{j=1}^\infty\frac{1}{3}\left|1+e^{2\pi i\frac{2(x+\lambda)}{3^{n+j}(n+j)!}}+e^{2\pi i(\frac{(3(n+j)^2+1)(x+\lambda)}{3^{n+j}(n+j)!})}\right|\geq \varepsilon_0^3\prod_{j=1}^\infty\cos\frac{1}{3^{j+3}}>0.
$$
Hence the example follows from Theorem 2.3(i) in \cite{AHH19}.
\end{proof}

\bigskip

\bigskip

\noindent {\bf Funding}.
The first author was supported by NSFC (Grant No. 11925107, 12226334), National Key R\&D Program of China (Grant No. 2021YFA1003100) and Key Research Program of Frontier Sciences, CAS (Grant No. ZDBS-LY-7002), and the second author was supported in part by the NNSF of China (Nos. 11831007 and 12071125), China Postdoctoral Science Foundation (No. 2022TQ0358 and 2022M723330).

\bigskip

\end{document}